\newtheorem{thm}{\sc theorem}[section]
\newtheorem{dfn}[thm]{\sc definition}
\newtheorem{pro}[thm]{\sc proposition}
\newtheorem{lem}[thm]{\sc lemma}
\newtheorem{cor}[thm]{\sc corollary}
\newtheorem*{TA}{\sc theorem A}
\newtheorem*{TC}{\sc theorem C}
\newtheorem*{TB}{\sc theorem B}
\newcommand{\F}{\mbox{$\mathcal F$}}
\newcommand{\X}{\mbox{$\mathcal X$}}
\newcommand{\rank}{\mbox{$\mbox{\rm rank}$}}
\def\Z{{\bf Z}}
\def\Q{{\bf Q}}
\def\R{{\bf R}}
\def\S{{\bf S}}
\def\D{{\bf D}}
\def\mun{{^{-1}}}
\def\pr{{\rm pr}}
\def\demi{{2\mun}}
\begin{document}

\title{Realizing compactly generated pseudo-groups of dimension one}
\begin{abstract} Many compactly generated pseudo-groups of local
transformations on $1$-manifolds are realizable as the transverse dynamic of a foliation
of codimension $1$ on a compact
manifold of dimension $3$ or $4$.
\end{abstract}
\keywords{Pseudo-group, compactly generated, foliation, taut, holonomy pseudo-group,
realization of pseudo-groups}

\maketitle

\begin{center}
Ga\"{e}l Meigniez
\footnote{Partially supported by the Japan Society for Promoting Science,
No.\ L03514}
\end{center}
\date{\today}

After C. Ehresmann,
\cite{ehr1954},
 given a foliation $\F$ of codimension $q$ on a
compact
manifold $M$~, its transverse dynamic
 is represented by its \emph{holonomy
pseudo-group} of local transformations on any exhaustive transversal $T$~.
The inverse problem has been raised by A. Haefliger: given a pseudo-group
 of local transformations of some manifold of dimension $q$~,
realize it, if possible, as the dynamic of some foliation
of codimension $q$ on some compact manifold.
The difficulty here lies in the compactness.
More precisely, Haefliger discovered a necessary condition: the
pseudo-group must be \emph{compactly generated} \cite{hae1985}\cite{hae2002}.
 He asked if this
condition is sufficient.

The present paper intends to study the case $q=1$~.

A counterexample is known: there exists a compactly generated pseudo-group of
local transformations of the line, which is not
realizable. It contains a \emph{paradoxical Reeb component:} a
full subpseudo-group equivalent to the holonomy of a Reeb component,
but whose boundary orbit has some complicated isotropy group on the exterior side
\cite{mei2010}.

The object of the
 present paper is, on the contrary, to give a \emph{positive} answer to Haefliger's
question for many pseudo-groups of dimension one.

 Recall that a codimension $1$ foliation is (topologically)
\emph{taut} if through every point there passes a transverse loop, or a transverse
path with extremities on $\partial M$ (we refer e.g. to \cite{can2000} for the elements on foliations)
. Equivalently, the foliation has no \emph{dead end component.} These notions are easily translated for
pseudo-groups: one has the notions of tautness
and of dead end components for a pseudo-group of dimension $1$ (see paragraph
\ref{tautness_sbs} below). For example, every pseudo-group of dimension $1$
without closed orbit is taut.
It turns out that, to realize a given compactly generated pseudo-group
 of dimension $1$,
the extra necessary and/or sufficient conditions that we find, bear on the isotropy groups
of the closed orbits bounding the dead end components, if any.

We also pay attention to the dimension of the realization. Of course,
 a pseudo-group which is realized by some foliation $\F$ on some manifold $M$~,
 is also realized by the pullback of $\F$ into $M\times\S^1$~. One can
ask to realize a pseudo-group, if possible, in the smallest possible dimension.
The dynamics of the foliations on surfaces being very restrictive,
the dimension $3$ will be in general the first candidate.

There is a well-known constraint specific to dimension $3$ in the nontaut case. Namely,
remember that for elementary
Euler characteristic reasons, in every compact foliated $3$-manifold
which is not taut, every leaf bounding a dead end component is a $2$-torus or an annulus 
(S. Goodman) \cite{can2000}.
 This phenomenon has a counterpart
 in the holonomy pseudo-group: for every
 orbit bounding a dead end component, its isotropy group is commutative of
 rank at most two.

\medbreak
A few precisions must be given before the results.

Equivalence: the pseudo-groups must be considered up to an equivalence
called \emph{Haefliger equivalence}. Given
 two different
exhaustive transversals for a same foliated manifold, the two holonomy pseudo-groups are Haefliger-equivalent \cite{hae1985}\cite{hae1988}\cite{hae2002}.
 A foliated manifold
is said to \emph{realize} a
pseudo-group $G$ if its holonomy pseudo-group on any exhaustive transversal
is Haefliger-equivalent to $G$~.

Differentiability: The given pseudo-group being of class $C^r$~, $0\le r\le\infty$~,
the realizing foliation will be $C^{\infty, r}$, that is, globally $C^r$ and tangentially smooth \cite{can2000}.
We also consider the  pseudo-groups of class $PL$~:
the realizing foliations will be $C^{\infty, PL}$.

Orientation:
for simplicity, all pseudo-groups are understood  \emph{orientable,} that is, orientation-preserving.
All foliations are understood tangentially orientable and transversely orientable.

Boundaries:
by a "foliated manifold", we understand a manifold $M$ with a smooth boundary
(maybe empty), endowed with a foliation $\F$ such that each
connected
component of $\partial M$ is either a leaf of $\F$ or transverse to $\F$~.
 So, $\partial M$ splits into a
tangential boundary $\partial_\parallel M$~, which is seen in the holonomy pseudo-group,
and a transverse boundary $\partial_\pitchfork M$~, which is not.
However, in the realization problem,
the choice of allowing a transverse boundary or not, only affects the dimension of the realization. For,
if $G$ is realized by some foliation $\F$ on some manifold $M$~, with some transverse boundary components, then it is also realized, without transverse boundary components,  by the pullback of $\F$ in
a manifold of one more dimension,
namely:
$$(M\times\S^1)\cup_{(\partial_\pitchfork M\times\S^1)}
(\partial_\pitchfork M\times
\D^2)$$

\begin{TA} Every
pseudo-group of dimension $1$ which is \emph{compactly generated}
and \emph{taut},
is realized by some foliated compact $3$-manifold, without transverse boundary.
\end{TA}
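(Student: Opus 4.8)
The plan is to realize $G$ in two stages: first a soft realization carrying transverse boundary, obtained by suspending a finite set of generators, and then the removal of that transverse boundary within dimension three, which is where tautness does the real work.

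First I would use compact generation to replace $G$, up to Haefliger equivalence, by a pseudo-group on a compact $1$-manifold $T$ (a finite disjoint union of compact intervals) generated by finitely many elements $\gamma_1,\dots,\gamma_n$, each $\gamma_j$ being the restriction to a relatively compact domain of a local transformation defined on a neighborhood of the closure of that domain. This is exactly the data a compact construction can digest: the room to spare around each domain is what lets the pieces be glued along honest product collars. I would then assemble a compact foliated $3$-manifold $M$ realizing $G$ as a \emph{partial suspension}. Take a trivially foliated central block $P_0=\Sigma\times T$, with $\Sigma$ a compact surface, whose holonomy pseudo-group is the identity on $T$; then, for each generator $\gamma_j$, glue a \emph{flange} $P_j$---a foliated band whose two attaching regions are joined through its leaves by exactly the transformation $\gamma_j$---onto $P_0$ along the part of $T$ where $\gamma_j$ is defined. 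Because each $\gamma_j$ extends past the closure of its domain, these gluings occur along genuine product regions, and the result is a compact foliated $3$-manifold whose holonomy pseudo-group on $T$ is Haefliger-equivalent to $G$.

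By construction $M$ has transverse boundary $\partial_\pitchfork M$ (the free sides of $P_0$ and of the flanges) and possibly tangential boundary $\partial_\parallel M$. As long as $\partial_\pitchfork M$ is present, $M$ is trivially taut: every point can be joined to $\partial_\pitchfork M$ by a short transverse path, so there are no dead-end components at this stage. The decisive step, and the main obstacle, is to remove $\partial_\pitchfork M$ \emph{without leaving dimension three}. Going up one dimension trivializes this (as recalled before Theorem A), but the content of the theorem is to stay in dimension $3$. Each component of $\partial_\pitchfork M$ is a surface transverse to $\F$, and I would cap it by gluing in a foliated piece whose own holonomy contributes nothing new, so that the Haefliger class is preserved.

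The danger in this capping is that the naive device---spinning the foliation near the transverse boundary to make it tangential and then filling---creates a Reeb-type component, hence a dead-end, which is precisely the non-taut phenomenon constrained in dimension three by Goodman's torus/annulus dichotomy and realized by the paradoxical Reeb component of the known counterexample. Here tautness of $G$ is exactly what removes the obstruction: the transverse loops, and the transverse paths ending on $\partial_\parallel$, that pass through every orbit provide the routes along which the leaves can be closed up across each cap, letting me fill every component of $\partial_\pitchfork M$ by a taut piece while keeping the whole manifold taut, compact, three-dimensional, and with holonomy still equivalent to $G$. Controlling this filling so that it neither alters the Haefliger class nor reintroduces a dead-end is the crux of the argument; the remaining bookkeeping, matching the $C^{\infty,r}$ structures and disposing of any leftover tangential boundary by gluing along leaves, is routine by comparison.
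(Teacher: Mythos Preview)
Your plan has a genuine gap at the step you yourself call ``the crux''. You propose to cap each transverse boundary component by gluing in a foliated piece ``whose own holonomy contributes nothing new'', and you assert that tautness ``provides the routes along which the leaves can be closed up across each cap''. But this is not a construction: each component of $\partial_\pitchfork M$ is a closed surface carrying a $1$-dimensional foliation, and you have not said what foliated $3$-manifold fills it, how it is foliated, or why the glued result has the same holonomy pseudo-group. The transverse loops that tautness provides live in the piece you have already built, not in the cap you are trying to manufacture; a mechanism for transporting that information into the cap is entirely missing.

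There is a second gap. You invoke compact generation only once, to obtain finitely many extendable generators in the initial setup; after that it plays no role in your argument. But extendability must do further work at the capping stage. In the paper's proof it is precisely the existence of $\bar g_i\in G$ defined past the closure of $Dom(g_i)$ that produces, at each Morse singularity, a \emph{Levitt loop}: a tangential loop through the singularity whose one-sided holonomy germ is trivial. This is the geometric content of compact generation, and without an analogue of it your capping cannot be shown to preserve the Haefliger class.

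For comparison, the paper never creates transverse boundary at all. It starts from $\S^2\times T$ and installs each generator by an index-$1$ surgery, obtaining a \emph{Morse-singular} foliation on a closed $3$-manifold, with one index-$1$ and one index-$2$ singularity per generator. Compact generation then yields a Levitt loop at each singularity; tautness yields a positively transverse arc $p_i$ from each index-$2$ singularity $s'_i$ to its partner index-$1$ singularity $s_i$; and the Levitt loops let one push $p_i$ to a second, $\F$-equivalent transverse arc $q_i$ entering the singular cones through the opposite nappes. An elementary index-$2$ surgery along the embedded circle $p_i\cup q_i$ then cancels the pair $s_i,s'_i$ without changing the holonomy on $T$. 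This surgery is the concrete substitute for your unspecified capping; if you want to rescue your route you must supply something equally explicit, and show exactly where both tautness and extendability enter it.

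(A minor point: the reduction to a \emph{compact} $T$ already uses tautness, not just compact generation---see Proposition~1.8---and $T$ will in general be a disjoint union of circles and intervals, not intervals alone.)
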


Essentially,  our method is that the pseudo-group is first easily
realized as the dynamic of a Morse-singular foliation on a compact $3$-manifold.
The singularities are of Morse indices $1$ and $2$~, in equal number.
Then, thanks to tautness, from every singularity of index $2$ there is a
positively transverse path to some (distant) singularity of index $1$~. Thanks
  to some geometric manifestation of
   compact generation, the pair is cancelled, \emph{not} in Morse's way, but rather
by the means of an elementary surgery of index $2$
performed on the manifold, without changing the dynamic of the foliation.

An analogous construction can also be made inside a given foliated manifold; this leads
to the following dimension reduction.

\begin{pro}\label{reduction_pro} Let $(M,\F)$ be a compact manifold of dimension
$n\ge 4$, endowed with a foliation of codimension $1$ which is
topologically taut.

Then, there is a proper compact
 submanifold $M'\subset M$ of dimension $n-1$
transverse to $\F$, such that:
\begin{itemize}
\item Every leaf of $\F$ meets $M'$;
\item Every two points of $M'$ which lie on the same leaf of $\F$,
 lie on the same leaf of $\F\vert M'$.
\end{itemize}
\end{pro}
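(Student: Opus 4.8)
The plan is to produce $M'$ as a transverse hypersurface whose induced foliation $\F\vert M'$ has the \emph{same} holonomy pseudo-group as $\F$, and to read the two bulleted conditions as the two halves of this statement. I would fix a $1$--dimensional transversal $T$ carried by the candidate $M'$; completeness (every leaf meets $M'$) guarantees that $T$ is also exhaustive for $\F$, and then, granting completeness, the second bullet is \emph{equivalent} to the equality $\mathrm{Hol}(\F\vert M')=\mathrm{Hol}(\F)$ read on $T$. Indeed, if a leaf $L$ met $M'$ along two distinct leaves $S_1,S_2$ of $\F\vert M'$, I would pick $x\in S_1\cap T$ and $y\in S_2\cap T$; they carry a holonomy germ of $\F$ which, by the equality, is represented by a path inside a single leaf of $\F\vert M'$, forcing $S_1=S_2$. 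So it suffices to build a complete, proper, transverse $M'$ capturing all of $\mathrm{Hol}(\F)$.

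First I would manufacture a complete proper transverse hypersurface $W$. Transverse orientability gives a nowhere-zero vector field $Z$ transverse to $\F$ (the normal line bundle is trivial), which may be arranged tangent to $\partial_\pitchfork M$; covering $M$ by finitely many foliated boxes and taking in each a $Z$--parallel ``vertical wall'', one assembles, after a harmless general-position adjustment (legitimate since $\dim M-1\ge 3$), an embedded compact hypersurface $W$ transverse to $\F$ that every leaf crosses. Because leaves of $\F\vert W$ sit inside leaves of $\F$, one always has the inclusion $\mathrm{Hol}(\F\vert W)\subseteq\mathrm{Hol}(\F)$; the defect to be killed is exactly the holonomy of $\F$ not yet realized along $W$, detected by the leaves $L$ for which $L\cap W$ is disconnected.

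Next I would kill this defect by finitely many \emph{sheet--merging surgeries} performed on $W$ inside $M$, leaving $M$ and $\F$ untouched. Since $M$ is compact, $\mathrm{Hol}(\F)$ is compactly generated, so it is enough to capture finitely many generators $g_1,\dots,g_m$; each $g_k$ is the holonomy along a leaf path $\ell_k$ joining two sheets $S_1^k,S_2^k$ of $W$ lying on a single leaf. For a generator not yet captured I would attach to $W$ an embedded transverse $1$--handle, a tube $S^{n-2}\times[0,1]$ running alongside $\ell_k$: excising a small disc $D^{n-1}$ of $W$ near each endpoint and gluing the tube turns $S_1^k$ and $S_2^k$ into a single leaf of the new $\F\vert W$, i.e. makes $g_k$ captured, while completeness is preserved (the tube only adds to $W$, and the excised discs are chosen in over-covered regions). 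Here tautness enters in the role it plays in Theorem~A: it furnishes, alongside $\ell_k$, a positively transverse routing that lets the tube be built transverse to $\F$ and embedded off the rest of $W$, with no dead-end obstruction to sliding one sheet toward the other. Capturing is monotone under adding handles, so after at most $m$ surgeries all of $\mathrm{Hol}(\F)$ is captured, and the resulting hypersurface is the desired $M'$.

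The main obstacle is precisely this surgery step: realizing each sheet--merging as an \emph{embedded} tube that stays transverse to $\F$ and provably enlarges the captured holonomy without creating new disconnections. Controlling the intersections of the guiding arc $\ell_k$, and of its tube, with $W$ and with the nearby leaves is a general-position problem that needs room --- this is exactly where the hypothesis $n\ge 4$ (so $\dim M'=n-1\ge 3$, with $(n-2)$--dimensional leaves of $\F\vert M'$) is spent, through a Whitney--type argument pushing the tube off $W$ while keeping it transverse; and it is where tautness is indispensable, the non-taut dead-end configurations being the known obstruction already in the excluded case $n=3$.
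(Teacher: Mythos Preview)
Your surgery step has a genuine gap. You want to attach to $W$ a tube $S^{n-2}\times[0,1]$ ``running alongside'' the tangential path $\ell_k\subset L$, and you want this tube to be transverse to $\F$. But the normal sphere bundle to a tangential arc is \emph{not} transverse to $\F$: at each point $\ell_k(t)$, the normal $(n-1)$--disk is transverse to $\F$ and carries a product foliation by parallel $(n-2)$--disks, so its boundary sphere $S^{n-2}$ has two tangencies with $\F$ (poles). Your tube therefore touches $\F$ along two curves of tangency. Invoking tautness to reroute the core transversely does not help: if the core of the tube is positively transverse to $\F$, then the leaves of $\F$ restricted to the tube are whole spheres $S^{n-2}\times\{t\}$, and these cannot be glued to the foliated boundary of the excised disks in $W$ (which carry the ``sphere with two poles'' trace foliation) without producing singularities. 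Either way, your handle attachment introduces Morse tangencies of $\F\vert W'$ that you never eliminate. You also use compact generation only to get \emph{finitely many} generators; you never use the crucial \emph{extendability} of those generators, which is what the paper exploits.

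The paper proceeds in the opposite order. It first builds a $2$--complex $K=T\cup\bigcup_i\mathrm{Im}(f_i)$ (an exhaustive transverse circle together with fences realizing a generating set), and takes $M_1$ to be the boundary of a thin neighborhood $\Omega$ of $K$. This $M_1$ already captures the full holonomy of $\F$, but it has, by construction, isolated Morse tangencies of indices $1$ and $n-2$ at the midpoints of the short edges of each fence. The elimination of these singularities is the heart of the argument: extendability of each generator (compact generation) yields a \emph{Levitt loop} at each singularity, and a metric ``tracking'' condition guarantees these loops are simple and bound embedded disks in the ambient leaves. Tautness then supplies a transverse arc $p_i$ joining each index--$(n-2)$ singularity to its index--$1$ partner; pushing along the Levitt loops gives a second transverse arc $q_i$, $\F$--equivalent to $p_i$. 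The surgery is of index $2$ along the transverse circle $p_i\cup q_i$, using the ambient disks to build the attached $\D^2\times S^{n-3}$; it removes both singularities without altering the holonomy. Your proposal contains neither the Levitt loop mechanism nor any substitute for it.
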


In particular, the holonomy pseudo-group of $\F\vert M'$
is Haefliger-equivalent to the holonomy pseudo-group of $\F$. Here, \emph{proper} means that $\partial_\pitchfork M'=\emptyset$
and that $\partial_\parallel M'=M'\cap\partial_\parallel M$. Of course, from proposition \ref{reduction_pro}, it follows by induction on $n$, that $M$ 
contains  a proper compact
 submanifold \emph{of dimension $3$}
transverse to $\F$, with the same two properties.

Recently, Martinez Torres, Del Pino and Presas have obtained by very different means
a similar result in the particular case where $M$
admits a global closed $2$-form inducing a symplectic form on every leaf \cite{mar2014}. I thank Fran Presas for pointing out to me the general problem.
\medbreak

 We also get
a \emph{characterization} of the dynamics of all foliations, taut or not, on
compact $3$-manifolds.

\begin{TB}\label{B_thm} Let $G$ be a pseudo-group of dimension $1$~. Then,
the following two properties are equivalent.
\begin{enumerate}
\item $G$
is realizable by some foliated compact $3$-manifold
 (possibly with a transverse boundary);

\item $G$ is compactly generated;
and for every orbit of $G$ in the boundary of every dead end component, its isotropy
group is commutative of rank at most $2$~.
\end{enumerate}
\end{TB}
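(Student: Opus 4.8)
The two implications are of quite different natures: $(1)\Rightarrow(2)$ amounts to reading off known constraints, whereas $(2)\Rightarrow(1)$ is the construction and rests essentially on Theorem A.

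\emph{Necessity} $(1)\Rightarrow(2)$. Suppose $G$ is Haefliger-equivalent to the holonomy pseudo-group of a foliation $\F$ on a compact $3$-manifold $M$. Compact generation is Haefliger's necessary condition for the holonomy of any foliation on a compact manifold, and it is invariant under Haefliger equivalence; hence $G$ is compactly generated. For the isotropy constraint, the dead end components of $G$ correspond to the dead end components of $\F$. A closed orbit $O$ in the boundary of such a component corresponds to a compact leaf $L$ bounding a dead end component of $\F$; by Goodman's theorem $L$ is a torus or an annulus, so $\pi_1(L)$ is abelian of rank at most $2$. Since the isotropy group of $O$ is exactly the image of the holonomy homomorphism from $\pi_1(L)$ into the group of germs fixing the point representing $O$, it is a quotient of $\pi_1(L)$, hence abelian of rank at most $2$.

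\emph{Sufficiency} $(2)\Rightarrow(1)$. If $G$ is taut, Theorem A applies directly and already yields a realization on a compact $3$-manifold, even without transverse boundary. Assume then that $G$ is not taut, and let $D_1,\dots,D_k$ be its dead end components; each $D_i$ is bounded by finitely many closed orbits whose isotropy groups are, by hypothesis, abelian of rank at most $2$. The plan is to cut $G$ along these boundary orbits into the dead end components themselves and a complementary pseudo-group $G'$, which by construction has no dead end component and is therefore taut, and which inherits compact generation from $G$. One then realizes the two parts separately and glues them. The complement $G'$, being compactly generated and taut, is realized by Theorem A in a relative form: along each boundary orbit one prescribes a compact tangential boundary leaf — a torus or an annulus — carrying the holonomy representation given by the isotropy group. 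Each dead end component $D_i$ is realized as a generalized Reeb component: a foliated solid torus $\D^2\times\S^1$ in the rank-$2$ case, or a suitable foliated piece with annular boundary leaf in the lower-rank case, whose interior leaves spiral onto the prescribed boundary leaf, reproducing the trapped dynamics of $D_i$. The abelianity together with the rank-$\le 2$ condition is exactly what makes such a boundary leaf available: a rank-$2$ group of commuting germs is realized as the holonomy of a torus leaf, a group of rank at most $1$ as that of an annulus. Gluing the pieces along their common torus or annulus leaves, with matching holonomy on the two sides, produces a compact foliated $3$-manifold — with a transverse boundary coming from the relative realization of $G'$ — that realizes $G$.

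The delicate point, and the expected main obstacle, is the gluing along the boundary orbits: one must produce on the two sides of each torus leaf the \emph{same} holonomy while keeping the interior dynamics of $G'$ and of $D_i$ correct. This is precisely where the paradoxical Reeb component of \cite{mei2010} obstructs realizability, the exterior isotropy there being too complicated to be matched by any torus holonomy. The abelian, rank-$\le 2$ hypothesis removes this obstruction, so that the entire construction is reduced to the taut realization of $G'$, which is Theorem A.
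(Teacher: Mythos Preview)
The direction $(1)\Rightarrow(2)$ is fine. In $(2)\Rightarrow(1)$ there is a genuine gap, and it lies precisely where you declare victory: the sentence ``each dead end component $D_i$ is realized as a generalized Reeb component\dots\ reproducing the trapped dynamics of $D_i$'' is not a step but the entire difficulty. A dead end component can have arbitrarily complicated interior dynamics. For instance, take any taut compactly generated pseudo-group $H$ of dimension $1$, realize it on a closed $3$-manifold by Theorem A, turbulize along a transverse circle: the resulting dead end component has interior holonomy essentially equal to $H$. The rank-$\le 2$ commutativity hypothesis constrains only the isotropy of the \emph{boundary} orbits; it says nothing about the interior, and a foliated solid torus whose leaves merely ``spiral onto the boundary'' will not in general reproduce that interior.

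The paper's decomposition is the reverse of yours. It does \emph{not} treat the $D_i$ as blocks. Instead it isolates small \emph{hinges} $(\Gamma_k,\Omega_k)$ --- commutative germinal neighborhoods of the essential orbits (or of maximal essential $I$-bundles) --- and throws the interiors of the dead end components back into the taut piece: once the essential orbits are removed, the remainder is taut and, by Lemma \ref{complement_lem}, still compactly generated. Each hinge, being generated by at most two commuting germs, is realized on $T^2\times[0,1]$ (Lemma \ref{commutative_realization_lem}); the taut piece is realized by Theorem A as stated, with no ``relative form'' needed. The gluing is \emph{not} along compact torus leaves with matched two-sided holonomy, as you propose, but along transverse circles $S_k^\pm$ lying in the overlap region $\Omega_k\setminus[a_k,b_k]$, via removal of tubular neighborhoods and identification of their boundaries. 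This sidesteps entirely the holonomy-matching problem you flag in your last paragraph.

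Two subsidiary points: you assert that $G'$ ``inherits compact generation from $G$'' --- this is not automatic for the restriction to an open saturated subset, and in the paper it is the content of Lemma \ref{complement_lem}, whose proof genuinely uses commutativity of the boundary isotropy --- and you invoke a relative version of Theorem A producing a prescribed torus boundary leaf with prescribed holonomy, which is not established anywhere.
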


As a basic but fundamental
 example, the nontaut pseudo-group of local transformations of the real
line generated by two homotheties $t\mapsto\lambda t$~,
$t\mapsto\mu t$~, with $\lambda$, $\mu>0$ and $\log \mu/\log\lambda\notin\Q$~,
verifies the properties of theorem B, and is \emph{not}
realizable by any foliated compact $3$-manifold without boundary  --- see
paragraph \ref{homothety_sbs} below. We know no simple, necessary and sufficient
 conditions for realizing a nontaut pseudo-group
on a compact $3$-manifold without transverse boundary.

More generally, skipping the condition of rank at most $2$:

\begin{TC} Let $G$ be a pseudo-group of dimension $1$ which is {compactly generated}
and such that, for every orbit in the boundary of every dead end component,
its isotropy group is commutative. Then, $G$
is realizable by some foliated compact orientable $4$-manifold, without transverse boundary.
\end{TC}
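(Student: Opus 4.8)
The plan is to reduce to the taut case, which is already settled by Theorem A, and to build by hand a four-dimensional \emph{generalized Reeb component} to fill in each dead end, the commutativity of the isotropy being exactly what makes such a component exist. First I would decompose $G$ along the closed orbits that bound its dead end components. Deleting the open dead end components leaves a pseudo-group $G_0$ which is still compactly generated but now \emph{taut}, its tangential boundary being the bounding closed orbits, whose isotropy groups $\Gamma_i$ are commutative by hypothesis. By Theorem A, $G_0$ is realized by a compact foliated $3$-manifold $(M_0,\F_0)$ without transverse boundary, whose tangential boundary leaves are closed surfaces $\Sigma_i$ carrying a holonomy representation $\rho_i$ of $\pi_1(\Sigma_i)$ onto $\Gamma_i$. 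Since the $\Sigma_i$ are genuine boundary leaves of $M_0$ and not themselves dead-end leaves inside $M_0$, no Goodman-type constraint applies to them: each $\Sigma_i$ automatically has genus at least $k_i/2$, where $k_i$ is the rank of $\Gamma_i\cong\Z^{k_i}$ (torsion-free, since orientation-preserving germs of the line have no torsion), because $\Gamma_i$ is a quotient of $\pi_1(\Sigma_i)$. Crossing with the circle, $M_0\times\S^1$ still realizes $G_0$, the transverse dynamic being unchanged by the pullback, but now its tangential boundary leaves are the closed $3$-manifolds $N_i:=\Sigma_i\times\S^1$, whose holonomy is still exactly $\Gamma_i$: the new circle factor lies \emph{in} the leaf and contributes trivial holonomy.

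Next, for each $i$ I would construct a compact orientable foliated $4$-manifold $(W_i,\F_i)$, a generalized Reeb component, with $\partial W_i=N_i$ a compact leaf whose holonomy is $\Gamma_i$ via the same $\rho_i$ (extended trivially over the circle factor), with every other leaf spiralling onto $N_i$, and whose transverse dynamic is that of the dead end component. Here commutativity is the decisive hypothesis. Realizing $\Gamma_i\cong\Z^{k_i}$ as the holonomy of the flat foliated bundle $\widetilde{N_i}\times_{\rho_i}[0,\epsilon)$ requires the given generators to act on the one-sided transversal $[0,\epsilon)$ as \emph{commuting} germs contracting into the dead end; this is the natural and essentially necessary condition, a free action on the interval being abelian by Hölder's theorem. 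The foliated bundle supplies the germ of $\F_i$ along $N_i$, its interior leaves being coverings of $N_i$ on which $\Gamma_i$ acts freely; the remaining construction amounts to capping the free end into a compact piece without introducing singularities, which I expect to do fibrewise precisely because the action is free there. In dimension three this capping would force $N_i$ to be a torus and hence $k_i\le 2$; it is the passage to dimension four, where $N_i$ is a closed $3$-manifold of automatically vanishing Euler characteristic and unrestricted first Betti number, that lifts this restriction.

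Finally I would glue each $(W_i,\F_i)$ to $(M_0\times\S^1,\F_0\times\S^1)$ along $N_i$; the holonomy germs match by construction, both sides inducing $\rho_i$. The result is a compact orientable $4$-manifold $M$ without transverse boundary, carrying a codimension-$1$ foliation $\F$ whose holonomy pseudo-group is Haefliger-equivalent to $G$.

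The main obstacle is the middle step: producing $(W_i,\F_i)$ as a genuine, nonsingular foliation that simultaneously realizes the prescribed commutative holonomy on a $3$-manifold boundary leaf and reproduces the \emph{full} interior dynamic of the dead end component up to Haefliger equivalence — in effect a higher-dimensional, higher-rank analogue of the classical Reeb component, whose very existence is what commutativity, together with the room afforded by the fourth dimension, is meant to buy. Verifying that the gluings respect orientations, and that the assembled holonomy pseudo-group is \emph{globally} Haefliger-equivalent to $G$ rather than merely matching orbit by orbit, is the remaining bookkeeping.
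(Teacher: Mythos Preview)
Your overall strategy --- decompose into a taut part and dead-end parts, realize each, and glue --- matches the paper's in spirit, but the execution diverges at the gluing step, and this is where the proposal has genuine gaps.

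First, a concrete error: you claim the boundary leaf $\Sigma_i$ of $M_0$ carries a holonomy representation \emph{onto} $\Gamma_i$, the full isotropy group in $G$. But the holonomy of $\Sigma_i$ inside $M_0$ is only the \emph{one-sided} isotropy, the image of $\Gamma_i$ acting on the taut side. This quotient can have strictly smaller rank than $\Gamma_i$: an element of $\Gamma_i$ may act trivially on the taut side and nontrivially on the dead-end side. If, say, $\Gamma_i\cong\Z^5$ while its taut-side image is $\Z^2$, Theorem~A may well produce $\Sigma_i$ of genus~$1$, and then $\pi_1(\Sigma_i\times\S^1)\cong\Z^3$ cannot surject onto $\Z^5$; no gluing along $N_i$ can then produce the correct two-sided holonomy. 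This is repairable --- one can enlarge the genus of each $\Sigma_i$ by extra handle surgeries during the Theorem~A construction --- but it is not automatic and you have not arranged it.

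Second, and more seriously: you rightly flag the construction of $(W_i,\F_i)$ with \emph{prescribed} boundary $N_i=\Sigma_i\times\S^1$ as the main obstacle, and you do not carry it out. The collar $\widetilde{N_i}\times_{\rho_i}[0,\epsilon)$ is fine, but ``capping the free end fibrewise'' is exactly the hard part: the interior leaves are in general infinite-sheeted covers of $N_i$, and compactifying while realizing the prescribed dead-end dynamic, with the prescribed $3$-manifold as boundary leaf, is not a routine operation.

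The paper sidesteps both difficulties by \emph{not} gluing along compact leaves at all. It packages each essential orbit, together with open collars on \emph{both} sides, into a \emph{hinge}: a commutative subpseudo-group on an interval. Each hinge of rank $r$ is realized explicitly on a concrete compact $4$-manifold assembled from copies of $T^2\times\S^2$ (one per pair of generators, glued along $\S^2\times\S^1$ pieces). Into both the taut realization $M_0\times\S^1$ and each hinge realization one then embeds short transverse circles $S_k^\pm$ lying in the overlap region (the taut collar of the hinge). The final gluing is performed by excising tubular neighbourhoods $\cong\D^3\times\S^1$ of these circles on each side and identifying the resulting $\S^2\times\S^1$ boundaries, on which both foliations restrict to the trivial foliation by $2$-spheres. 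No matching of compact-leaf topology is ever required, and the hinge realization is a fixed concrete model, independent of whatever boundary surfaces Theorem~A happens to produce.
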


\begin{cor}
Every pseudo-group
of dimension $1$ and of class $PL$  which is compactly generated,
is realizable (in dimension $4$).

\end{cor}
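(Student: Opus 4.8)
The plan is to derive the corollary directly from Theorem C. By that theorem, it suffices to verify that the hypothesis on isotropy groups is automatically fulfilled in the $PL$ category; that is, that for a compactly generated pseudo-group of dimension $1$ and of class $PL$, the isotropy group of every orbit is commutative --- in particular this will hold for the orbits lying in the boundary of a dead end component, which is all Theorem C requires.

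So let $x$ be a point of the transverse $1$-manifold, and let $g$ be an element of the pseudo-group fixing $x$. Being piecewise linear, $g$ has only finitely many breakpoints in a compact neighbourhood of $x$; hence there is a neighbourhood on which $g$ is affine on each side of $x$. Since we work with orientable (orientation-preserving) pseudo-groups, the germ of $g$ at $x$ is thus completely described by the pair of one-sided slopes
$$\bigl(g'(x^-),\,g'(x^+)\bigr)\in\R_{>0}\times\R_{>0}.$$
Two elements of the isotropy group sharing the same pair of slopes coincide on a neighbourhood of $x$, so they define the same germ; and composition of germs multiplies the slopes side by side. Therefore $g\mapsto(g'(x^-),g'(x^+))$ is an injective homomorphism from the isotropy group at $x$ into the abelian group $\R_{>0}\times\R_{>0}$.

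Consequently every isotropy group of the pseudo-group embeds into $\R_{>0}\times\R_{>0}$, and is in particular commutative. The hypothesis of Theorem C is thus satisfied by any compactly generated $PL$ pseudo-group of dimension $1$, and the corollary follows. I expect no genuine obstacle here: the whole content is the elementary remark that a $PL$ germ fixing a point is linear on each side near that point, which collapses every isotropy group to a subgroup of $\R_{>0}\times\R_{>0}$. Note that the \emph{rank} of such a subgroup is not constrained in any way, which is precisely why one must invoke Theorem C (dimension $4$, commutativity) rather than Theorem B (dimension $3$, commutativity \emph{and} rank $\le 2$); the basic homothety example quoted after Theorem B already exhibits an isotropy group of rank $2$ realized only in dimension $4$ among the boundaryless cases.
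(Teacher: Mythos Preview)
Your argument is correct and is exactly the intended one: the paper states the corollary immediately after Theorem~C without proof, leaving to the reader precisely the observation you spell out --- that a $PL$ germ fixing a point is linear on each side near that point, so the isotropy group embeds into $\R_{>0}\times\R_{>0}$ (or $\R_{>0}$ at a boundary point) and is therefore abelian. Your closing remark about rank being unconstrained, forcing the appeal to Theorem~C rather than Theorem~B, is also on point.
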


There remain several open questions between these positive results and the negative result of \cite{mei2010}.

Regarding the isotropy groups of the orbits bounding the dead end components,
the zoology of the groups that always allow a realization in high dimension,
remains obscure (and may be intractable).

Consider a pseudo-group $G$ of dimension $1$ which is compactly generated.
If $G$ is real analytic,
is it necessarily realizable? If $G$ is realizable, is it necessarily realizable in dimension $4$~?

Also,
beyond the realization problem itself, one can ask for a more universal property.
Call $G$ \emph{universally realizable}
if there is a system of foliated compact manifolds, each realizing $G$~, and of foliation-preserving embeddings, whose inductive limit is a Haefliger classifying
space for $G$~. One can prove (not tackled in the present paper) that every compactly generated pseudo-group of dimension
one and class $PL$ is
universally realizable.
What if we change $PL$ for "taut"? for "real analytic"?

 The problem is that the method of the present paper, essentially
 the cancellation of a pair of distant singularities
of indices $1$ and $n-1$ in a Morse-singular foliation on a $n$-manifold
by an elementary surgery
without changing the dynamic, is very specific to these indices,
and we know no equivalent
e.g. for a pair of singularities of index $2$ in ambient dimension $4$~.
\medbreak

\section{Preliminaries on pseudo-groups}
In the first two paragraphs \ref{pghe_sbs} and \ref{compact_generation_sbs},
we recall concepts and facts about Haefliger equivalence and compact
generation, in a form that fits our purposes. The material here is essentially due to Haefliger. In the third paragraph \ref{tautness_sbs}, we translate into the frame
of pseudo-groups of dimension $1$, the notion of topological tautness, which is classical in the frame
of foliations of codimension $1$.
\medbreak

In $\R^n$~, one writes
 $\D^n$
the compact unit ball; $\S^{n-1}$ its boundary;
 $*$ the basepoint $(1,0,\dots,0)\in\S^{n-1}$~; and $dx_n$ the foliation
  $x_n=constant$~. ``Smooth'' means $C^\infty$.

\subsection{Pseudo-groups and Haefliger equivalences}\label{pghe_sbs}
 An arbitrary differentiability class is understood. Let $T$~, $T'$ be manifolds of the same dimension, not necessarily compact. Smooth boundaries are allowed.

A \emph{local transformation} from $T$ to $T'$ is a diffeomorphism $\gamma$ between two nonempty, topologically open subsets $Dom(\gamma)\subset T$~,
$Im(\gamma)\subset T'$~.
Note that the boundary is necessarily invariant~:$$Dom(\gamma)\cap\partial T=\gamma\mun(
\partial T'
)$$Given also a local transformation $\gamma'$ from $T'$ to $T''$~,
the composite $\gamma'\gamma$ is defined whenever $Im(\gamma)$ meets $Dom(\gamma')$ (an inclusion is not necessary), and one has~:$$Dom(\gamma'\gamma)=\gamma\mun(Dom(\gamma'))$$
Given two sets of local transformations $A$~, $B$~, as usual, $AB$
denotes the set of the composites of all composable pairs $\alpha\beta$~,
 where $\alpha\in A$ and $\beta\in B$~. Also, $1_U$ denotes the identity map
of the set $U$~.

\begin{dfn}\cite{veb1931}
A \emph{pseudo-group} on a manifold $T$ is a set $G$ of local self-transformations of $T$ such that~:
\begin{enumerate}
\item For every nonempty, topologically open $U\subset T$~, the identity map $1_U$ belongs to $G$~;
\item $GG=G\mun=G$~;
\item For every local self-transformation $\gamma$ of $T$~, if $Dom(\gamma)$ admits an open cover $(U_i)$
 such that every restriction $\gamma\vert U_i$
belongs to $G$~, then $\gamma$ belongs to $G$~.
\end{enumerate}
\end{dfn}

Then, by (1) and (2), $G$ is also stable by restrictions: if $\gamma$ belongs to $G$ and if $U\subset Dom(\gamma)$ is nonempty open, then $\gamma\vert U$ belongs to $G$~.

Example 1. Every set $S$ of local self-transformations of $T$ is contained in a smallest pseudo-group $<S>$ containing $S$~,
called the pseudo-group
\emph{generated} by $S$~. A local transformation $\gamma$ of $T$ belongs to $<S>$ if
and only if, in a neighborhood of every point in its domain, $\gamma$ splits as a composite $\sigma_\ell\dots\sigma_1$~,
with $\ell\ge 0$ and $\sigma_1$~,\dots, $\sigma_\ell\in S\cup S\mun$~.

Example 2. Given a pseudo-group $(G,T)$~, and a nonempty open subset $U\subset T$~,
one has on $U$ a \emph{restricted} pseudo-group $G\vert U:=1_UG1_U$~:
 the set of the elements of $G$ whose domains
and images are both contained in $U$~.

Example 3. More generally, given a pseudo-group $(G,T)$~, a manifold $T'$~, and a set $F$ of local transformations from
$T'$ to $T$~,
 one has on $T'$ a
\emph{pullback} pseudo-group $F^*(G):=<F\mun GF>$~.
\medbreak
Under a pseudo-group $(G,T)$~, every point $t\in T$ has~:
\begin{enumerate}
\item
An \emph{orbit} $G(t)$~: the set of the images $\gamma(t)$ through
 the local transformations
 $\gamma\in G$ defined at $t$~;
\item
An \emph{isotropy group} $G_t$~:
 the group of the germs at $t$ of the
 local transformations $\gamma\in G$ defined at $t$ and fixing $t$~.
\end{enumerate}
 Call an open subset $T'\subset T$ \emph{exhaustive} if $T'$ meets every orbit. Call the pseudo-group  $G$
\emph{cocompact} if $T$ admits a relatively compact exhaustive open subset. Call the pseudo-group $G$
\emph{connected} if every two points of $T$ are linked by a finite sequence of points of $T$~,
 of which every two consecutive ones
lie in the same orbit or in the same connected component of $T$~. Obviously, every pseudo-group splits
as a disjoint sum of connected ones.
\medbreak
Let $(M,\F)$ be a manifold foliated in codimension $q$~. A smooth boundary is allowed, in which case each connected component of $\partial M$ must be tangent to $\F$ or transverse to $\F$~. One writes $\partial_\parallel M$ the union of the tangential components. By a \emph{transversal,} one means a $q$-manifold $T$ immersed into $M$
transversely to $\F$~, not necessarily compact, and such that $\partial T=T\cap\partial_\parallel M$~. One calls $T$ \emph{exhaustive} (or \emph{total}) if it meets every leaf.

\begin{dfn}\label{holonomy_dfn}\cite{ehr1954}
The \emph{holonomy pseudo-group} $Hol(\F,T)$ of a foliation $\F$ on an exhaustive transversal
 $T$ is the pseudo-group \emph{generated} by the local transformations $\gamma$ of $T$
for which there exists a map $$f_\gamma: [0,1]\times Dom(\gamma)\to M$$
such that~:
\begin{itemize}
\item $f_\gamma\pitchfork\F$
and $f_\gamma^*\F$ is the slice foliation on $[0,1]\times Dom(\gamma)$,
whose leaves are the $[0,1]\times t$'s ($t\in Dom(\gamma)$);
\item  $f_\gamma(0,t)=t$ and  $f_\gamma(1,t)=\gamma(t)$~,
 for every $t\in Dom(\gamma)$~.

\end{itemize}
\end{dfn}

We may call $f_\gamma$ a \emph{fence} associated to $\gamma$.
This holonomy pseudo-group does represent the dynamic of the foliation:
 there is a one-to-one correspondence $L\mapsto L\cap T$ between the leaves of $\F$ and  the orbits of $Hol(\F,T)$~; a topologically closed orbit
corresponds to a topologically closed leaf; the isotropy group of $Hol(\F,T)$
at any point
  is isomorphic with the holonomy group of the corresponding leaf; etc.
\begin{dfn}\label{equivalence_dfn}\cite{hae1984}
A \emph{Haefliger equivalence} between two pseudo-groups $(G_i,T_i)$ ($i=0,1$) is a pseudo-group $G$
on the disjoint union $T_0\sqcup T_1$~, such that
 $G\vert T_i=G_i$ ($i=0,1$) and that every orbit of $G$ 
meets both $T_0$ and $T_1$~.
\end{dfn}

Example 1. The two holonomy pseudo-groups of a same foliation on two exhaustive transversals are Haefliger equivalent.

Example 2. The restriction of a pseudo-group $(G,T)$ to any exhaustive open
subset of $T$ is Haefliger-equivalent to $(G,T)$~.

Example 3. More generally, let $(G,T)$ be a pseudo-group, and let $F$ be
 a set of local transformations from $T'$ to $T$~. Assume that:
\begin{enumerate}
\item $FF\mun\subset G$~;
\item $\cup_{\phi\in F}Dom(\phi)=T'$~;
\item $\cup_{\phi\in F}Im(\phi)$ is $G$-exhaustive in $T$~.
\end{enumerate}
Then, the pseudo-group $<F\cup G>$ on $T\sqcup T'$  is a Haefliger equivalence between $(G,T)$ and  $(F^*(G), T')$~.

\medbreak
The Haefliger equivalence is actually an equivalence relation between pseudo-groups.
Given two Haefliger equivalences: $G$
between $(G_0,T_0)$ and $(G_1,T_1)$, and
 $G'$
between $(G_1,T_1)$ and $(G_2,T_2)$, one forms
the pseudo-group $<G\cup G'>$ on $T_0\sqcup T_1\sqcup T_2$~.
Then, $<G\cup G'>\vert(T_0\sqcup T_2)$ is a
 Haefliger equivalence between $(G_0,T_0)$ and $(G_2,T_2)$~.

Every Haefliger equivalence induces a one-to-one correspondence between the orbit spaces
 $T_i/G_i$ ($i=0,1)$~. A closed orbit corresponds to a closed orbit. The isotropy groups at points on corresponding orbits are isomorphic.

\subsection{Compact generation}\label{compact_generation_sbs}

Let $(G,T)$ be a pseudo-group. We say that
 $\gamma\in G$ is  ($G$-) {\it extendable} if there exists some $\bar\gamma\in G$
 such that $Dom(\gamma)$ is contained \emph{and relatively compact}
 in $Dom(\bar\gamma)$, and that $\gamma=\bar\gamma\vert Dom(\gamma)$.
 The composite of two extendable elements is also extendable. The inverse of an extendable element is also extendable.

\begin{dfn}\label{CG_dfn} {\rm (Haefliger)\cite{hae1985}} A pseudo-group $(G,T)$ is \emph{compactly generated} if
 there are an exhaustive, relatively compact, open subset $T'\subset T$~, and finitely many elements
of $G\vert T'$ which are $G$-{extendable}, and which generate $G\vert T'$~.
\end{dfn}

\begin{pro}{\rm (Haefliger)\cite{hae1985}\cite{hae2002}}\label{invariance_pro}
Compact generation is invariant by Haefliger equivalence.
\end{pro}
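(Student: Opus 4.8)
The plan is to transport the finite generating data of $(G_0,T_0)$ across the Haefliger equivalence $G$ on $T_0\sqcup T_1$. Fix an exhaustive, relatively compact open $T_0'\subset T_0$ and finitely many $G_0$-extendable elements $\gamma_1,\dots,\gamma_n\in G_0\vert T_0'$ generating $G_0\vert T_0'$. Since every orbit of $G$ meets $T_0$ and $T_0'$ is $G_0$-exhaustive, $T_0'$ meets every orbit of $G$. First I would manufacture a transversal $T_1'$ and a finite family of ``bridges'': because every $G$-orbit meets $T_1$, each $t\in\overline{T_0'}$ lies in $Dom(g)$ for some $g\in G$ with $g(t)\in T_1$, and restricting to a small connected neighborhood makes $g$ a local transformation from $T_0$ to $T_1$. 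By compactness of $\overline{T_0'}$, finitely many such $g_1,\dots,g_k\in G$ admit open domains $U_i$ with $\overline{U_i}\subset Dom(g_i)$ whose union covers $\overline{T_0'}$. Setting $T_1':=\bigcup_i g_i(U_i)$ gives an open, relatively compact subset of $T_1$, and it is exhaustive: any $G$-orbit meets $T_0'$ at a point $s$, which lies in some $U_i$, whence $g_i(s)\in T_1'$ lies on the same orbit.

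Next I would write down the candidate generators. Each $\gamma_l$, lying in $G_0\vert T_0'$, has domain and image in $T_0'$; hence in any local factorization of an element of $G_0\vert T_0'$ into the $\gamma_l^{\pm1}$, every intermediate point stays in $T_0'\subset\bigcup_iU_i$. I would let $\Sigma\subset G_1\vert T_1'$ be the finite set of suitable restrictions of the self-transformations $g_i\gamma_lg_j\mun$ and $g_ig_j\mun$ of $T_1$ (those with domain and image in $T_1'$). To see that $\Sigma$ generates $G_1\vert T_1'$, take $\delta\in G_1\vert T_1'$ and $y\in Dom(\delta)$; write $y=g_i(x)$ and $\delta(y)=g_j(x')$ with $x,x'\in T_0'$. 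Then $\mu:=g_j\mun\delta g_i$ is a self-transformation of $T_0$ near $x$, so it is an element of $G_0\vert T_0'$ there and factors locally as $\mu=\sigma_m\cdots\sigma_1$ with $\sigma_p\in\{\gamma_l^{\pm1}\}$. Choosing, for each intermediate point $x_p$, an index $i_p$ with $x_p\in U_{i_p}$ (and $i_0=i$, $i_m=j$), and inserting $g_{i_p}\mun g_{i_p}$ between consecutive factors, I would obtain near $y$
$$\delta=\bigl(g_{i_m}\sigma_mg_{i_{m-1}}\mun\bigr)\cdots\bigl(g_{i_1}\sigma_1g_{i_0}\mun\bigr),$$
a composite of elements of $\Sigma\cup\Sigma\mun$ (the case $m=0$ giving a single bridge factor $g_jg_i\mun$). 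Since $y$ is arbitrary, the sheaf axiom then yields $\delta\in\langle\Sigma\rangle$.

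Finally I would check extendability. Each $g_i\vert U_i$ is $G$-extendable, witnessed by $g_i$ itself on $Dom(g_i)\supset\overline{U_i}$, and each $\gamma_l$ is $G_0$- hence $G$-extendable; as composites and inverses of extendable elements are extendable, every element of $\Sigma$ is a restriction, with relatively compact domain, of a composite $\bar g_i\bar\gamma_l\bar g_j\mun$ or $\bar g_i\bar g_j\mun$. Such a composite sends a part of $T_1$ into $T_1$, so it lies in $G\vert T_1=G_1$; hence each element of $\Sigma$ is $G_1$-extendable, and $(T_1',\Sigma)$ witnesses that $(G_1,T_1)$ is compactly generated.

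I expect the generation step to be the main obstacle: one must verify that the intermediate points of the local factorization of $\mu$ really remain in $\overline{T_0'}$, so that the chart indices $i_p$ exist — this is exactly the point at which $\gamma_l\in G_0\vert T_0'$ is used — and that the routed composite is defined on a full neighborhood of $y$ rather than only at $y$. The extendability bookkeeping, and in particular arranging the restricted composites in $\Sigma$ to have domains relatively compact in the domains of their chosen extensions, is routine but must be tracked carefully through the compositions.
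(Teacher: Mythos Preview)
The paper does not prove this proposition; it is stated with attribution to Haefliger and references \cite{hae1985}, \cite{hae2002}. Your argument is the standard transport-of-generators proof and is essentially correct.

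One small slip deserves a fix. You set $T_1':=\bigcup_i g_i(U_i)$ with the $U_i$ an open cover of $\overline{T_0'}$, and then, for $y\in Dom(\delta)\subset T_1'$, you ``write $y=g_i(x)$ \dots\ with $x\in T_0'$''. But your construction only gives $x\in U_i$, and since the $U_i$ must overflow $\partial T_0'$ in order to cover $\overline{T_0'}$, you cannot conclude $x\in T_0'$; without that, $\mu=g_j\mun\delta g_i$ is only known to lie in $G_0$ near $x$, not in $G_0\vert T_0'$, and your factorization into the $\gamma_l^{\pm1}$ is not available. The repair is immediate: redefine $T_1':=\bigcup_i g_i(U_i\cap T_0')$. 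This set is still open, still relatively compact (its closure lies in $\bigcup_i g_i(\overline{U_i})$), and still $G_1$-exhaustive (every $G$-orbit meets $T_0'\subset\bigcup_i U_i$). Now $y\in T_1'$ forces $x\in T_0'$, so $\mu$ genuinely lies in $G_0\vert T_0'$ near $x$ and the rest of your routing argument goes through verbatim. Your remarks on extendability --- that the bridges $g_i\vert U_i$ are $G$-extendable with compact $\overline{U_i}\subset Dom(g_i)$, that composites of extendables are extendable, and that the resulting extensions land in $G\vert T_1=G_1$ --- are all sound.
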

\begin{pro}{\rm (Haefliger)\cite{hae1985}\cite{hae2002}}
 The holonomy pseudo-group of every foliated compact manifold is compactly generated.
\end{pro}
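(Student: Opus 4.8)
The plan is to build a convenient exhaustive transversal directly out of a finite atlas of foliation boxes, then to recognize the holonomy pseudo-group as generated by the finitely many elementary transition maps, each of which turns out to be extendable precisely because every box can be enlarged a little. First I would use the compactness of $M$ to choose finitely many foliation boxes $\hat U_1,\dots,\hat U_n$ covering $M$, each with a product structure $\hat U_i\cong P_i\times\hat T_i$ in which $\F$ induces the slices $P_i\times\{t\}$ and $\hat T_i$ is a $q$-disk transverse to $\F$. Inside each $\hat U_i$ I would select a smaller, concentric box $U_i\cong P_i\times T_i$ with $\overline{U_i}\subset\hat U_i$ and $\overline{T_i}\subset\hat T_i$, arranged so that the $U_i$ still cover $M$. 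I would then take for transversal the disjoint union $T:=\bigsqcup_i T_i$, immersed in $M$ in the obvious way. Since the $U_i$ cover $M$, $T$ meets every leaf, so it is exhaustive; and it is relatively compact by construction, so it can itself serve as the set $T'$ in Definition \ref{CG_dfn}.

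Next I would show that $G:=Hol(\F,T)$ is generated by the finitely many elementary transition maps $g_{ij}$ attached to the pairs $(i,j)$ with $U_i\cap U_j\ne\emptyset$, where $g_{ij}$ carries the parameter of a plaque of $U_i$ meeting $U_j$ to the parameter of the plaque of $U_j$ containing it; each $g_{ij}$ plainly admits a fence, hence lies in $G$. That they generate $G$ is the standard decomposition of holonomy along a leaf path: for $\gamma$ admitting a fence $f_\gamma$ and for each $t\in Dom(\gamma)$, the arc $s\mapsto f_\gamma(s,t)$ is a compact subset of a leaf, which a Lebesgue-number argument covers by a finite chain of plaques of the $U_i$; the germ of $\gamma$ at $t$ is then the composite of the corresponding $g_{ij}^{\pm1}$. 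Since membership in a generated pseudo-group is a local condition (Example 1 of paragraph \ref{pghe_sbs}), this gives $\gamma\in\langle g_{ij}\rangle$, whence $G=\langle g_{ij}\rangle$; as $T'=T$ here, the $g_{ij}$ generate $G\vert T'$.

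Finally, and this is the crux, I would verify that each $g_{ij}$ is $G$-extendable. The overlap $\hat U_i\cap\hat U_j$ likewise defines a transition map $\hat g_{ij}\in G$ from $\hat T_i$ to $\hat T_j$, and by construction $\hat g_{ij}$ restricts to $g_{ij}$ on the domain of the latter. The point is that $Dom(g_{ij})$ is relatively compact in $Dom(\hat g_{ij})$: if $t\in\overline{Dom(g_{ij})}$, pick $t_k\to t$ and points $x_k\in(P_i\times\{t_k\})\cap U_j$; by compactness of $\overline{U_i}$ a subsequence of the $x_k$ converges to some $x\in\overline{U_i}\cap\overline{U_j}\subset\hat U_i\cap\hat U_j$, and $x$ lies on the $\hat U_i$-plaque of parameter $t$, so $t\in Dom(\hat g_{ij})$. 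Thus $\overline{Dom(g_{ij})}\subset Dom(\hat g_{ij})$, which is exactly extendability, and the proposition follows from Definition \ref{CG_dfn}.

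The main obstacle I anticipate is not depth but geometric bookkeeping concentrated in this last step: one must set up the concentric boxes so that shrinking the boxes genuinely shrinks their overlaps, guaranteeing the \emph{closed} containment $\overline{Dom(g_{ij})}\subset Dom(\hat g_{ij})$ rather than the cheap open one; and one must ensure that the finitely many enlarged $\hat U_i$ remain honest product foliation boxes, which is again where the compactness of $M$ is spent. Once these choices are fixed, the generation and extendability arguments above are essentially formal.
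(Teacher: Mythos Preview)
The paper does not prove this proposition; it merely cites Haefliger \cite{hae1985}\cite{hae2002}. Your outline is essentially the standard argument found there: a finite atlas of foliation boxes, the elementary transition maps as generators, and their extendability via slightly enlarged boxes.

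There is, however, a genuine bookkeeping error in your setup of the transversal versus the subset $T'$. You take the transversal to be $T=\bigsqcup_i T_i$ and then set $T'=T$. But Definition~\ref{CG_dfn} requires $T'$ to be relatively compact \emph{in $T$}; since the $T_i$ are open $q$-disks, $T$ is not compact and $T'=T$ fails this. Worse, your extensions $\hat g_{ij}$ are local transformations between the $\hat T_i$'s, which are not contained in $T$ at all, so $\hat g_{ij}\notin G=Hol(\F,T)$ and the extendability clause is not even formulated in the right pseudo-group.

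The fix is immediate and you have all the pieces: take the exhaustive transversal to be the \emph{larger} disjoint union $\hat T:=\bigsqcup_i\hat T_i$, set $G:=Hol(\F,\hat T)$, and let $T':=\bigsqcup_i T_i\subset\hat T$. Then $T'$ is open, exhaustive (since the $U_i$ cover $M$), and relatively compact in $\hat T$ (since $\overline{T_i}\subset\hat T_i$ is compact). Your $g_{ij}$ lie in $G\vert T'$, generate it by your Lebesgue-number argument, and your $\hat g_{ij}$ now genuinely belong to $G$, so the compactness argument for $\overline{Dom(g_{ij})}\subset Dom(\hat g_{ij})$ gives extendability in the correct sense. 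With this adjustment the proof goes through. (A minor further point: you should arrange the charts so that each plaque of $U_i$ meets at most one plaque of $U_j$, or else index by connected components of the overlaps, to make each $g_{ij}$ a well-defined single local transformation; this is routine.)
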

We shall also use the following fact, which amounts to say that
the choice of $T'$ is arbitrary.
\begin{lem}\label{transversal_change_lem}{\cite{hae1985}\cite{hae2002}}
Let $(G,T)$ be a {compactly generated} pseudo-group,
and $T''\subset T$ be \emph{any}
 exhaustive, relatively compact, open subset.
Then there are finitely many elements
of $G\vert T''$ that are {extendable} in $G$~, and that generate $G\vert T''$~.
\end{lem}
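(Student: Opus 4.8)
The plan is to transport the given extendable generators of $G\vert T'$ to $T''$ by conjugating them with finitely many \emph{transition maps} running from $T''$ into $T'$. First I would fix extendable generators $\gamma_1,\dots,\gamma_n$ of $G\vert T'$, furnished by compact generation. Since $T'$ is exhaustive, the orbit of every point of the compact set $\Cl(T'')$ meets $T'$; restricting suitable elements of $G$ to small relatively compact neighborhoods and shrinking so that the images fall in $T'$, one obtains extendable local transformations of $G$ into $T'$, and by compactness of $\Cl(T'')$ finitely many of them suffice to cover $\Cl(T'')$ by their domains. Intersecting the domains with $T''$, I get finitely many extendable $\phi_a\in G$ with $Dom(\phi_a)\subset T''$, $Im(\phi_a)\subset T'$, whose domains cover $T''$.

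The place where care is needed is the rewriting of a word in the $\gamma_i$ as a word in the \emph{conjugated} generators: at each intermediate point of the word one must insert $\phi_c\phi_c\mun=1$, and a priori these intermediate points range over the noncompact set $T'$, so no fixed finite family of transition maps would reach them all. Here extendability of the $\gamma_i$ is decisive: since $Dom(\gamma_i)$ is relatively compact in $Dom(\bar\gamma_i)\subset T'$, and likewise for the images, the set $K':=\bigcup_i\bigl(\Cl(Dom(\gamma_i))\cup\Cl(Im(\gamma_i))\bigr)$ is a \emph{compact} subset of $T'$ containing every point obtained by applying a single $\gamma_i^{\pm1}$. Using exhaustiveness of $T''$ at each point of $K'$, I would then enlarge the finite family $\{\phi_a\}$ so that, in addition, the images $Im(\phi_a)$ cover $K'$.

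Next I set $S'':=\{\phi_b\mun\gamma_i^{\epsilon}\phi_a\}\cup\{\phi_b\mun\phi_a\}$ (indices $a,b$ ranging over the finite family, $1\le i\le n$, $\epsilon=\pm1$), a finite subset of $G\vert T''$; each of its elements is extendable, being a composite of extendable elements and their inverses. It remains to check that $S''$ generates $G\vert T''$. Given $g\in G\vert T''$ and $t\in Dom(g)$, choose $\phi_a$ defined at $t$ and $\phi_b$ defined at $g(t)$; then $\phi_b g\phi_a\mun$ is, near $\phi_a(t)$, a local transformation of $G$ with domain and image in $T'$, hence an element of $G\vert T'$, hence locally a word $\sigma_\ell\cdots\sigma_1$ in the $\gamma_i^{\pm1}$, so that $g=\phi_b\mun\sigma_\ell\cdots\sigma_1\phi_a$ near $t$. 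Each of its $\ell-1$ intermediate points lies in $K'$, hence in some $Im(\phi_c)$; inserting the corresponding $\phi_c\phi_c\mun=1$ and regrouping telescopes $g$ into a composite of elements of $S''$ (the case $\ell=0$ giving simply $g=\phi_b\mun\phi_a$). Thus every $g\in G\vert T''$ is, near each point of its domain, a composite of elements of $S''$, whence $\langle S''\rangle=G\vert T''$ by the characterization of generated pseudo-groups.

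The main obstacle is exactly the one isolated in the second paragraph: a single finite alphabet of transition maps must reconnect \emph{every} word in the generators, even though such words visit points scattered throughout the noncompact transversal $T'$. The argument turns entirely on the observation that extendability of the $\gamma_i$ pins these intermediate points into the compact set $K'\subset T'$, which can therefore be covered by finitely many transition maps; without extendability the telescoping could not be performed with finitely many elements.
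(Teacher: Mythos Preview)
The paper does not supply its own proof of this lemma; it is stated with citations to Haefliger \cite{hae1985}\cite{hae2002} and left at that. So there is no in-paper argument to compare against.

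Your argument is correct and is essentially the standard one due to Haefliger. The crucial observation---that extendability forces all intermediate points of any word in the $\gamma_i^{\pm1}$ to lie in the fixed compact set $K'=\bigcup_i\bigl(\Cl(Dom(\gamma_i))\cup\Cl(Im(\gamma_i))\bigr)$, so that a single finite family of transition maps suffices for the telescoping---is exactly the point. One small inaccuracy: you write that $K'$ is a compact subset \emph{of $T'$}, but in general the closures $\Cl(Dom(\gamma_i))$ lie only in $Dom(\bar\gamma_i)\subset T$, not necessarily in the open set $T'$. This does not matter for the proof: what you actually use is compactness of $K'$ in $T$ together with exhaustiveness of $T''$ in $T$, and the intermediate points themselves lie in the open set $\bigcup_i(Dom(\gamma_i)\cup Im(\gamma_i))\subset K'$. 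With that cosmetic correction the argument goes through as written.
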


Note: pseudo-groups vs. groupoids. The above definition of compact generation, although it may look strange at first look, is relevant;
 in particular because it is preserved through Haefliger equivalences.
 N. Raimbaud has shown that compact generation has a somewhat more natural generalization
in the frame of topological groupoids.  Write $\Gamma(T)$ the topological
groupoid of the germs of
local transformations of $T$~. Let $G$ be any pseudo-group on $T$~. Then,
the set $\Gamma$ of germs $[g]_t$~, for all $g\in G$ and all $t\in Dom(g)$~, is
in $\Gamma(T)$ an open subgroupoid whose space of objects is the all of $T$~. It is easily verified that
 one gets this way a bijection between the set of pseudo-groups on $T$ and the set of open subgroupoids in $\Gamma(T)$ whose space of objects is $T$~.
 The pseudo-group $G$ is compactly generated if and only if the topological groupoid $\Gamma$
 contains an exhaustive, relatively compact, open subset, which generates a full subgroupoid \cite{rai2009}.

\subsection{Tautness for pseudo-groups of dimension $1$}\label{tautness_sbs}
We now consider a pseudo-group $(G,T)$ \emph{of dimension $1$~,} that is, $\dim T=1$~; and \emph{oriented,} that is, $T$ 
is oriented and $G$ is orientation-preserving. From now on, all pseudo-groups will be understood of dimension $1$ and oriented.

 By a \emph{positive arc} $[t,t']$
of origin $t$ and extremity $t'$~,
we mean an orientation-preserving embedding of the interval $[0,1]$ into $T$ sending $0$ to $t$ and $1$ to $t'$~.

A \emph{positive chain}
is a finite sequence of positive arcs, such that the extremity of each (but the last) lies on the same
orbit as the origin of the next.
A \emph{positive loop} is a positive chain such that the extremity of the last arc lies on the same
orbit at the origin of the first.

\begin{dfn} A pseudo-group $(G,T)$ of dimension 1 is \emph{taut}
if every point of $T$ lies either on a positive chain
whose origin and extremity belong to $\partial T$~, or on  a positive loop.
\end{dfn}
\begin{pro}\label{taut_pro} Let $(G,T)$ be
a cocompact pseudo-group of dimension 1. Then, $(G,T)$
 is taut if and only if it is Haefliger-equivalent to
 some pseudo-group $(G',T')$ such that $T'$ is a finite disjoint
  union of compact intervals and circles.
\end{pro}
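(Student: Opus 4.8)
The plan is to prove the two implications separately, after one preliminary remark: a pseudo-group $(G',T')$ whose space $T'$ is a finite disjoint union of compact intervals and circles is \emph{taut no matter what $G'$ is}. Indeed, a point of an interval component $[a,b]$ lies on the single positive arc $[a,b]$, which is a boundary-to-boundary chain since $a,b\in\partial T'$; and a point of a circle component lies on the positive loop obtained by going once around (two positive arcs, the second ending at the origin of the first, which is on the same orbit as itself). Consequently the implication ``Haefliger-equivalent to such a $(G',T')$ $\Rightarrow$ taut'' reduces to the claim that tautness is invariant under Haefliger equivalence, which I would isolate and prove as a transfer lemma.

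For that lemma, let $H$ be a Haefliger equivalence on $T\sqcup T'$ realizing $(G,T)\sim(G',T')$, so $H\vert T=G$, $H\vert T'=G'$, and every $H$-orbit meets both pieces. Given a point $t\in T$, pick $t'\in T'$ on the same $H$-orbit; by tautness of $(G',T')$, $t'$ lies on a positive $G'$-loop or boundary chain $\ell$. I would transport $\ell$ into $T$ arc by arc: each arc of $\ell$ is compact, so it can be covered by finitely many germs of elements of $H$ carrying a neighborhood in $T'$ into $T$ (these exist because every $H$-orbit meets $T$ and $T$ is open in $T\sqcup T'$). Orientation-preservation turns each such germ into a positive sub-arc in $T$, and on overlaps two of them differ by an element of $H\vert T=G$; moreover the orbit-connections of $\ell$, once transported, join points of $T$ lying on a common $H$-orbit, hence on a common $G$-orbit (a germ between two points of the open piece $T$ restricts into $H\vert T=G$). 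This produces a positive $G$-loop (resp. boundary chain) in $T$ through $t$, since $t$ is itself the image of $t'$ under one of the transporting germs; boundary chains transport to boundary chains because the transporting germs respect $\partial$. Running the same argument with the roles of $T$ and $T'$ exchanged gives the invariance in both directions, and in particular settles ``$\Leftarrow$''.

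For ``$\Rightarrow$'', assume $(G,T)$ cocompact and taut, and construct $(G',T')$ explicitly. Cocompactness furnishes a relatively compact exhaustive open $U\subset T$ with $\bar U$ compact. For each point of $\bar U$, tautness provides a positive loop or a boundary-to-boundary chain through it, and after extending arcs slightly one may assume the point lies in the \emph{interior} of one of the (open) arcs; these arc-interiors form an open cover of the compact set $\bar U$, from which I extract finitely many loops $\ell_1,\dots,\ell_a$ and boundary chains $c_1,\dots,c_b$. Each loop and chain is a finite family of positive arcs together with orbit-connecting elements $g_i\in G$ (recall ``same orbit'' yields a genuine element of $G$). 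I then build $T'$ by gluing the arcs of each $\ell_k$ cyclically along its $g_i$'s into a circle $C_k$, and the arcs of each $c_k$ linearly along its $g_i$'s into a compact interval $I_k$ whose two endpoints are the origin and extremity of $c_k$, which lie on $\partial T$; thus $T'=\coprod_k C_k\sqcup\coprod_k I_k$ is a finite disjoint union of circles and compact intervals. Taking $F$ to be the family of charts of $T'$ into $T$ — the inclusions on arc-interiors, and at each gluing point the chart that uses the corresponding $g_i$ — I would verify the three hypotheses of Example 3 of §\ref{pghe_sbs}: the domains cover $T'$; the images are the arcs, which cover $U$ and are therefore $G$-exhaustive; and $FF\mun\subset G$ because every transition is either the restriction of an identity (when two inclusion-charts overlap in $T$) or the restriction of one of the $g_i$. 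That example then yields a Haefliger equivalence between $(G,T)$ and $(F^*(G),T')$, completing the proof.

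The main obstacle I expect is the condition $FF\mun\subset G$ in the forward construction, because arcs coming from distinct loops or chains may overlap in $T$ and one must ensure their transitions land in $G$. The device that resolves this is to let all charts map into $T$ by genuine inclusion (or by the $g_i$ at gluing points), so that every transition is literally a restriction of $1_T$ or of some $g_i\in G$; the price is that $T'$ must be assembled abstractly by gluing along the $g_i$ rather than identified with the (possibly self-overlapping) union of arc-images inside $T$. A secondary technical point, requiring only care and not ideas, is checking that the interval components receive their endpoints on $\partial T$ and that the gluing produces a bona fide $1$-manifold; this is automatic in dimension one, where a finite union of arcs has no branching. Cocompactness enters exactly once, and essentially, to reduce the cover to a finite family so that $T'$ is of finite type.
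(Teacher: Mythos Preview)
Your proposal is correct and follows essentially the same route as the paper: invariance of tautness under Haefliger equivalence for ``$\Leftarrow$'' (the paper simply asserts this is ``easily verified'', while you sketch the arc-by-arc transport), and for ``$\Rightarrow$'' the same construction of abstractly gluing the arcs of finitely many loops and boundary chains via the connecting $g_i$'s into circles and compact intervals, then invoking Example~3. Your identification of the key point --- building $T'$ abstractly so that every transition $FF\mun$ is literally a restriction of some $g_i$ or of $1_T$ --- is exactly the device the paper uses.
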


\begin{proof} One first easily verifies that tautness is invariant by Haefliger equivalence. "If" follows.

Conversely, given a taut cocompact pseudo-group $(G,T)$~, by cocompactness there
is a finite family $C$ of positive chains, each being
a loop or having extremities on $\partial T$~, such that every orbit of $G$ meets on at least one of them.

Consider one of these chains $c=([t_i,t'_i])$ ($0\le i\le\ell(c)$) which is not a loop:
its origin $t_0$ and extremity $t'_{\ell(c)}$ lie on $\partial T$~.
For every $1\le i\le\ell(c)$~, one has $t_{i}=g_i(t'_{i-1})$ for some $g_i\in G$
whose domain and image are small.
Let $$U_0:=[t_0,t'_0]\cup Dom(g_{1})$$ $$U_{\ell(c)}:=Im(g_{\ell(c)})\cup[t_{\ell(c)},t'_{\ell(c)}]$$ and for each
  $1\le i\le{\ell(c)}-1$~, let $$U_i:=Im(g_i)\cup[t_i,t'_i]\cup Dom(g_{i+1})$$

One makes an abstract copy $U'_i$ of each $U_i$~. Write
 $f_{c,i}:U'_i\to U_i$ for the identity.
 These abstract copies are
glued together by means of the $g_i$'s into a single compact segment $T'_c$~.
Thus, $T'_c$ has an atlas of maps which are
 local transformations $f_{c,i}$ ($0\le i\le\ell(c)$) from $T'_c$ to $T$~, such that every
change of maps $g_i=f_{c,i}f_{c,i-1}\mun$ belongs to $G$~. The images of the $f_{c,i}$'s cover the chain $c$~.

In the same way, for every $c\in C$ which is a loop, one makes a circle
 $T'_c$ together with an atlas $f_{c,i}$ ($0\le i\le\ell(c)$) of maps
 which are local transformations from $T'_c$ to $T$~, such that every
change of maps belongs to $G$~. The images of the maps cover the chain $c$~.

Let $T'$ be the disjoint union of the $T'_c$'s, for $c\in C$~.
By the example 3 above after the definition \ref{equivalence_dfn},
$G$ is Haefliger-equivalent to the pseudo-group $F^*(G)$ of local transformations
of $T'$~.
\end{proof}

In case $(G,T)$ is connected, one can be more precise (left as an exercise):
\begin{pro}\label{connected_taut_pro} Let $(G,T)$ be
a connected, cocompact pseudo-group of dimension 1. Then, $(G,T)$
 is taut if and only if it is Haefliger-equivalent to
 some pseudo-group $(G',T')$ such that $T'$ is either a finite disjoint
  union of compact intervals,
 or a single circle.
\end{pro}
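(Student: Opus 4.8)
The plan is to build on the previous proposition, which already gives a Haefliger equivalence to some $(G',T')$ with $T'$ a finite disjoint union of compact intervals and circles, and then to refine the construction using connectedness to merge the pieces into a single connected transversal of the required type. First I would recall that, by Proposition \ref{taut_pro}, connectedness and tautness of $(G,T)$ give a $(G',T')$ where $T'=\bigsqcup_c T'_c$ is a finite disjoint union of segments and circles, with the $f_{c,i}$ providing local transformations to $T$ whose images cover the chains in $C$. The key extra input is that $(G,T)$ is connected: any two points of $T$ are joined by a finite sequence of points, consecutive ones lying in the same orbit or the same connected component. Since $T'$ is Haefliger-equivalent to $T$, this connectedness transfers, so the orbit space $T'/G'$ is connected.

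Next I would separate two cases according to whether $C$ contains a chain with boundary extremities. If \emph{some} $T'_c$ is a segment, I would show that one can arrange \emph{all} the pieces to be segments, yielding a finite disjoint union of compact intervals. The idea is that whenever two pieces $T'_c$ and $T'_{c'}$ carry orbits that meet — which connectedness forces across the whole family — I can use a local transformation in $G$ realizing the orbit identification to graft a short arc bridging a point of one piece to a point of the other, enlarging the atlas $F$ by one more chart while keeping conditions (1)–(3) of Example 3 after Definition \ref{equivalence_dfn} intact. A circle $T'_c$ can be \emph{cut} into a segment by removing a single point of its preimage and reindexing the atlas, provided that point's orbit is still met elsewhere; connectedness and the presence of at least one segment guarantee a safe choice. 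This reduces the circle-containing situation to the all-segments situation, giving the finite disjoint union of compact intervals.

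The remaining case is that \emph{every} chain in $C$ is a loop, so every $T'_c$ is a circle and $\partial T=\emptyset$ up to equivalence. Here I would show the equivalence can be taken to a \emph{single} circle. Connectedness of $T'/G'$ means the circles are linked through orbit identifications; picking an identification between a point of $T'_c$ and a point of $T'_{c'}$, realized by some $g\in G$, I would cut each of the two circles at the relevant point and splice them along $g$ into a single longer circle, again enlarging the atlas by one chart and checking (1)–(3). Iterating over the finitely many components collapses all circles into one, producing the single circle $T'$. Throughout, one also checks the reverse implication — that a $(G',T')$ of this form forces tautness and connectedness — which is immediate since a compact interval furnishes a positive chain with boundary extremities through each of its points and a circle furnishes a positive loop, while either shape is connected.

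The main obstacle I anticipate is the bookkeeping in the splicing/cutting step: one must verify that after grafting a bridging arc or cutting a circle, the enlarged family $F$ of local transformations still satisfies $FF^{-1}\subset G$, covers $T'$ by domains, and has $G$-exhaustive union of images, so that Example 3 continues to apply and the new $(F^*(G),T')$ stays Haefliger-equivalent to $(G,T)$. The delicate point is ensuring the cut points are chosen so that no orbit is lost from the exhaustive family — precisely where connectedness is used — and that the gluing data remain genuine elements of $G$ with relatively well-behaved small domains. This is exactly the kind of verification the authors defer by marking the statement ``left as an exercise,'' so I would present the construction and relegate the routine atlas checks.
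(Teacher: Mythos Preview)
The paper leaves this proposition as an exercise, so there is no proof to compare against; your plan---refine the construction of Proposition~\ref{taut_pro} using connectedness and merge the pieces of $T'$---is the natural one, and your Case~2 (splicing two circles sharing an orbit into a single circle, iterated) is correct.

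Case~1 has a genuine gap. Local transformations carry boundary to boundary (the remark $Dom(\gamma)\cap\partial T=\gamma^{-1}(\partial T')$ right after the definition), so in any Haefliger equivalence every endpoint of a compact interval component of $T'$ must lie on a \emph{boundary orbit} of $(G,T)$. A circle $T'_c$ arose from a positive \emph{loop} and therefore meets only interior orbits; ``removing a single point'' leaves either an open interval (not compact) or a compact arc whose two endpoints are interior orbits---neither is admissible as a component of $T'$. Your ``bridging arc'' has the same defect if it is a new component of $T'$, and produces a branch point (hence not a $1$-manifold) if it is glued in topologically. The remedy is simply to reuse your Case~2 splice here: given a circle $C$ and a segment $I=[a,b]$ sharing an orbit at $p\in C$ and $q\in Int(I)$, cut $I$ at $q$ and $C$ at $p$ and insert the opened circle into the segment, obtaining a single compact interval still running from $a\in\partial T$ to $b\in\partial T$. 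Connectedness guarantees that every circle can eventually be absorbed into some segment in this way, leaving a finite disjoint union of compact intervals. Equivalently---and perhaps more transparently---do the splicing on the family $C$ of positive chains in $T$ \emph{before} invoking Proposition~\ref{taut_pro}: when $\partial T\neq\emptyset$, insert each positive loop into some positive chain with boundary extremities; when $\partial T=\emptyset$, concatenate all loops into one. (Minor point: in the converse you need only check tautness; connectedness is a hypothesis on $(G,T)$, not part of the ``if and only if''.)
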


Our last lemma has no relation to tautness. For a compactly generated
pseudo-group of dimension one, one can give a more precise form to
the generating system defining compact generation:

\begin{lem}\label{intervals_lem}
Let $(G,T)$ be a compactly generated pseudo-group of dimension $1$~.
Then~:
\begin{enumerate}
\item There is a $G$-exhaustive, open, relatively compact $T'\subset T$
 which
 has finitely many connected components;

\item For every $T'$ as above, $G\vert T'$
admits a finite set
of $G$-extendable
generators whose domains and images
 are intervals.
\end{enumerate}
\end{lem}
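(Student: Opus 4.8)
The plan is to treat the two assertions in turn, using only the definition of compact generation (Definition \ref{CG_dfn}), the freedom in the choice of the relatively compact transversal afforded by Lemma \ref{transversal_change_lem}, and the elementary fact that the connected components of an open subset of a $1$-manifold are intervals or circles.

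For (1), I would start from any relatively compact, exhaustive, open $T_0\subset T$, which exists by Definition \ref{CG_dfn}. Its closure $\Cl(T_0)$ is compact; since $T$ is a $1$-manifold, I can cover $\Cl(T_0)$ by finitely many relatively compact open intervals $I_1,\dots,I_k$ taken from coordinate charts, and set $T':=I_1\cup\dots\cup I_k$. Then $T'$ is open, relatively compact as a finite union of relatively compact sets, and exhaustive because it contains $T_0$; and being a union of $k$ connected sets it has at most $k$ connected components. This settles (1).

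For (2), fix $T'$ as in the statement. By Lemma \ref{transversal_change_lem} there are finitely many $G$-extendable elements $\gamma_1,\dots,\gamma_n$ generating $G\vert T'$; choose for each an extension $\bar\gamma_i\in G$ with $\Cl(Dom(\gamma_i))$ compact and contained in $Dom(\bar\gamma_i)$. The idea is to replace each $\gamma_i$ by finitely many restrictions of $\bar\gamma_i$ to intervals. Indeed, any restriction $\bar\gamma_i\vert J$ to an open interval $J$ with $\Cl(J)\subset Dom(\bar\gamma_i)$ is again $G$-extendable (via $\bar\gamma_i$), has interval domain $J$ and interval image $\bar\gamma_i(J)$, being the homeomorphic image of an interval, and it lies in $G\vert T'$ as soon as $J\subset T'$ and $\bar\gamma_i(J)\subset T'$; moreover it then coincides with $\gamma_i$ on $J\cap Dom(\gamma_i)$. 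Hence, if I can cover $Dom(\gamma_i)$ by finitely many such intervals, the third (gluing) axiom of a pseudo-group shows that $\gamma_i$ lies in the pseudo-group generated by the corresponding finite family; collecting these families over $i=1,\dots,n$ then produces the desired finite generating set of $G\vert T'$.

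It remains to produce the finite cover, and this is where the main difficulty lies, since $Dom(\gamma_i)$ may a priori have infinitely many components accumulating on a point of $\Cl(Dom(\gamma_i))$. The remedy is to cover the \emph{compact} set $\Cl(Dom(\gamma_i))$ rather than the open set $Dom(\gamma_i)$. For each $t\in\Cl(Dom(\gamma_i))\subset Dom(\bar\gamma_i)$ I choose a small interval $J_t\ni t$ with $\Cl(J_t)\subset Dom(\bar\gamma_i)$; when $t$ and $\bar\gamma_i(t)$ are interior to $T'$ (in particular whenever $t\in Dom(\gamma_i)$), continuity lets me take $J_t\subset T'$ and $\bar\gamma_i(J_t)\subset T'$, so $\bar\gamma_i\vert J_t\in G\vert T'$. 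A single such $J_t$ placed at an accumulation point absorbs all the nearby components of $Dom(\gamma_i)$ at once, which is what defeats the infinitely-many-components pathology. The delicate points are those $t$ for which $t$ or $\bar\gamma_i(t)$ lies on the topological boundary of $T'$ in $T$, a discrete set since $T'$ has finitely many components: there $T'$ is one-sided, but because $Dom(\gamma_i)\subset T'$, $Im(\gamma_i)\subset T'$, and $\bar\gamma_i$ is an orientation-preserving homeomorphism near $t$, a sufficiently small one-sided interval $J_t$ with endpoint $t$ still satisfies $J_t\subset T'$ and $\bar\gamma_i(J_t)\subset T'$. The family $\{J_t\}$ covers $\Cl(Dom(\gamma_i))$, so a finite subfamily already covers $Dom(\gamma_i)$, giving the required finite cover. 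Checking that these one-sided intervals land on the correct side of $T'$ at its boundary points is the only step demanding genuine care; everything else is formal.
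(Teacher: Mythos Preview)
Your proof of (1) is fine and close in spirit to the paper's, which instead takes a compact $K\subset T$ meeting every orbit, observes that $K$ meets only finitely many components of $T$, and chooses one relatively compact connected open set in each.

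For (2) your route differs from the paper's. The paper bypasses any boundary case analysis: having fixed $\bar g_i$, it chooses an open $U_i\subset Dom(\bar g_i)$ with \emph{finitely many components} containing $Dom(g_i)$; since $T'$ also has finitely many components, so does $U_i\cap T'$, and one simply restricts $\bar g_i$ to each component (splitting any circle into two arcs). This is slicker, though as written it does not verify that the images $\bar g_i(I_j)$ lie in $T'$; the obvious fix is to intersect further with $\bar g_i^{-1}(T')$. Your argument is more explicit on that point, and your orientation-preserving reasoning showing that the one-sided interval lands on the correct side of $T'$ at both ends is sound.

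There is, however, a small gap in your compactness step. At a bad point $t$ (one where $t$ or $\bar\gamma_i(t)$ lies on the topological boundary of $T'$) your one-sided $J_t$ must be contained in $T'$, hence cannot contain $t$ itself since $t\notin T'$. Thus the family $\{J_t\}$ does \emph{not} cover $\Cl(Dom(\gamma_i))$ at these points, and you cannot invoke compactness directly. The repair is easy because the bad points form a finite set $B\subset\partial T'\cup\bar\gamma_i^{-1}(\partial T')$: first take the finitely many one-sided $J_t$ for $t\in B$; their union together with small two-sided neighborhoods of the points of $B$ covers an open neighborhood $V$ of $B$; then $\Cl(Dom(\gamma_i))\setminus V$ is compact and consists of good points, so finitely many good $J_t$ suffice there.
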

\begin{proof} (1) The pseudo-group $G$~, being compactly generated, is in particular cocompact: there is a compact $K\subset T$ meeting every orbit.
Being compact, $K$ meets only finitely many connected components $T_i$ of $T$~.
For each $i$~, let $T'_i\subset T_i$ be relatively compact, open, connected,
and contain $K\cap T_i$~. Then, $T':=\cup_iT'_i$ is $G$-exhaustive, open,
relatively compact, and has finitely many connected components.

(2) By the lemma \ref{transversal_change_lem}, $G\vert T'$ admits a finite set $(g_i)$ $(i=1,\dots,p)$
of $G$-extendable
generators. For each $1\le i\le p$~, let $\bar g_i$ be a $G$-extension of $g_i$~. Let $U_i\subset Dom(\bar g_i)$ be open, relatively compact, contain $Dom(g_i)$~, and have
finitely many connected components. Then, $U_i\cap T'$ has finitely many connected components. Each of these components is either an interval or a circle.
In the second case, we cover this circle by two open intervals.
We get a cover of $U_i\cap T'$ by a finite family $(I_{j})$ ($j\in J_i$)
of intervals open and relatively compact in $Dom(\bar g_i)$ .
 The finite family $(\bar g_i\vert I_j)$ $(1\le i\le p, j\in J_i)$
is $G$-extendable and generates $G\vert T'$~.
\end{proof}

\section{Proof of theorem A and of proposition \ref{reduction_pro}}\label{A_section}
\def\RR{{\mathcal R}}

\subsection{Proof of theorem A}
We are given a taut, compactly generated pseudo-group
$(G,T)$ of dimension $1$ and class $C^r$, $0\le r\le\infty$, or $PL$ . We have to realize $G$ as the holonomy pseudo-group of some
foliated compact $3$-manifold. By proposition \ref{taut_pro}, we can assume that $T$
is compact: a finite disjoint union of compact intervals and circles.
By lemma \ref{intervals_lem} applied to $T'=T$~, the pseudo-group
$G$ admits a finite system $g_1$~,\dots, $g_p$
 of $G$-extendable generators whose domains and images
are intervals.

The proof uses Morse-singular foliations. It
would be natural to define them as the Haefliger structures whose
singularities are quadratic, but this would lead to irrelevant technicalities.
A simpler concept will do.

\begin{dfn}\label{morse_dfn}
 A \emph{Morse foliation} $\F$ on a smooth $n$-manifold $M$ is a foliation
of codimension one and class $C^{\infty,r}$
on the complement of finitely many singular points, such that on
some open neighborhood of each, $\F$ is
conjugate to the level hypersurfaces of some nondegenerate quadratic form
on some neighborhood of $0$ in $\R^n$~. The conjugation must be $C^0$~;
it must be smooth except maybe at the singular point.
\end{dfn} 

We write $Sing(\F)\subset M$ for the finite set of singularities. Note that $\F$
is smooth on some neighborhood of $Sing(\F)$, minus $Sing(\F)$.
The holonomy pseudo-group of $\F$ is defined,
 on any exhaustive transversal
disjoint from the singularities, as the
holonomy pseudo-group of the regular foliation $\F\vert(M\setminus Sing(\F))$~.

  We shall first realize $(G,T)$ as the holonomy pseudo-group of
a Morse foliation on a compact $3$-manifold. Then, compact generation
will allow us to perform a surgery on this manifold and
regularize the foliation, without changing its transverse structure.

\smallbreak
To fix ideas, at first we assume that $T$ is without boundary: that is, a
finite disjoint union
of circles.

Let $M_0:=\S^2\times T$
and let $\F_0$ be the
foliation of $M_0$ by $2$-spheres: its holonomy pseudo-group on the exhaustive transversal $\ast\times T$
is the trivial pseudo-group. Write $\pr_2:M_0\to T$ for the second projection.

For every $1\le i\le p$~,
write $(u_i,u'_i)\subset T$ for the open interval that is the domain of $g_i$~,
 and write $(v_i,v'_i)$ the image of $g_i$~. Fix some extension $\bar g_i\in G$~.
 
Choose two embeddings $e_i:\D^3\to M_0$ and $f_i:\D^3\to M_0$
such that
\begin{enumerate}
\item $e_i(\D^3)$ and $f_i(\D^3)$ are disjoint from each other and from $\ast\times T$~;
\item $\pr_2(e_i(\D^3))=[u_i,u'_i]$ and $\pr_2(f_i(\D^3))=[v_i,v'_i]$~;
\item $e_i^*\F_0=f_i^*\F_0$ is the trivial foliation $dx_3\vert\D^3$~;
\item $\pr_2\circ f_i=\bar g_i\circ\pr_2\circ e_i$~.
\end{enumerate}

We perform on $M_0$ an elementary surgery of index $1$ by cutting
the interiors of $e_i(\D^3)$ and of $f_i(\D^3)$, and by pasting their boundary 2-spheres.
The points $e_i(x)$ and $f_i(x)$ are pasted, for every $x\in\partial\D^3$~.

We perform such a surgery on $M_0$ for every $1\le i\le p$~,
choosing of course the embeddings $e_i$~, $f_i$ two by two disjoint.
Let $M_1$ be the resulting manifold.

Obviously, $\F_0$ induces on $M_1$ a Morse foliation $\F_1$~, with
$2p$ singularities, one at every point $s_i:=e_i(0,0,-1)=f_i(0,0,-1)$~,
of Morse index $1$~;
and one at every point $s'_i:=e_i(0,0,+1)=f_i(0,0,+1)$~, of Morse
index $2$~. It is easy and standard to endow $M_1$ with a smooth structure,
such that $\F_1$ is of class $C^{\infty,r}$, and smooth in a neighborhood
of the singularities, minus the singularities.

By (4), the holonomy of $\F_1$ on the transversal $*\times T\cong T$ is
generated by the local transformations $g_i$~. That is, it coincides with $G$~.

Up to now, we have not used fully the fact that $G$ is compactly generated. Now, we
point a consequence of this fact, which is
actually its geometric translation.

Consider in general
 some Morse foliation $\X$ on some $3$-manifold $X$~, and some singularity
$s$ of index $1$~. On some neighborhood of $s$~, the Morse foliation $\X$
admits the first integral  $Q:=-x_0^2+x_1^2+x_2^2$ in some continuous local 
coordinates $x_0$~, $x_1$~, $x_2$, smooth except maybe at the singularity. The two components of the singular cone at $s$~,
namely $Q\mun(0)\cap\{x_0<0\}$ and $Q\mun(0)\cap
\{x_0>0\}$~, may either belong to the same leaf of the regular foliation
$\X\vert(X\setminus Sing(\X))$~, or not. If they do, then there is a loop
$\lambda:[0,1]\to X$ such that
\begin{itemize}
\item $\lambda(0)=\lambda(1)=s$~;
\item $\lambda$ is tangential to $\X$~;
\item $\lambda(t)\notin Sing(\X)$ for every $0<t<1$~;
\item $x_0(\lambda(t))\le 0$
(resp. $\ge 0$)
for every $t$ close enough to $0$ (resp. $1$).
\end{itemize}

Such a loop has a holonomy germ $h(\lambda)$ on the pseudo-transversal arc $x_0=x_1=0$~,
$x_2\ge 0$~. This is the germ at $0$ of some homeomorphism of the nonnegative
half-line.
\begin{dfn}\label{levitt_dfn} If moreover 
 the holonomy $h(\lambda)$ is the identity,
then we call $\lambda$ a \emph{Levitt loop}
for $\X$ at $s$~.
\end{dfn}

In the same way,
at every singularity of index $2$~, the Morse foliation $\X$ admits the first integral
$Q':=x_0^2-x_1^2-x_2^2$ in some local coordinates $x_0$~, $x_1$~, $x_2$~. The notion of a Levitt loop is defined
symmetrically by reversing
the transverse orientation of $\X$~.

\begin{lem}\label{levitt_lem}
The Morse foliation $\F_1$ admits a Levitt loop at every singularity.
\end{lem}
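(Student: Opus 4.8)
The plan is to treat the two families of singularities symmetrically, so I focus on an index-$1$ singularity $s_i=e_i(0,0,-1)=f_i(0,0,-1)$; the index-$2$ case at $s'_i=e_i(0,0,+1)=f_i(0,0,+1)$ will be identical after reversing the transverse orientation. First I would write down the local model of the surgery near $s_i$ in order to identify the two nappes of the singular cone. Since $\pr_2\circ e_i$ and $\pr_2\circ f_i$ depend only on the vertical coordinate $x_3$ (condition (3)), the ball $e_i(\D^3)$ meets the leaf $\S^2\times\{t\}$ in a horizontal $2$-disc for $t\in(u_i,u'_i)$ and only in the south pole $s_i$ for $t=u_i$; likewise $f_i(\D^3)$ touches $\S^2\times\{v_i\}$ at $s_i$ alone, where $v_i=\bar g_i(u_i)$ by condition (4) at $x_3=-1$. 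Consequently the singular leaf through $s_i$ is the wedge $(\S^2\times\{u_i\})\vee_{s_i}(\S^2\times\{v_i\})$, and the two nappes of the cone $Q^{-1}(0)\setminus\{s_i\}$ are, near $s_i$, a punctured neighbourhood of $s_i$ in $\S^2\times\{u_i\}$ (the side $x_0<0$) and in $\S^2\times\{v_i\}$ (the side $x_0>0$).

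Next I would show that these two nappes lie on one and the same leaf of $\F_1$, and this is where compact generation is used. By lemma \ref{intervals_lem} the $g_1,\dots,g_p$ generate all of $G$, so the extension $\bar g_i\in G$ factors, in a neighbourhood of $u_i$, as a word $\sigma_\ell\cdots\sigma_1$ in the $g_j^{\pm1}$ (the description of generated pseudo-groups in paragraph \ref{pghe_sbs}); note that here we genuinely need the \emph{extension} $\bar g_i$, since $u_i$ is an endpoint of $Dom(g_i)$. As the holonomy of $\F_1$ on $\ast\times T$ is precisely $G$, this word is realized by a tangential path $\mu$, a concatenation of fences through the surgery necks, running from $(\ast,u_i)$ to $(\ast,v_i)$; in particular these two points lie on a common leaf. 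Because the balls avoid $\ast\times T$ and are pairwise disjoint, $(\ast,u_i)$ is joined to the $x_0<0$ nappe inside $\S^2\times\{u_i\}$ (a sphere minus finitely many marked points and surgery discs is connected), and symmetrically for $v_i$; choosing $\mu$ in general position so as to meet no singularity, I obtain a tangential loop $\lambda$ based at $s_i$ that leaves along the $x_0<0$ nappe, follows $\mu$, and returns along the $x_0>0$ nappe. This $\lambda$ satisfies all the defining conditions of a Levitt loop except possibly the triviality of its holonomy.

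It then remains to check that the holonomy germ $h(\lambda)$ on the pseudo-transversal $\{x_0=x_1=0,\ x_2\ge0\}$ is the identity, which I regard as the crux. On this arc one has $Q=x_2^2\ge0$, so the point of parameter $\epsilon$ lies on the neck of a single regular leaf $\Sigma_\epsilon$, namely the sphere obtained by regluing $\S^2\times\{c\}$ to $\S^2\times\{\bar g_i(c)\}$ through the $i$-th neck for a suitable $c>u_i$. By definition the pushed-off loop stays on $\Sigma_\epsilon$; and near $s_i$ the leaf $\Sigma_\epsilon$ is the one-sheeted hyperboloid $Q=\epsilon^2$, which meets the arc in the \emph{single} point of parameter $\epsilon$. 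The pushoff does return to a neighbourhood of $s_i$, since $\lambda$ ends there along the $x_0>0$ nappe and the pushoff of $\mu$ merely carries $(\ast,c)$ to $(\ast,\bar g_i(c))$, both of which lie on $\Sigma_\epsilon$; hence the pushed-off loop closes up at that unique point, and $h(\lambda)(\epsilon)=\epsilon$. Thus $\lambda$ is a Levitt loop.

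The main obstacles I anticipate are essentially bookkeeping: matching the merely $C^0$ Morse chart at $s_i$ to the surgery so that the two nappes and the two sheets correspond correctly to the sign of $x_0$ and to whether $t$ is below or above $u_i$, and keeping the connecting path $\mu$ disjoint from the other $2p-1$ singularities while certifying that each nappe really is joined within its leaf to $(\ast,u_i)$, respectively $(\ast,v_i)$. Beyond the orbit identification $v_i=\bar g_i(u_i)$ coming from compact generation, and the observation that $\Sigma_\epsilon$ pierces the pseudo-transversal in a single point, no new idea seems to be required; the index-$2$ computation at $s'_i$ is the mirror image, with $v'_i=\bar g_i(u'_i)$ and the neck now lying on the side $t<u'_i$.
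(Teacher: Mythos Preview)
Your proposal is correct and follows essentially the same route as the paper: build the loop as $a^{-1}\mu b$, where $a,b$ are tangential arcs in $\S^2\times\{u_i\}$ and $\S^2\times\{v_i\}$ joining the two nappes of the singular cone to $\ast\times T$, and $\mu$ is a tangential path realizing the germ of the extension $\bar g_i$ at $u_i$ (which exists precisely because $\bar g_i\in G$ and $G$ is the holonomy of $\F_1$ on $\ast\times T$). The only difference is cosmetic, in the verification that $h(\lambda)=\id$: the paper computes directly from conditions (2)--(4) that the path $ab^{-1}$ through the $i$-th neck has holonomy germ $g_i\vert[u_i,u_i+\epsilon)$, so that $h(a^{-1}\mu b)$ is a conjugate of $\bar g_i\circ g_i^{-1}=\id$; your argument via the connectedness of the one-sheeted hyperboloid $Q=\epsilon^2$ and its unique intersection with the pseudo-transversal is the same cancellation packaged geometrically.
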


\begin{proof} Consider e.g. a singularity $s_i$ of index $1$~.
In $M_0$~, let $a$ be a path from the point $(*,u_i)$ to the point $e_i(0,0,-1)$ in the
sphere $\S^2\times u_i$~; and let $b$ be a path from $(*,v_i)$ to $f_i(0,0,-1)$ in the
sphere $\S^2\times v_i$~. Then, in $M_1$~,
 the path $ab\mun$ is tangential
to $\F_1$ and passes through $s_i$~.
Obviously, the holonomy $h(ab\mun)$ of $\F_1$
 on $*\times T\cong T$
along this path is well-defined on the right-hand side of $u_i$~.
That is, $h(ab\mun)$ is a germ of homeomorphism of $T$ from
some interval $[u_i,u_i+\epsilon)\subset T$ onto some interval
$[v_i,v_i+\eta)\subset T$~.
 By properties (2) through (4) above,$$h(ab\mun)=g_i\vert[u_i,u_i+\epsilon).$$

On the other hand, recall that $\bar g_i\in G$~.
Since $G$ is the holonomy pseudo-group of $\F_1$ on $*\times T$~, there is in $M\setminus
Sing(\F_1)$~, a path $c$ from $u_i$ to $v_i$~, tangential
to $\F_1$~, and whose holonomy on $*\times T$
is the germ of $\bar g_i$ at $u_i$~. Then, $\lambda:=a\mun cb$
is a Levitt loop at $s_i$~.
\end{proof}

To simplify the argument in the rest of the construction, it is convenient
 (although in fact not necessary) that $\F_1$ admit
at each singularity a \emph{simple}
Levitt loop.
We can get this extra property as follows. Let $s$ be a singularity of $\F_1$~, let
$\lambda$ be a Levitt loop for $\F_1$ at $s$~, and let
 $L$ be the leaf singular at $s$~, containing
$\lambda$~. After a generic perturbation of $\lambda$ in $L$~,
the loop $\lambda$ is immersed and self-transverse in $L$~.
Let $x$ be a self-intersection point of $\lambda$~. Since $\F_1$ is taut,
there passes through $x$
an embedded transverse circle $C\subset M_1$~, disjoint from $*\times T$~.
 We perform a surgery on $M_1$~,
cutting a small tubular neighborhood $N\cong\D^2\times\S^1$
 of $C$ in $M_1$~, in which $\F_1$ is the foliation by the $\D^2\times t$'s; and we glue
$\Sigma\times\S^1$~, where $\Sigma$ is the compact connected orientable surface of
genus $1$ bounded by one circle, foliated by the $\Sigma\times t$'s.
The holonomy pseudo-group of the foliation $\F_1$ on $*\times T$ is not changed.
After the surgery, there is at $s$ a Levitt loop with one less self-intersection. Of the
two pieces of $\lambda$ that passed through $x$~, now one passes in the new handle and is disjoint from the other.

  After a finite number of such surgeries,
for every $1\le i\le p$~, the Morse foliation $\F_1$
admits at the singularity $s_i$ (resp. $s'_i$) a simple Levitt loop $\lambda_i$
(resp. $\lambda'_i$).

Fix some $1\le i\le p$~. We shall somewhat cancel the pair of
 singularities $s_i$ and $s'_i$
of $\F_1$~,
at the price of a surgery on $M_1$~, without changing the
 transverse structure of $\F_1$~.

First, we use fully the fact that $G$ is taut: there is a
path $p_i:[0,1]\to M_1$ from $p_i(0)=s'_i$ to $p_i(1)=s_i$~, and
 positively transverse to $\F_1$ except at its endpoints.
 
The geometry is as follows (figure  \ref{one_fig}).
\begin{figure}
\includegraphics*[scale=0.45, angle=0]{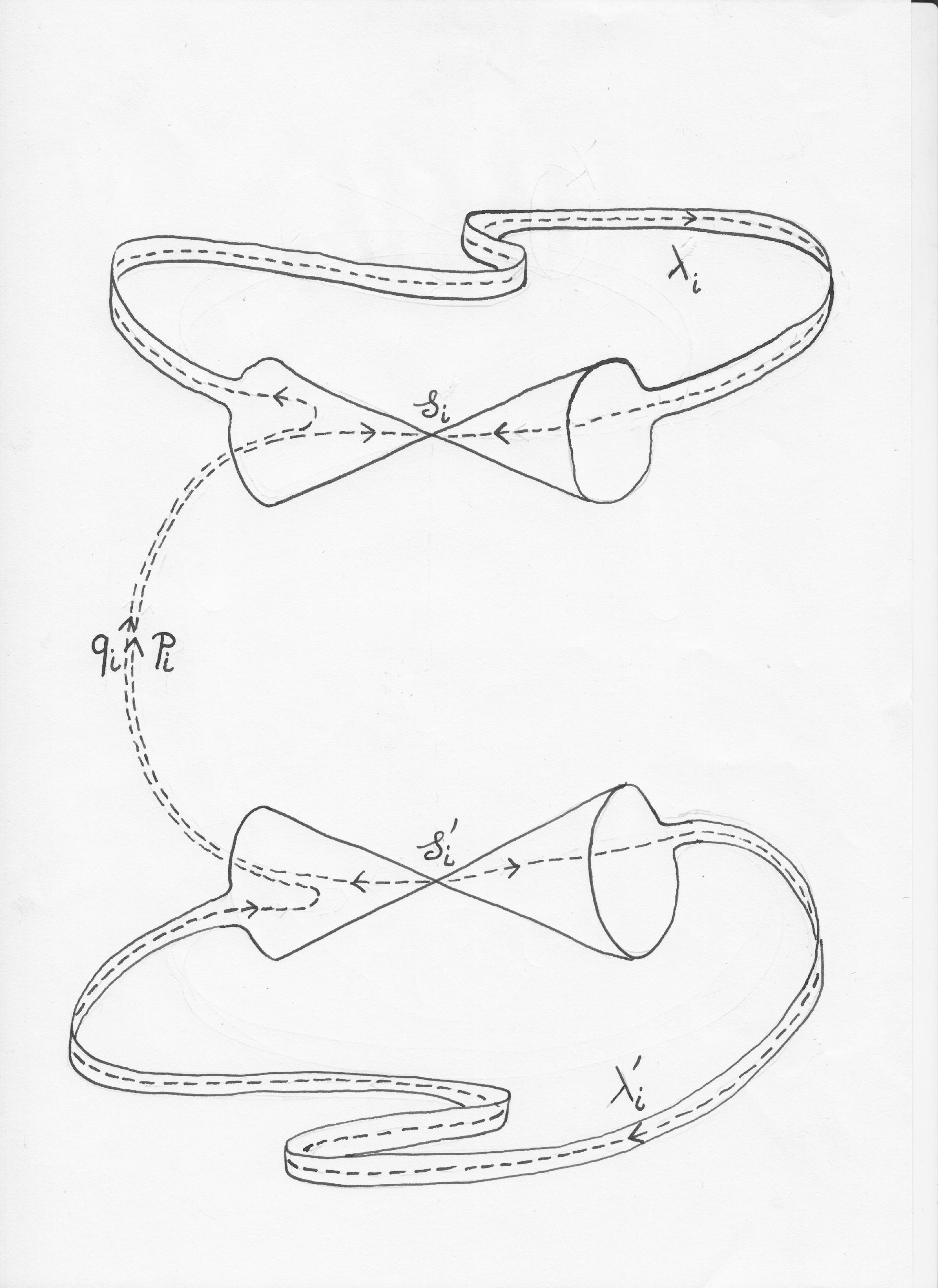}
\caption{}
\label{one_fig}
\end{figure}
 Let $Q(x_1,x_2,x_3)$ be a quadratic form
 of Morse index $1$ with respect to some local system
of coordinates at $s_i$~, which is a local first integral for $\F_1$~.
Then, $p_i$ arrives at $s_i$ by one of the two components of the cone $Q<0$~.
Reversing if necessary the orientation of $\lambda_i$, one can arrange that
$\lambda_i$ quits $s_i$ in the boundary of the same half cone.
  Symmetrically,
  let $Q'(x_1,x_2,x_3)$ be a quadratic form of Morse index $2$ with respect to some local system
of coordinates at $s'_i$~, which is a local first integral for $\F_1$~.
Then,
 $p_i$ quits $s'_i$ by one of the two components
 of the cone $Q'>0$~.
 Reversing if necessary the orientation of $\lambda'_i$, one can arrange that
$\lambda'_i$ arrives at $s'_i$ in the boundary of the same half cone.

We shall perform a surgery on $M_1$~, and modify $\F_1$~, in an arbitrarily small neighborhood of
$\lambda'_i\cup p_i\cup\lambda_i$~,
to cancel the singularities $s_i$~, $s'_i$~, without changing the holonomy pseudo-group
of the foliation.

To this aim, the composed path $\lambda'_ip_i\lambda_i$
(that is, $\lambda'_i$ followed by $p_i$ followed by $\lambda_i$) is homotoped, relatively to its endpoints,
 into some path
$q_i$ also positively transverse to $\F_1$~, except at its endpoints $q_i(0)=s'_i$ and
$q_i(1)=s_i$~. The homotopy consists in pushing the two
tangential Levitt loops to some nearby, positively transverse paths, and
in  rounding the two corners; it is $C^0$-small.

Notice that $p_i$ and
 $q_i$ arrive at $s_i$ by the two opposite components of the cone $Q<0$~.
  Symmetrically,
 $p_i$ and $q_i$ quit $s'_i$ by the two opposite components
 of the cone $Q'>0$~.
 
By construction, for a convenient choice of the parametrization $t\mapsto q_i(t)$,
the transverse path  $q_i$ is \emph{$\F_1$-equivalent} to $p_i$~, that is,
the diffeomorphism $p_i(t)\mapsto q_i(t)$ belongs to the holonomy
pseudo-group of $\F_1$ on the union of the two transversal open arcs $p_i\cup q_i\setminus
\{s_i,s'_i\}$~.

After a small, generic perturbation of $p_i$ and $q_i$
relative to their endpoints $s_i$, $s'_i$, we arrange that $p_i$ and $q_i$ are two embeddings of
the interval into $M_1$~; that they are disjoint,
but at their endpoints; and also that they are disjoint from $p_j$~, $q_j$ for every $j\neq i$~,
and also disjoint from the transversal $*\times T$~.

Recall (definition \ref{morse_dfn}) that $\F_1$ is smooth
in a neighborhood of $s_i$ and $s'_i$ (but maybe at $s_i$ and $s'_i$).
 After a $C^r$-small perturbation
of $\F_1$ in some small neighborhood of $p_i$ and $q_i$, relative
to some small neighborhoods of $s_i$ and $s'_i$, the foliation $\F_1$ is smooth in some
neighborhood of $p_i\cup q_i$ (but maybe at $s_i$ and $s'_i$).
\medbreak
Now, we shall perform on $M_1$ an elementary surgery of index $2$ along every embedded circle
$p_i\cup q_i$ ($1\le i\le p$)
 (figure \ref{two_fig}).
\begin{figure}
\includegraphics*[scale=0.45, angle=0]{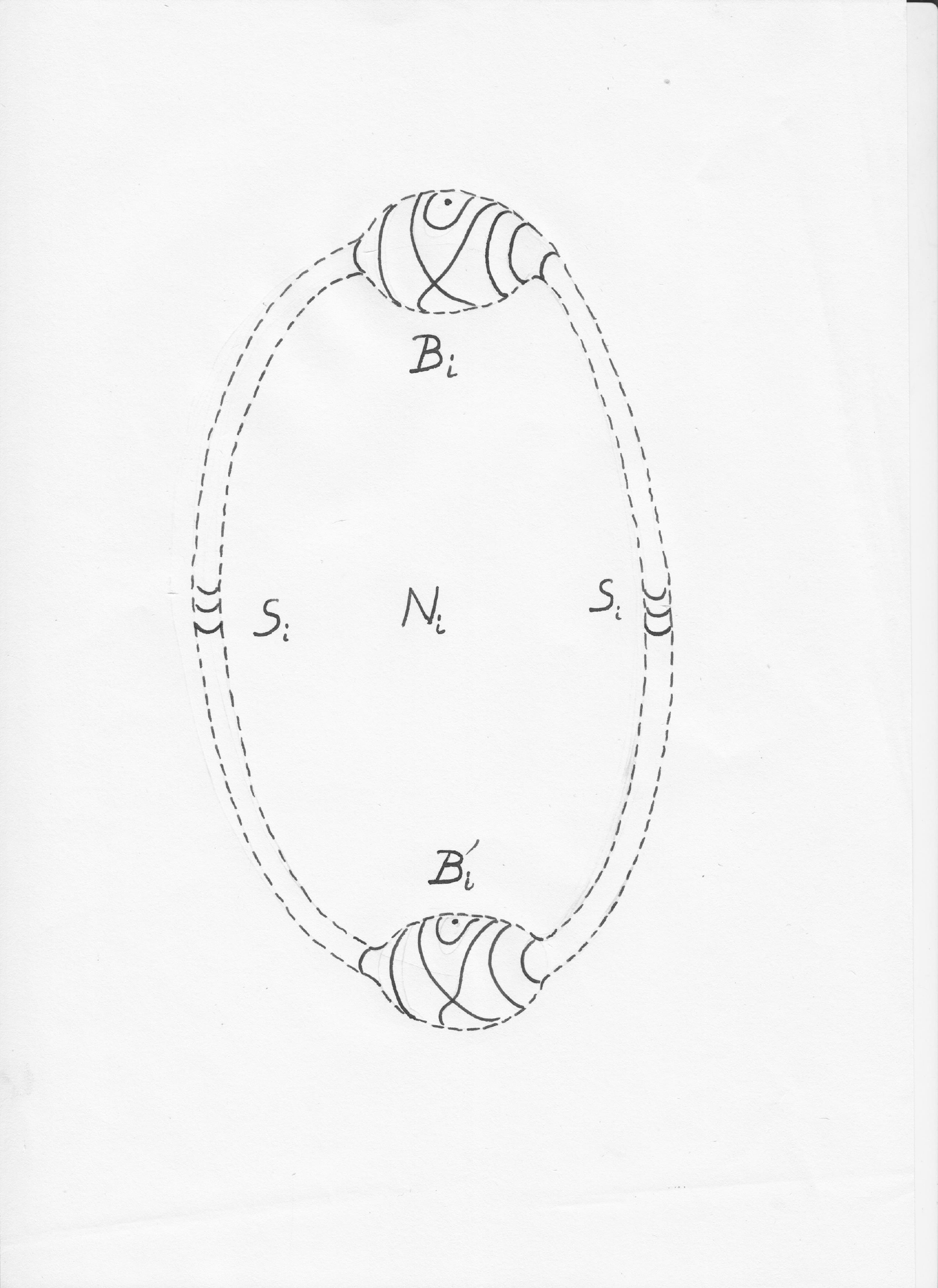}
\caption{}
\label{two_fig}
\end{figure}
  That is, we cut some small tubular neighborhood
$N_i\cong\S^1\times\D^2$ of $p_i\cup q_i$~, and we paste $\D^2\times\S^1$
 (here the choice of the framing is irrelevant). We shall
 obtain
  a closed $3$-manifold
$M$~.
We shall, for a convenient choice of the $N_i$'s~, extend the foliation $\F_1\vert(M_1\setminus\cup_i N_i)$ to $M$~,
as a (regular) foliation, still admitting $*\times T$
as an exhaustive transversal, and whose holonomy pseudo-group on $*\times T$
will still be $G$~.

To this end, first notice that,
by definition \ref{levitt_dfn}, and since $\lambda_i$ (resp. $\lambda'_i$)
is a simple loop, there is some small open neighborhood $U_i$
(resp. $U'_i$) of
$\lambda_i$ (resp. $\lambda'_i$) in $M_1$~, such that
the foliation $\F_1$ admits in $U_i\setminus s_i$ (resp. $U'_i\setminus s'_i$)
 a first integral $F_i$ (resp. $F'_i$)
 whose level sets are connected.
 Precisely, for every $t<F_i(s_i)$ (resp. $t>F'_i(s'_i)$), the level set $F_i\mun(t)$ (resp. $F'_i\mun(t)$) is
 an open disk. For every $t>F_i(s_i)$ (resp. $t<F'_i(s'_i)$), the level set $F_i\mun(t)$ (resp. $F'_i\mun(t)$) is
the connected orientable open surface of genus one with one end.

   Choose a compact $3$-ball $B_i\subset U_i$ containing $s_i$ and
    such that $F_i\vert B_i$ is
 topologically conjugate to a quadratic form $Q$ of signature $-++$~, with three different eigenvalues, on the unit ball. Choose a compact $3$-ball
  $B'_i\subset U'_i$ containing $s'_i$ and such that $F'_i\vert B'_i$ is
 topologically conjugate to a quadratic form $Q'$ of signature $--+$~, with three different eigenvalues,
  on the unit ball.
 Choose some tubular neighborhood $S_i$
  of the circle $p_i\cup q_i$~,
   so thin that $S_i\cap\partial B_i$ (resp.  $S_i\cap\partial B'_i$) is 
 contained in the cone $Q<0$ (resp. $Q'>0$)~, and such that $\F_1\vert S_i$
 is a foliation by disks, except on the intersections
 of $S_i$ with $Q\ge 0$ and with $Q'\le 0$~.
 Define $N_i:=B_i\cup B'_i\cup S_i$~.
 We can arrange that $N_i$ is a smooth solid torus.
 
 Then, after reparametrizing the values of $F_i$ and of $F'_i$~,
 they obviously extend to a function $F''_i$ on $N_i\setminus\{s_i,s'_i\}$ as follows.
 \begin{enumerate}
 \item $F''_i$ is a first integral for $\F_1$ on $N_i\setminus\{s_i,s'_i\}$~;
 \item $F''_i$ coincides with $F_i$ on $B_i$ and with $F'_i$ on $B'_i$~;
 \item  $F''_i\vert\partial N_i$ has exactly eight Morse critical points: two minima
 and two critical points
of index $1$ on $\partial B'_i$~, two critical points
of index $1$ and two maxima
 on $\partial B_i$~;
 \item The values of $F''_i$
at these critical points
are respectively $-2,-2,$ $-1,-1,$ $1,1,$ $2,2$~;
\item The sign of the tangency between $F''_i$
and $\partial N_i$ at each critical point is as follows: the descending gradient of $F''_i$ exits $N_i$
at the four critical points on $\partial B'_i$~, and enters $N_i$ at the four critical points
on $\partial B_i$~;
\item 
One has $F''_i(p_i(u))=F''_i(q_i(u))$ for every $u\in[0,1]$~.
\end{enumerate}

On the other hand, in the handle $H_i:=\D^2\times\S^1$~,
one has the function $h:=x_2(1+y_1^2)$~, where
$\D^2\subset\R^2$ (resp. $\S^1\subset\R^2$) is defined by $x_1^2+x_2^2\le 1$ (resp. $y_1^2+y_2^2=1$).
In $H_i$, the function $h$ has no critical point. On $\partial H_i$,
by (3), (4) and elementary Morse theory, $h\vert\partial H_i$ is smoothly conjugate to
$F''_i\vert\partial N_i$~. We attach $H_i$ to $M\setminus Int(N_i)$ so
that the functions $F''_i$ and $h$ coincide on $\partial N_i\cong\partial H_i$~.
 We extend $\F_1$ inside $H_i$ as the foliation defined by $h$~. 
  By (5), the sign of the tangency between $h$
and $\partial H_i$ at each singularity is the same as the sign of the tangency between $F''_i$ and $\partial N_i$~.
 So, the resulting foliation is regular.

Having done this for every pair of singularity $s_i$~, $s'_i$~, $i=1,\dots, p$~,
we get a regular foliation $\F$ on a closed $3$-manifold $M$~.

We claim that $\F$ admits $*\times T$ as an exhaustive transversal,
and has the same holonomy pseudo-group $G$ as $\F_1$ on $*\times T$~.
Obviously, $\F$ has no leaf contained in any $H_i$~. So, the claim
 amounts to verify the following. Let $\gamma:[0,1]\to M_1$ (resp. $M$) be a path tangential
to $\F_1$ (resp. $\F$) and whose endpoints belong to $M_1\setminus\cup_i Int(N_i)$~. Then, there is a path $\gamma':[0,1]\to M$ (resp. $M_1$)
tangential to $\F$ (resp. $\F_1$) with the same endpoints, and such that the holonomy of $\F_1$ (resp. $\F$) along $\gamma$ is the same as the holonomy of $\F$
(resp. $\F_1$) along $\gamma'$~.

We can assume that $\gamma$ is contained in some
 $N_i$ (resp. $H_i$), with endpoints
on $\partial N_i=\partial H_i$~. Let $t:=F''_i(\gamma(0))=h(\gamma(0))
=F''_i(\gamma(1))=h(\gamma(1))$~.

First, consider the case where $\gamma$ is contained in $N_i$
and tangential to $\F_1$~. There are three subcases, depending on $t$.

 First subcase: $F''_i(s'_i)<t<F''_i(s_i)$~. Then, the level set $F''_i\mun(t)$ is the disjoint union of two disks, so $\gamma$
has the same endpoints as some path $\gamma'$
contained in $\partial F''_i\mun(t)$~, and we are done.

Second subcase: $F''_i(s_i)\le t<2$~. Then,
consider the level set $F_i\mun(t)\subset U_i$~. Obviously, the intersection of
this level set with $U_i\setminus Int(B_i)$ is connected:
a pair of pants when $t<1$, a pair of pants when $t>1$, and it is also connected when $t=1$. So, $\gamma$
has the same endpoints as some path $\gamma'$
contained in this intersection, and we are done. (If the endpoints of $\gamma$
do not lie on the same connected component of the boundary of the annulus
$F_i\mun(t)\cap B_i$~, then the path $\gamma'$ will be close to the Levitt loop $\lambda_i$).

 The third and last subcase
$-2<t\le F''_i(s'_i)$ is symmetric to the second.

Now, consider the second case, where $\gamma$ is contained in $H_i$
and tangential to $\F$~.

In the subcases $-2<t<-1$ and $1<t<2$~,
the level set $h\mun(t)$ is the disjoint union of two disks. Thus, 
 $\gamma(0), \gamma(1)$ are also the endpoints of some path $\gamma'$
contained in $\partial(h\mun(t))$~, and we are done. The like holds for $t=-2, -1, 1$ or $2$.

In the subcase $-1<t<1$, the level set $h\mun(t)$ is an annulus. If 
 $\gamma(0), \gamma(1)$ belong to a same component of $\partial(h\mun(t))$~, we are done.
In the remaining sub-subcase, $\gamma(0), \gamma(1)$ belong to the two different circle components of $\partial(h\mun(t))$~.
By (6), these two circles are also the boundaries of the two disk leaves of $\F_1\vert S_i$ through $p_i(u)$ and $q_i(u)$~,
for some $u\in(0,1)$~. Now, recall that
the diffeomorphism $p_i(u)\mapsto q_i(u)$ between the transversals $p_i$ and $q_i$
belongs to the holonomy pseudo-group of $\F_1$ on $p_i\cup q_i$~. In other words, there is a path $\gamma':[0,1]\to M_1$
tangential to $\F_1$ with the same endpoints as $\gamma$~, and such that the holonomy of $\F$ along $\gamma$ is the same as the holonomy of
 $\F_1$ along $\gamma'$~.

Theorem A is proved in the case of a pseudo-group $(G,T)$ without boundary.
\medbreak
 Now, let us prove theorem A for a taut, compactly generated
  pseudo-group $(G,T)$
 such that $T$ has a boundary. One can assume that
 $(G,T)$ is connected. Thus, one is reduced to the case where $T$
 is a finite disjoint union of compact intervals (proposition \ref{connected_taut_pro}).
 
 The construction is much the same as in the case without boundary. We stress the few differences.
 
 We start from the manifold $M_0:=\S^2\times T$~.
For some of the generators $g_i$~, their domains and images meet the boundary,
i.e. they are semi-open intervals. Consider for example a $g_i$ whose domain
meets the positive boundary $\partial_+T$ (the boundary points where the
 tangent vectors which are positive with respect to the orientation of $T$ ,
exit from $T$). That is, $Dom(g_i)=(u_i,u'_i]$
and $Im(g_i)=(v_i,v'_i]$ and $Dom(g_i)\cap\partial T=u'_i$
and $Im(g_i)\cap\partial T=v'_i$~.

Such a generator will be introduced in the holonomy of the foliation by performing,
somewhat, a \emph{half} elementary surgery of index $1$ on the manifold $M_0$~. Put
for every $n$~:
$$\demi\D^n:=\{(x_1,\dots,x_n)\in\R^n\ \vert\ x_1^2+\dots+x_n^2\le 1,
 x_n\le 0\} $$

 Its boundary splits as the union of $\D^{n-1}$ (the subset defined in $\demi\D^n$ by $x_n=0$) and $\demi\S^{n-1}$
 (the subset defined in $\demi\D^n$ by $x_1^2+\dots+x_n^2=1$)~. Fix some extension $\bar g_i\in G$~.
 
Choose two embeddings $e_i:\demi\D^3\to M_0$ and $f_i:\demi\D^3\to M_0$
such that
\begin{enumerate}
\item $e_i\mun(\partial M_0)=f_i\mun(\partial M_0)
=\D^2$~;
\item $e_i(\demi\D^3)$ and $f_i(\demi\D^3)$ are disjoint from each other and from $T\times\ast$~;
\item $\pr_2(e_i(\demi\D^3))=[u_i,u'_i]$ and $\pr_2(f_i(\demi\D^3))=[v_i,v'_i]$~;
\item $e_i^*\F_0=f_i^*\F_0$ is the trivial foliation $dx_3$ on $\demi\D^3$~;
\item $\pr_2\circ f_i=\bar g_i\circ\pr_2\circ e_i$~.
\end{enumerate}

We perform on $M_0$ a surgery by cutting
 $e_i(\demi\D^3\setminus\demi\S^2)$ and $f_i(\demi\D^3\setminus\demi\S^2)$ and by pasting
 $e_i(\demi\S^2)$ with $f_i(\demi\S^2)$~.
The points $e_i(x)$ and $f_i(x)$ are pasted, for every $x\in\demi\S^2$~.

This surgery produces a single singularity
 $s_i:=e_i(0,0,-1)=f_i(0,0,-1)$~,
of Morse index $1$~.

The case of a generator $g_i$ whose domain meets $\partial_-T$ is of course symmetric.

 After performing a surgery for every generator, we get a resulting compact manifold
 $M_1$~, and a Morse foliation $\F_1$ induced on $M_1$ by $\F_0$~, with some singularities of indices $1$ and $2$~.
The boundary of $M_1$ is the disjoint union of two closed connected
surfaces $\partial_-M_1$~, $\partial_+M_1$~, both
 tangential to $\F_1$. At every point of
 $\partial_-M_1$ (resp. $\partial_+M_1$)~, the tangent vectors positively
transverse to $\F_1$ enter into (resp. exit from) $M_1$~.
The holonomy pseudo-group of $\F_1$ on $*\times T$ coincides with $G$~.

These singularities are eliminated one after the other (\emph{not} by pairs).
Let us eliminate e.g. a singularity $s_i$ of index $1$~.

 On the one hand, by tautness, there is
a path $p_i$~, positively transverse to $\F_1$ but at $s_i$~, from $p_i(0)\in \partial_-M_1$
to $p_i(1)=s_i$~.

On the other hand, by compact generation, $\F_1$
admits a Levitt loop $\lambda_i$
at $s_i$~.
We can arrange that $\lambda_i$ is a simple loop: if it has
a transverse self-intersection $x$~, then, by tautness, through $x$ there 
passes an arc $A$
embedded in $M_1$~, positively transverse to $\F_1$~, and whose endpoints lie on $\partial M_1$~. We
perform a surgery on $M_1$ along $A$~, cutting a small tubular neighborhood
$\cong\D^2\times[0,1]$ and pasting $\Sigma\times[0,1]$ (recall that $\Sigma$
is the disk endowed with a handle: see the paragraph below the proof of lemma \ref{levitt_lem}). The holonomy pseudo-group
of the foliation is not changed. After the surgery, $s_i$ admits a Levitt loop with
one less self-intersection.

The composed path $p_i\lambda_i$ is homotoped to
a path $q_i$ positively transverse to $\F_1$~,
arriving at $s_i$ through the component of the cone $Q<0$
opposite to that of $p_i$~; and $q_i$ is $\F_1$-equivalent to $p_i$~.
During the homotopy, the extremity endpoint $s_i$ is fixed, but the origin endpoint
moves in $\partial_-M_1$~. One arranges that $p_i\cap q_i=s_i$~.

The singularity $s_i$ is eliminated by, somewhat, a \emph{half} elementary surgery of index $2$
along the arc $p_i\cup q_i$ : one cuts a small tubular neighborhood of this arc,
$N_i\cong[0,1]\times\D^2$~, such that
 $N_i\cap\partial M_1\cong\{0,1\}\times\D^2$~; and one pastes $\demi\D^2\times\S^1$ foliated by the restricted function $h\vert(\demi\D^2\times\S^1)$~.
Every arc $(\demi\S^1)\times\theta\in\partial(\demi\D^2)\times\S^1$ is identified with $[0,1]\times\theta\in[0,1]\times\partial\D^2$~.
 The details are just like in the case without boundary.

\subsection{Proof of proposition \ref{reduction_pro}} We are now given
a compact manifold $M$ of dimension $n\ge 4$ endowed with a codimension $1$,
taut
foliation $\F$; and we have to find in $M$ a proper hypersurface
$M'$
transverse to $\F$, such that the inclusion induces a bijection between the spaces of leaves
 $M'/(\F\vert M')$ and  $M/\F$.
 
 To fix ideas, we consider only
the case where $M$ is closed connected and where $\F$ is smooth.

  Endow $M$ with an auxiliary Riemannian metric.
 Write $\rho(\F)$ the infimum of the injectivity radii of the leaves.
 
Fix a positive length $\delta<\rho(\F)/4$ so small that the following \emph{tracking}
property holds
for every leaf $L$ of $\F$ and  for every locally finite, $\delta$-dense
subset $A\subset L$ (in the sense that every point of $L$
is at distance \emph{less than $\delta$} from some point of $A$).
For every shortest geodesic segment $[a,b]$ whose
endpoints lie in $A$ and whose length is less than $\rho(\F)/2$,
there exists in $A$ a finite sequence $a_0=a$, \dots, $a_\ell=b$,
such that $d(a_{i-1},a_{i})<2\delta$ ($1\le i\le\ell$), and that the shortest geodesic segments
$[a_0,a_1]$, \dots, $[a_{\ell-1},a_\ell]$, $[b,a]$ form a simple loop
bounding a $2$-disk embedded in $L$.

Choose a circle $T$ embedded into
 $M$ transversely to $\F$, and such that $T\cap L$ is $\delta$-dense in every leaf $L$.
 
 Write $G$ the holonomy pseudo-group of $\F$
 on $T$. For any $g\in G$ and $r>0$, say that $g$ is \emph{$r$-short}
if for every $t\in Dom(g)$, the distance from $t$ to $g(t)$ in the leaf of
$\F$ through $t$ is less than $r$. At every point of $T$, one has
only finitely many $2\delta$-short germs of local transformations of $T$ belonging
to $G$. Thus, one has
 a finite family $g_1,\dots, g_p\in G$ such that
 every domain $Dom(g_i)$ is an interval $(t_i,t'_i)\subset T$~; and that every $2\delta$-short germ in $G$
 is the germ of some $g_i$ at some point of its domain. Moreover, one can arrange
 that  $g_1$, \dots, $g_p$ are $(\rho(\F)/2)$-short and
 $G$-extendable; and that
 the leaves through the endpoints $t_i, t'_i$
 are two by two distinct. One writes $\hat g_i$ the extension of $g_i$ to the compact interval $[t_i,t'_i]$~.

 The family $g_1$, \dots, $g_p$ generates $G$. Indeed, since $\delta<\rho(\F)/2$, for every leaf $L$, the fundamental groupoid of the pair
 $(L,L\cap T)$ is generated by the geodesic segments of length less than $2\delta$
 whose enpoints lie in $L\cap T$. 
 
 One can arrange moreover that to each $\hat g_i$ is associated a fence (recall
 definition \ref{holonomy_dfn}) $f_i$, such that the
  image rectangles $Im(f_1)=f_1([0,1]\times[t_1,t'_1])$,
   \dots, $Im(f_p)=f_p([0,1]\times[t_p,t'_p])$ are two by two disjoint in $M$. Indeed,
   one first has the fences composed by the tangential shortest geodesic segments
 $[t,\hat g_i(t)]$ ($t\in[t_i,t'_i]$). Since the leaves are of dimension $n-1\ge 3$,
 after a fine enough
  subdivision of the domains of the $g_i$'s into smaller subintervals, and after
 a small generic perturbation of the arcs $[t,g_i(t)]$
  relative to their endpoints, the image rectangles are two by two disjoint. Let
  $s_i:=f_p(1/2,t_i)$ (resp. $s'_i:=f_p(1/2,t'_i)$) be the middle of the lower (resp. upper)
    edge of each fence.

   \medbreak
    The rest of the proof of proposition \ref{reduction_pro} is alike the proof of theorem A, except that
 the construction is made inside $(M,\F)$. The dimension of the construction
    here is $n-1$, rather than $3$ as it was in theorem A, but this does not make
    any substantial difference. Here is a sketch.
    
     Let $K:=T\cup Im(f_1)\cup\dots\cup Im(f_p)$, a $2$-complex embedded into $M$.
 One has in $M$ a small compact neighborhood $\Omega$ of
     $K\setminus\{s_1,\dots,s_p,s'_1,\dots,s'_p\}$
    whose smooth boundary $M_1:=\partial\Omega$ is much like in the proof of theorem $A$.
 Precisely, $s_1,\dots,s_p,s'_1,\dots,s'_p\in M_1$~; and
    $M_1$ is transverse to $\F$ but at each $s_i$ (resp. $s'_i$), where $\F_1:=\F\vert M_1$ has a Morse singularity of index
    $1$ (resp. $n-2$). One has in $M_1$ a circle $*\times T$ close and parallel to $T$,
    transverse to $\F_1$, and meeting
    every leaf of $\F_1$, such that the holonomy of $\F_1$ on
    $\ast\times T\cong T$ is generated by $g_1$, \dots, $g_p$. That is, it coincides
    with $G$.
    
    Now, we use the tracking property to find some convenient Levitt loops. Consider any $s_i$~. In the leaf $L_i$ of $\F$ through $s_i$,
the shortest geodesic segment $[t_i,\hat g_i(t_i)]$ has length less than $\rho(\F)/2$, thus it is tracked by a piecewise geodesic path 
 $a_0=t_i$, \dots, $a_\ell=\hat g_i(t_i)$,
such that $d(a_{i-1},a_{i})<2\delta$ ($1\le i\le\ell$); and
$[a_0,a_1]$, \dots, $[a_{\ell-1},a_\ell]$, $[\hat g_i(t_i),t_i]$ form a simple loop $\lambda_{\rm geod}$
bounding a $2$-disk embedded in $L_i$. Close to $\lambda_{\rm geod}$~, one has a loop $\lambda_K$ in $L_i\cap K$~.
Close to $\lambda_K$~, one has a loop $\lambda_i$ in $L_i\cap M_1$~, passing through $s_i$~. Obviously, $\lambda_i$ is
a Levitt loop for $\F_1$ at $s_i$~.
 If the fences $f_1$~,\dots, $f_n$ have been
 taken close enough to the geodesic ones, and if the neighborhood $\Omega$ of $K$ has been taken thin enough,
then $\lambda_i$ is also simple, and  bounds also a disk $\Delta_i$ embedded in $L_i$~.

 The like holds at every $s'_i$~, and yields a simple Levitt loop
 $\lambda'_i$ bounding a disk $\Delta'_i$ embedded in the leaf of $\F$ through $s'_i$~.
 The leaves of $\F$ through the singularities of $\F_1$ being two by two distinct, the disks are two by two disjoint.

	Like in the proof of theorem $A$, we have in $M_1$ a simple path $p_i$
	from $s'_i$ to $s_i$, positively
	transverse to $\F_1$ but at its endpoints. The composed path $\lambda'_ip_i
	\lambda_i$ is perturbated in $M_1$, relative to his endpoints,
	into some simple path $q_i$ transverse to $\F_1$ and disjoint from $p_i$,
	 but at its endpoints. The union of the
	disks $\Delta_i$ and $\Delta'_i$ with a thin strip is perturbated into
	 a $2$-disk $\Delta''_i$ (rather obvious on figure \ref{one_fig}) such that
	 \begin{itemize}
	 \item $\Delta''_i$ is embedded into $M$;
	  \item $\partial\Delta''_i=\Delta''_i\cap M_1=p_i\cup q_i$;
	  \item $\Delta''_i$ is transverse to $\F$ and $\F\vert\Delta''_i$
	  is the foliation of the $2$-disk by parallel straight segments.
	  \end{itemize}
	  
	  The hypersurface $M'\subset M$ is built from $M_1$
	   by cutting, for every $1\le i\le p$,
	  a small tubular neighborhood of $p_i\cup q_i$ in $M_1$, diffeomorphic
	  with $\S^1\times\D^{n-2}$,
	  and pasting the boundary of the sphere bundle normal to $\Delta''_i$
	  in $M$, diffeomorphic to $\D^2\times\S^{n-3}$.

 \section{proof of theorems B and C}\label{BC_section}

 \subsection{Examples: realizing the homothety pseudo-groups}\label{homothety_sbs}

 First, we discuss the realization of some elementary but fundamental examples:
 the homothety pseudo-groups. They constitute the most simple nontaut,
 compactly generated pseudo-groups.
 
Given some positive real numbers $\lambda_1$~, \dots, $\lambda_r$~,
 let $G(\lambda_1,\dots,\lambda_r)$ be the pseudo-group
of local transformations of the real line
generated by the
 homotheties $t\mapsto\lambda_1 t$~, \dots, $t\mapsto\lambda_r t$~.
 We assume that
 the family $\log\lambda_1$~, \dots, $\log\lambda_r$
 is of linear rank $r$ over $\Q$~.
 
 For $r=1$~, the pseudo-group
  $G(\lambda_1)$ has two obvious realizations of interest.
 The first is
  on the annulus $A:=\S^1\times[0,1]$~.
   The compact leaf is
    $\S^1\times (1/2)$;
  the other leaves are transverse to $\partial A$ and spiral towards $\S^1\times (1/2)$.
  The second realization is on $\partial(A\times\D^2)\cong\S^2\times\S^1$~.
   The compact leaf is a $2$-torus, and splits $\S^2\times\S^1$ into
  two Reeb components.
  
   On the contrary, $G(\lambda_1)$ is \emph{not} realizable on $T^2$. Indeed,
   the foliation would be transversely oriented and have a single compact leaf,
   whose linear holonomy would be nontrivial, a contradiction.

 The case $r=2$ is analogous. The torus $T^2$ is endowed with the angle coordinates $x, y$~. One realizes
 $G(\lambda_1,\lambda_2)$ on $V:=T^2\times[0,1]$ by a foliation $\F(\lambda_1,\lambda_2)$
  transverse to both boundary tori,
 where its trace is the linear irrational foliation $dx\log\lambda_1+dy\log\lambda_2=0$~. The torus  $T^2\times(1/2)$ is a compact leaf;
the other leaves spiral towards it.
 
Notice that $G(\lambda_1,\lambda_2)$ is \emph{not} realizable by any foliation $\F$
on any closed, orientable $3$-manifold $M$~. For, by contradiction,
 $\F$
would have a unique compact leaf $L$ diffeomorphic to $T^2$~, along which $M$
 would split into two compact $3$-manifolds $M'$~, $M''$~.
On $\R\setminus 0$~,
the differential $1$-form $dt/t$ is invariant by $G(\lambda_1,\lambda_2)$~.
There would correspond on $M\setminus L$ a nonsingular closed $1$-form $\omega$ of rank $r=2$~, such that $\F\vert(M\setminus L)=\ker\omega$~.
 In $H^1(M';\R)$~,
 the de Rham cohomology class $[\omega]$
 decomposes as $(\log\lambda_1)e_1+(\log\lambda_2)e_2$~, with $e_1,e_2\in H^1(
 M';\Z)$~.
 The restriction $[\omega]\vert L\in H^1(L;\R)$ is of rank $2$~,
 being the class of the linear holonomy of $\F$ along $L$~. Thus, $e_1\vert L$ and  $e_2\vert L$ are not $\Q$-colinear
  in $H^1(L;\Q)$~.
 Since $L$ is a $2$-torus, $(e_1\vert L)\wedge(e_2\vert L)\neq 0$ in
 $H^2(L;\Z)$~. In other words, $e_1\wedge e_2\in H^2(M';\Z)$ is nonnull
 on the fundamental class of $\partial M'$~. This contradicts Stokes theorem,
  $M'$ being an orientable compact $3$-manifold.

One can ask if things would turn better if one dropped the condition that the realization be a \emph{tangentially orientable}
foliation. It is not difficult to see that the answer is negative: $G(\lambda_1,\lambda_2)$ is also {not} realizable by any foliation $\F$~ ,
even not orientable,
on any closed $3$-manifold $M$~. This is left as an exercise. I thank the referee for pointing out a mistake at this point
 in the first version of this paper.

      For \emph{every} $r\ge 2$~, the pseudo-group $G(\lambda_1,\dots,\lambda_r)$ is 
  realizable on a closed orientable $4$-manifold. Indeed, in a first place,
   for each $2\le i\le r$~,
 just as above,
 realize $G(\lambda_1,\lambda_i)$ by a foliation $\F(\lambda_1,\lambda_i)$ on $V:=T^2\times[0,1]$~.
  So,  $G(\lambda_1,\lambda_i)$ is also realized by the pullback $\F_i$
  of $\F(\lambda_1,\lambda_i)$ in the $4$-manifold
  $$M_i:=\partial(V\times\D^2)
  \cong T^2\times\S^2$$
  The compact leaf $L_i$ of $\F_i$ is the $3$-torus $T^2\times\S^1$.
 For each $i=3,\dots,r$,  in $L_2$ and in $L_i$, we pick some embedded circle $C_i\subset L_2$ (resp. $C'_i\subset L_i$)
  parallel to the first circle factor: the holonomy of $\F_2$ (resp. $\F_i$) along $C_i$
  (resp. $C'_i$) is the germ of $t\mapsto\lambda_1t$
  at $0$~. We arrange that $C_3$, \dots, $C_r$ are two by two disjoint. The loop $C_i$ (resp. $C'_i$) has in $M_2$ (resp. $M_i$) a small tubular neighborhood $N_i$ (resp. $N'_i$) $\cong\D^3\times\S^1$~, on
  the boundary of which $\F_2$ (resp. $\F_i$) traces a foliation composed of two Reeb components,
  realizing $G(\lambda_1)$~.
  We cut from $M_2$~, \dots, $M_r$ the interiors of $N_3$~, \dots, $N_r$~,
  $N'_3$~, \dots, $N'_r$~. We paste every $\partial N_i$ with $\partial N'_i$~,
  such that $\F_2\vert\partial N_i$ matches $\F_i\vert\partial N'_i$~. We get
  a closed $4$-manifold with a foliation realizing $G(\lambda_1,
  \dots,\lambda_r)$~.
  \medbreak
   
The realization of pseudo-groups of homotheties \emph{with boundary} is much alike:
 let $2\mun G(\lambda_1,\dots,\lambda_r)$ be the pseudo-group
of local transformations of the \emph{half}-line $\R_{\ge 0}$
generated by some family of
 homotheties $t\mapsto\lambda_1 t$~, \dots, $t\mapsto\lambda_r t$~,
  of rank $r$~. Each of the above realizations of $G(\lambda_1,\dots,\lambda_r)$
  splits along its unique compact leaf into two realizations of
   $2\mun G(\lambda_1,\dots,\lambda_r)$.

  \subsection{Novikov decomposition for pseudo-groups, and hinges}
  
  Let $(G,T)$ be a compactly generated pseudo-group of dimension $1$~.
  
We consider the closed orbits (the orbits topologically
closed in $T$~).

\begin{lem}\label{closed_orbits_lem}
   The union of the closed orbits is topologically closed in $T$~.
   \end{lem}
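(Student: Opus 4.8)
The plan is to prove the equivalent statement that the \emph{non-closed locus} $U:=\{t\in T:\ G(t)\text{ is not topologically closed}\}$ is open. Both $U$ and its complement are $G$-invariant (saturated), so it suffices to prove openness near each point, and---since the elements of $G$ are open maps carrying $U$ into $U$---we may freely move points along their orbits. In particular, using that $G$ is compactly generated, I would fix a relatively compact, exhaustive, open $T'\subset T$ with finitely many components together with a finite system $g_1,\dots,g_p$ of $G$-extendable generators of $G\vert T'$ whose domains and images are intervals (lemma \ref{intervals_lem}), and fix extensions $\bar g_i\in G$ with $\overline{Dom(g_i)}$ compact in $Dom(\bar g_i)$. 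These extensions provide the uniform control on which the whole argument rests: on the compact sets $\overline{Dom(g_i)}$ the maps $\bar g_i$ and $\bar g_i\mun$, and hence every \emph{bounded word} in them, are defined on a fixed neighborhood and have bounded distortion.

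First I would translate ``$G(t)$ is not closed'' into a local recurrence statement. If $t\in U$, choose $s\in\overline{G(t)}\setminus G(t)$; since $T'$ is exhaustive and orbit closures are $G$-invariant, we may assume $s\in T'$. Pick $\gamma_k\in G$ with $y_k:=\gamma_k(t)\to s$, the $y_k$ distinct and distinct from $s$. The elements $h_k:=\gamma_{k+1}\gamma_k\mun\in G$ then send $y_k\mapsto y_{k+1}$ with $y_k\to s$ and displacement tending to $0$: these are small-displacement germs of $G$ based at points accumulating at $s$. The key step is to upgrade this family of germs, via the extendable generators, to genuine elements of $G$ defined on one \emph{fixed} open arc $W\ni s$ and witnessing the recurrence there---so that $s$ lies in a nontrivial part of $\overline{G(t)}$, namely an exceptional or interval minimal set, or a closed orbit approached with nontrivial (one-sided) holonomy. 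In either case the conclusion I want is that \emph{every} orbit meeting $W$ on the relevant side is non-closed.

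Granting this, openness of $U$ follows: for $t'$ near $t$ the element $\gamma_1$ (whose domain is open and contains $t$) is defined at $t'$ and, by continuity, carries $t'$ into $W$; hence $G(t')$ meets $W$ and is therefore non-closed, so a whole neighborhood of $t$ lies in $U$. The hard part---and the only place where compact generation is truly used---is the middle step: the germs $h_k$ a priori have shrinking domains, and one must show that their recurrence is carried by elements of $G$ with a \emph{common} domain $W$. This is exactly what extendability is designed for; the main obstacle is to control the word-length (equivalently the domain-shrinkage) of the $h_k$ in terms of the fixed generators $\bar g_i$, ruling out the possibility that producing the small displacements near $s$ requires words of unboundedly shrinking support. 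Once the recurrence is realized on a fixed $W$, the dimension-one structure (an orbit entering $W$ must then accumulate inside $\overline W$) forces non-closedness of all nearby orbits, completing the proof.
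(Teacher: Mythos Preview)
Your approach is fundamentally different from the paper's, and it has a genuine gap at precisely the step you yourself flag as ``the main obstacle.'' You need to bound the word-length of the elements $h_k=\gamma_{k+1}\gamma_k^{-1}$ in the extendable generators, so that the recurrence they encode is carried by maps defined on one fixed neighborhood $W$ of $s$. But compact generation gives no such bound: extendability says each generator $g_i$ individually extends past the closure of its domain, yet a word of length $N$ in the $\bar g_i$'s is only guaranteed to be defined on an $N$-fold iterated preimage, which may shrink without limit as $N\to\infty$. Nothing in your outline prevents the $h_k$ from requiring arbitrarily long words and hence having domains collapsing to $\{s\}$. You have isolated the right difficulty but not solved it --- and the paper explicitly concedes: ``We know no proof for this fact in the pseudo-group frame.''

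The paper instead realizes $(G,T)$ as the holonomy pseudo-group of a Morse-singular foliation $\F$ on a compact $3$-manifold $M$, via the surgery construction of section~\ref{A_section}. Since $M\setminus Sing(\F)$ has homology of finite rank, Haefliger's classical argument \cite{hae1962} applies: the union of the closed leaves of $\F$ is closed in $M$, and this transfers back to the statement about closed orbits in $T$. The essential extra ingredient supplied by the foliation realization --- a finite-dimensional homology group constraining how closed leaves can accumulate --- has no evident analogue in the bare pseudo-group setting, which is exactly why the direct attack stalls.
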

   
 \begin{proof} We know no proof for this fact 
  in the pseudo-group frame.
  To prove it, we realize the pseudo-group, as in section \ref{A_section}, by
   a Morse foliation $\F$
  on a compact manifold $M$~. Since the homology of $M\setminus Sing(\F)$
  is of finite rank, Haefliger's argument \cite{hae1962} applies and shows that the union of the closed leaves is closed.
  \end{proof}
  
 We call a closed orbit \emph{isolated} (resp. \emph{left isolated})
 (resp. \emph{right isolated}) if it admits some neighborhood
 (resp. left neighborhood) (resp. right neighborhood) in $T$
 which meets no other closed orbit.

 By a \emph{component} of $(G,T)$ , one means
 a submanifold $T'\subset T$ of dimension $1$, topologically closed in $T$,
 and saturated for $G$~.
 
 By an \emph{$I$-bundle} (resp. an \emph{$\S^1$-bundle})
 we mean the pseudo-group generated by
 a finite number $r$ of \emph{global} diffeomorphisms on the compact interval
 (resp. the circle). It is of course realized on some compact $3$-manifold
 fibred over some closed surface (``suspension''). Every pseudo-group
 Haefliger-equivalent to an $I$-bundle (resp. an {$\S^1$-bundle})
 is also called an $I$-bundle (resp. an {$\S^1$-bundle}).
 The smallest possible $r$ is the \emph{rank} of the $I$-bundle
 (resp. $\S^1$-bundle).
 
 Any closed orbit whose isotropy group has infinitely many fix points
 bounds an $I$-bundle. Precisely,
 
 \begin{lem}\label{accumulation_lem}
 Let $G(t)\subset T$ be a closed orbit,
 and let $h_1$, \dots, $h_r$ be elements of $G$ whose germs at $t$ generate the isotropy
 group $G_t$. Assume that $h_1$, \dots, $h_r$ admit a sequence $(t_n)$ of
  common fix points other than $t$, decreasing (resp. increasing) to $t$.
  Put $I_n:=[t,t_n]$ (resp. $[t_n,t]$).
  
 Then, for every $n$ large enough,
 
  \begin{itemize}
  \item The restricted pseudo-group  $(G\vert I_n,I_n)$ is generated by
 $h_1\vert I_n$, \dots, $h_r\vert I_n$;
 
 \item  The $G$-saturation of $I_n$
 is an $I$-bundle component of $(G,T)$, Haefliger-equivalent to  $(G\vert I_n,I_n)$.
  
 \end{itemize}
 \end{lem}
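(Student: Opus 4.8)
The plan is to exploit the fact that $G(t)$ is a closed orbit together with the hypothesis that $h_1,\dots,h_r$ possess a sequence $(t_n)$ of common fixed points accumulating monotonically on $t$. First I would fix attention on the interval $I_n:=[t,t_n]$ (treating the decreasing case; the increasing case is symmetric) and observe that, because each $h_j$ fixes both endpoints $t$ and $t_n$, the restriction $h_j\vert I_n$ is a well-defined orientation-preserving self-diffeomorphism of the compact interval $I_n$. Thus $h_1\vert I_n,\dots,h_r\vert I_n$ are \emph{global} transformations of $I_n$, which is precisely the data defining an $I$-bundle of rank at most $r$. The content of the first bullet is that these global transformations actually generate all of $G\vert I_n$, and not merely a subpseudo-group.

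For that first bullet I would argue by compact generation combined with the closedness of the orbit. Since $G$ is compactly generated, by Lemma \ref{transversal_change_lem} and Lemma \ref{intervals_lem} one has finitely many $G$-extendable generators of $G$ restricted to a relatively compact exhaustive open set, with interval domains. The key point is that for $n$ large the interval $I_n$ is so short, and so tightly squeezed against the closed orbit $G(t)$, that any element of $G$ whose domain meets $I_n$ and which maps a point of $I_n$ into $I_n$ must, by the accumulation of fixed points and the germ-generation hypothesis $G_t=\langle [h_1]_t,\dots,[h_r]_t\rangle$, be expressible near $t$ as a composite of the $h_j^{\pm1}$; one then propagates this description across the compact interval $I_n$ using the local structure. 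The main obstacle, which I expect to be the crux, is precisely controlling elements of $G$ whose germs are \emph{not} at the fixed point $t$ but elsewhere in $I_n$: one must rule out the appearance of ``extra'' local transformations that do not extend to global ones of $I_n$, and this is where the closedness of $G(t)$ (so that no other closed orbit, and in particular no further recurrence, creeps in near $t$ for $n$ large) together with the isolation furnished by Lemma \ref{closed_orbits_lem} does the work.

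For the second bullet I would first check that the $G$-saturation $T'$ of $I_n$ is a \emph{component} in the sense defined above: it is $G$-saturated by construction, and I must verify it is a closed $1$-submanifold of $T$. Closedness follows because $I_n$ is compact and its orbit meets $T$ in the orbits of finitely many points; the endpoint orbit $G(t)=G(t_n)$ is closed by hypothesis, and the interior points of $I_n$ sweep out a saturated set whose closure adds only the boundary orbit. I would then produce the Haefliger equivalence between $(G\vert I_n, I_n)$ and $(G, T')$ by applying Example 3 after Definition \ref{equivalence_dfn}: take $F$ to be the inclusion $I_n\hookrightarrow T$ together with the finitely many $G$-elements needed to push $I_n$ around its saturation; the hypotheses $FF\mun\subset G$, $\cup Dom=I_n$, and $\cup Im$ being $G$-exhaustive in $T'$ are then routine to verify, the last one because $I_n$ meets every orbit of $G\vert T'$ by definition of the saturation.

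Finally, having established the equivalence $(G, T')\sim(G\vert I_n, I_n)$ and the first bullet's identification of $G\vert I_n$ as generated by the $r$ global diffeomorphisms $h_j\vert I_n$ of the compact interval, I conclude that $(G\vert I_n, I_n)$ is an $I$-bundle of rank at most $r$, hence so is its Haefliger-equivalent $G$-component $T'$. The one quantifier to keep honest throughout is ``for every $n$ large enough'': the smallness of $I_n$ is what guarantees both that no spurious generators intrude (first bullet) and that the saturation closes up cleanly without meeting other closed orbits (second bullet), so I would state the choice of threshold $n_0$ explicitly once, early, in terms of the isolation neighbourhood of $G(t)$ supplied by Lemma \ref{closed_orbits_lem}, and reuse it for both parts.
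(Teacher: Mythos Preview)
Your outline correctly identifies the shape of the argument and the role of compact generation, but there is a genuine gap at the point you yourself flag as ``the crux'': controlling elements of $G$ whose germs lie elsewhere in $I_n$, not at $t$. You say one must ``rule out the appearance of extra local transformations'' and gesture at Lemma~\ref{closed_orbits_lem}, but that lemma only says the union of closed orbits is closed; it gives no isolation of $G(t)$ and is not used here. The germ-generation hypothesis $G_t=\langle[h_j]_t\rangle$ controls only what happens \emph{at} $t$; it says nothing about a germ $[g]_s$ with $s\in I_n$, $s\neq t$. Your ``propagation across $I_n$'' is not a mechanism. Note also that your claim ``$G(t)=G(t_n)$'' is false: $t$ and $t_n$ are distinct and a priori on different orbits.

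The paper closes this gap by a preliminary reduction and a manipulation of the generating set. First it replaces $T$ by the exhaustive open subset $U=(T\setminus G(t))\cup\{t\}$, reducing to $G(t)=\{t\}$. Then, starting from finitely many extendable generators $g_1,\dots,g_p$ of $G\vert T'$, it uses extendability to arrange that for each $g_i$ the point $t$ either lies in both $Dom(g_i)$ and $Im(g_i)$, or is adherent to neither. For those $g_i$ defined at $t$ other than the $h_j$'s, the germ at $t$ is a word in the $h_j$'s on some compact neighbourhood $N_i$ of $t$, so one may replace $g_i$ by $g_i\vert(Dom(g_i)\setminus N_i)$ without losing generation. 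After this, the only generators whose supports meet a small neighbourhood of $t$ are exactly $h_1,\dots,h_r$. For $n$ large, $I_n$ lies in that neighbourhood, is invariant under each $h_j$ (both endpoints fixed), and is disjoint from the supports of the remaining generators. Hence $I_n$ is itself $G$-saturated and $G\vert I_n=\langle h_j\vert I_n\rangle$. Both bullets follow at once --- the second becomes trivial, since $I_n$ is already its own $G$-saturation and thus an $I$-bundle component.
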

 \begin{proof}

  One first reduces oneself to the case where $G(t)=\{t\}$, as follows.
Let $U:=(T\setminus G(t))\cup{t}$. By Baire's theorem,
every closed orbit is discrete. So, $U$ is open in $T$ and meets every orbit.
We change $(G,T)$ for $(G\vert U,U)$, which is also compactly generated
by proposition \ref{invariance_pro}.

So, we assume that  $G(t)=\{t\}$.
 
 Since $G$ is compactly generated,
 one has a topologically open, relatively compact $T'\subset T$ meeting
 every $G$-orbit (in particular $t\in T'$) such that $G\vert T'$
 admits a system of generators $g_1$~, \dots, $g_p$ which are  $G$-extendable.
 Let $\bar g_1$~, \dots, $\bar g_p\in G$ be some extensions.
 
If
  $t$ lies in the topological boundary of $Dom(g_i)$ with respect to $T$~,
   then we can avoid
 this by changing $g_i$ for $\bar g_i\vert(Dom(g_i)\cup(t-\epsilon,t+\epsilon'))$~, where $(t-\epsilon,t+\epsilon')$
 is relatively compact in $Dom(\bar g_i)\cap T'$~.
 The like holds for $Im(g_i)$~.
 Thus, after permuting the generators, for some $0\le q\le p$~, the point
 $t
 $ belongs to the domains and to the images of $g_1$~, \dots, $g_q$~;
 but $t$
 does not belong, nor is adherent, to the domains nor to the images of
 $g_{q+1}$~, \dots, $g_p$~.
 
  Also, restricting the domain of each $h_i$, we arrange
  that $h_i\in G\vert T'$ and that $h_i$ is $G$-extendable.
  Then, we can add the family $(h_i)$ to the family
  of generators $(g_i)$. So, we can assume
  that $r\le q$ and that $g_1=h_1$, \dots, $g_r=h_r$.
  
   Then, for every $r+1\le i\le q$~, the generator $g_i$ coincides with some
   composite of $g_1$, \dots, $g_r$ on some small compact neighborhood $N_i$ of $t$.
   We can change $g_i$ for $g_i\vert(Dom(g_i)\setminus N_i)$.
   Finally, we have obtained a family of generators $g_1$, \dots, $g_p$ for $G\vert T'$,
   such that
 $t$ belongs to the domains and to the images of $g_1=h_1$~, \dots, $g_r=h_r$~;
 but that $t$
 does not belong, nor is adherent, to the domains nor to the images of
 $g_{r+1}$~, \dots, $g_p$~. 
 
 For every $n$ large enough,
 $I_n$ is contained in $T'$ and in the domains of $g_1$, \dots, $g_r$;
  and $I_n$ is
  invariant by $g_1=h_1$, \dots, $g_r=h_r$; and $I_n$
 is disjoint from the supports of $g_{r+1}$, \dots, $g_p$. Thus, $I_n$
 is saturated for $G$, and $G\vert I_n$ is generated by $g_1\vert I_n$, \dots,
 $g_r\vert I_n$. The interval $I_n$ is an $I$-bundle component of $G$.
 \end{proof}
 
 We call an orbit \emph{essential} (with respect to $(G,T)$)
  if it meets no transverse positive loop
 and no transverse positive chain whose both endpoints lie on $\partial T$ . Every essential
 orbit is closed (obviously). In the union of the closed orbits,
  the union of the essential orbits is topologically closed (obviously) and open
  (by lemma \ref{accumulation_lem}).
  
    We call an $I$-bundle component
  of $G$ \emph{essential} (with respect to $(G,T)$) if its boundary orbits are essential
  with respect to $(G,T)$. Then, every
  closed orbit interior to this $I$-bundle is also essential with respect to $(G,T)$.

 The ``Novikov decomposition'' is well-known for foliations on compact manifolds.
 Every compact connected manifold endowed with a foliation of codimension one,
 either is an $\S^1$-bundle, or splits along finitely many compact leaves bounding some
 dead end components, into compact  components, such that each component is
 an $I$-bundle, or its interior is topologically taut. For compactly generated
 pseudo-groups, one has an analogous
 decomposition (exercise):
  
 \begin{pro}\label{novikov_pro} (Novikov decomposition)
  Let $(G,T)$ be a connected, compactly generated
   pseudo-group of dimension $1$. Assume that $(G,T)$ is not an $\S^1$-bundle.
  
  Then, $T$ splits, along finitely many essential orbits,
 into finitely many components
 $T_i$,
  such that  for each $i$:

 (a) the component  $(G\vert T_i,T_i)$ is an essential $I$-bundle,

  or
  
  (b)
 the interior of the component,
$(G\vert Int(T_i), Int(T_i))$, is taut.

 \end{pro}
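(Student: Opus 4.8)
The plan is to transport to the one--dimensional transversal $T$ the classical Novikov--Goodman decomposition of a codimension--one foliation on a compact manifold, the union of the essential orbits playing the role of the union of the compact leaves that bound the dead end components. Set $E\subset T$ to be the union of the essential orbits. As recorded just before the statement, $E$ is $G$-saturated and topologically closed in $T$: it is closed in the union of the closed orbits by Lemma~\ref{closed_orbits_lem}, and open in that union by Lemma~\ref{accumulation_lem}. Its complement $U:=T\setminus E$ is open and $G$-saturated. My intention is to cut $T$ along a $G$-invariant family $C$ of essential orbits --- the orbits in the frontier $\overline U\cap E$ together with the two boundary orbits of each maximal interval of $E$ carrying an $I$-bundle --- and to take for the $T_i$ the closures of the components of $T\setminus C$, keeping each maximal all-essential $I$-bundle interval as one component. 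The hypothesis that $(G,T)$ is not an \S$^1$-bundle serves to guarantee that each $T_i$ is an interval rather than a whole circle, so that ``boundary orbits'' are meaningful.

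The heart of the matter, and the only step that is not a formal manipulation of the definitions, is that $C$ should consist of \textbf{finitely many} orbits, equivalently that the walls do not accumulate. I would argue this exactly as Lemma~\ref{closed_orbits_lem} was proved: realize $(G,T)$, by the construction of section~\ref{A_section}, as the holonomy pseudo-group on $\ast\times T$ of a Morse foliation \F\ on a compact $3$-manifold $M$ (the singular realization uses only compact generation, not tautness). Under this realization a positive loop of $G$ corresponds to a closed transversal of \F, and a positive chain with both endpoints on $\partial T$ to a transverse arc joining two boundary components; hence an orbit is essential precisely when the corresponding leaf bounds a dead end component of \F. Since $M$ is compact, \F\ has only finitely many dead end components, whence finitely many such leaves and finitely many walls. \textbf{The main obstacle} is that \F\ is only Morse-singular, so Goodman's finiteness cannot be quoted verbatim: one must run the (Haefliger-type) argument of \cite{hae1962} on the regular open foliation $\F\vert(M\setminus\mathrm{Sing}(\F))$ and check that transversals and transverse boundary arcs can be routed around the finitely many singular cones without creating or destroying dead ends. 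I expect this singularity bookkeeping to be the delicate part. A purely pseudo-group alternative is also available: were distinct walls to accumulate along an orbit $G(t)$, cocompactness together with Lemma~\ref{accumulation_lem} applied at $t$ would exhibit an $I$-bundle on the accumulation side, forcing the nearby walls to be interior to a single essential $I$-bundle, hence not distinct walls.

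With $C$ finite, cutting along it yields, by cocompactness, finitely many $G$-saturated closed components $T_i$ meeting pairwise only along walls, and I would classify each from the definitions. Because every frontier orbit has been removed, no $T_i$ contains in its interior both a recurrent orbit and a non-recurrent one separated by an essential wall, so each interior is homogeneous. If every orbit of the interior of $T_i$ carries a positive loop, then $(G\vert\mathrm{Int}(T_i),\mathrm{Int}(T_i))$ is taut: this is case~(b). Otherwise some interior orbit carries no positive loop; here the finite system of $G$-extendable generators with interval domains (Lemma~\ref{intervals_lem}), combined with Lemma~\ref{accumulation_lem} near the bounding walls and a finite subcover of the compact interval $T_i$, shows that $G\vert T_i$ is Haefliger-equivalent to a pseudo-group generated by finitely many global transformations of the interval, i.e.\ an $I$-bundle; its boundary orbits being walls, it is an \emph{essential} $I$-bundle: this is case~(a). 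Throughout, the invariance of compact generation under Haefliger equivalence (Proposition~\ref{invariance_pro}) keeps every restricted pseudo-group compactly generated, so the classification is internally consistent and the decomposition is complete. The crux to which I would devote the most care remains the finiteness step --- the assertion that dead ends cannot accumulate --- since this is the one place where compactness, imported through the realization, is genuinely needed.
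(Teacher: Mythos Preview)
The paper does not actually prove Proposition~\ref{novikov_pro}. Immediately before the statement it is announced as an ``exercise'', and immediately after it the author writes: ``We shall not use this decomposition under this form, nor prove it in general. We need, under the hypotheses of theorems B and C, the more precise form~\ref{splitting_pro} below.'' So there is no paper-proof to compare your proposal against; what the paper \emph{does} prove is the sharper Proposition~\ref{splitting_pro}, under the extra hypothesis that every essential orbit has commutative isotropy, by building faithful hinge extensions rather than by cutting $T$ along walls.

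Your outline is in the right spirit --- in particular, invoking the Morse-singular realization of section~\ref{A_section} to import compactness is exactly the device the paper uses for Lemma~\ref{closed_orbits_lem}, and the author explicitly admits ``We know no proof for this fact in the pseudo-group frame.'' Two points deserve more caution, however. First, your ``purely pseudo-group alternative'' for finiteness is not a proof: accumulation of essential \emph{orbits} at $G(t)$ does not by itself produce a sequence of common \emph{fixed points} of generators of $G_t$, which is the hypothesis of Lemma~\ref{accumulation_lem}; a nearby closed orbit may meet a neighborhood of $t$ in several points permuted by the isotropy generators. You would need an additional argument (e.g.\ reducing to $G(t)=\{t\}$ and using extendability of a finite generating set, as in the proof of Lemma~\ref{accumulation_lem}) before that lemma applies. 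Second, in the classification step your dichotomy ``every interior orbit carries a positive loop, or not'' is too coarse: a non-essential orbit may owe its non-essentiality to a positive chain to $\partial T$ rather than to a positive loop, and such a chain need not stay inside $T_i$; you must check that saturatedness of $T_i$ really forces the chain or loop witnessing tautness to live in $\mathrm{Int}(T_i)$ for the restricted pseudo-group. These are fillable gaps, but they are the substantive ones in an argument the paper itself chose to leave aside.
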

     Novikov decompositions are functorial with respect to Haefliger
   equivalences: given a Haefliger
  equivalence between two pseudo-groups, to every Novikov decomposition of the
  one corresponds naturally a Novikov decomposition of the other.

We shall not use this decomposition under this form, nor prove it in general.
We need, under the hypotheses of theorems B and C, the more precise form
\ref{splitting_pro} below.

 \medbreak
 From now on, we assume moreover that in the
 compactly generated, $1$-dimensional pseudo-group $(G,T)$, every essential orbit is \emph{commutative,}
 that is, its isotropy group is commutative. By the \emph{essential rank} of $(G,T)$,
 we mean the supremum of the ranks of the isotropy groups of the essential orbits.

  The proof of theorems B and C somewhat consists in realizing independently every
  component of some Novikov decomposition, and pasting these realizations together.
  The interior of every component falling to (b) in proposition \ref{novikov_pro}, is realized on a closed
  $3$-manifold, thanks to  theorem A and to the following

 \begin{lem}\label{complement_lem}
 Let $(G,T)$ be a compactly generated pseudo-group of dimension $1$~.
 Let $G(t_0)\subset T$ be an isolated closed orbit, whose isotropy group is commutative.

 Then, the subpseudo-group $G\vert(T\setminus G(t_0))$ is also compactly generated.
 \end{lem}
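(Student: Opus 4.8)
The plan is to produce, for the restricted pseudo-group $(G\vert T_0, T_0)$ with $T_0 := T\setminus G(t_0)$, the data required by definition \ref{CG_dfn}: a relatively compact exhaustive open subset together with finitely many generators that are extendable \emph{inside} $G\vert T_0$. Since $G(t_0)$ is topologically closed, $T_0$ is open, and by Baire's theorem (exactly as in the proof of lemma \ref{accumulation_lem}) the orbit $G(t_0)$ is discrete; moreover, $T_0$ being open and $G$ consisting of local transformations, the orbits of $G\vert T_0$ are precisely the $G$-orbits other than $G(t_0)$. I would start from a compact-generation datum $(T',g_1,\dots,g_p)$ for $G$ with the $Dom(g_i)$ intervals (lemma \ref{intervals_lem}). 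Then the set $F:=G(t_0)\cap\Cl(T')$ is finite, and, up to conjugation by elements of $G$, the germ of the situation is the same at each point of $F$; so the whole analysis reduces to a single point $t_0\in F$ and its two sides.

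The crux is the local behaviour at $t_0$, and this is where commutativity and isolation enter. On the right side, consider the group $H^+$ of germs at $t_0$, read on $(t_0,t_0+\epsilon)$, of those elements of $G$ that fix $t_0$; it is commutative by hypothesis and finitely generated (a consequence of compact generation), by the right germs of a generating system $h_1,\dots,h_r$ of $G_{t_0}$. If some nontrivial element of $H^+$ had fixed points accumulating at $t_0$, lemma \ref{accumulation_lem} would produce an $I$-bundle component with closed boundary orbits arbitrarily close to $t_0$, contradicting the isolation of $G(t_0)$; hence $H^+$ acts freely on $(t_0,t_0+\epsilon)$ for $\epsilon$ small. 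A free action of a finitely generated commutative group on an interval is, by Hölder's theorem, archimedean, so $H^+\cong\Z^k$ with $k\le r$. When $k\ge 1$ there is a contracting generator, hence a fundamental-domain interval $D^+\subset(t_0,t_0+\epsilon)$ whose closure avoids $t_0$ while its $H^+$-translates cover $(t_0,t_0+\epsilon)$; thus $D^+$ meets every $G$-orbit entering that side, and the relevant generators are the restrictions $h_j\vert D^+$, which, being bounded away from $t_0$, are extendable inside $G\vert T_0$. The symmetric discussion applies on the left. The degenerate case $k=0$, where the one-sided isotropy is trivial, forces no recurrence towards $t_0$ on that side, and there the restrictions of the original $g_i$ already suffice.

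With these local pieces, I would set $\tilde T:=\bigl(T'\setminus\bigcup_{x\in F}W_x\bigr)\cup\bigcup_{x\in F}D_x$, where $W_x$ is a thin neighbourhood of $x$ and $D_x$ is the union of the fundamental domains on the sides of $x$ carrying nontrivial one-sided isotropy (on a trivial side one simply retains $T'$). Its closure lies in $\Cl(T')$ and, by construction, avoids $G(t_0)$, so $\tilde T$ is relatively compact in $T_0$; and it is $G\vert T_0$-exhaustive, since an orbit distinct from $G(t_0)$ meeting $T'$ either meets the retained part $T'\setminus\bigcup_xW_x$ or appears in $T'$ only next to $F$, in which case it is caught by one of the fundamental domains. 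As generators of $G\vert\tilde T$ I would take the restrictions of the $g_i$ to $\tilde T$ away from $F$ (still $G\vert T_0$-extendable, being bounded away from $t_0$), together with the local generators $h_j\vert D_x$. That this finite family generates $G\vert\tilde T$ follows by the usual subdivision argument: any element of $G\vert T_0$ has domain and image compact in $T_0$, hence bounded away from $G(t_0)$, so a fine enough factorization through the $g_i$ can be rerouted to avoid $t_0$, the passages close to $t_0$ being absorbed by the isotropy generators $h_j$.

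The main obstacle is exactly the relative compactness near the deleted orbit: removing $G(t_0)$ turns the finitely many points of $F$ into ends of $T_0$, near which $G$ \emph{a priori} carries infinitely many short transformations. Commutativity and isolation are what tame this, forcing the one-sided holonomy to be a finitely generated free abelian group with a relatively compact fundamental domain; the remaining delicate points are the degenerate case $k=0$ and the verification that the rerouted factorizations genuinely avoid $t_0$.
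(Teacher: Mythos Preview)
Your overall plan matches the paper's: reduce to $G(t_0)=\{t_0\}$, keep the original generators away from $t_0$, and near $t_0$ control the dynamics with finitely many isotropy elements. The gap is in your control of the one-sided action.

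The claim that $H^+$ acts freely on $(t_0,t_0+\epsilon)$ is false, and the appeal to lemma \ref{accumulation_lem} is a misreading: that lemma requires a sequence of \emph{common} fixed points of a generating system of $G_{t_0}$, not fixed points of a single nontrivial element. For a concrete obstruction, take $h_1(t)=t/2$ and any $h_2$ commuting with $h_1$ (so $h_2(t/2)=h_2(t)/2$) which is the identity on $[3/4,1]$ but not on $(1/2,3/4)$; then $t_0=0$ is an isolated closed orbit (every other orbit accumulates on $0$), the isotropy is commutative, yet $h_2$ has fixed points arbitrarily close to $0$. So H\"older does not apply, and your ``contracting generator'' with its fundamental-domain interval $D^+$ is not available as stated. (Incidentally, $H^+\cong\Z^k$ already follows from ``finitely generated abelian and torsion-free'', without H\"older or freeness.) The case $k=0$ you mention cannot occur on a side of $t_0$ interior to $T$: if all the $h_i$ were the identity on $(t_0,\epsilon)$ then lemma \ref{accumulation_lem} \emph{would} apply, every nearby point being a common fixed point, contradicting isolation.

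The paper avoids this entirely. It never looks for a single fix-point-free element; from isolation it only uses that the $g_i$ fixing $t_0$ have no \emph{common} fixed point in $(t_0,u_0]$, so the sequence $u_{n+1}:=\min_i g_i(u_n)$ strictly decreases to $t_0$. The exhaustive set is simply $T'':=T'\setminus[t_0,u_0]$, and the generators are just the $g_i\vert T''$ --- no extra isotropy generators are added. The substantive step is the generation proof: given a word $g_{j(\ell)}\cdots g_{j(1)}$ whose trace enters $(u_n,u_{n-1}]$ at position $k$, one replaces $g_{j(k+1)}g_{j(k)}$ by $g_{i(n-1)}\,g_{j(k+1)}g_{j(k)}\,g_{i(n-1)}^{-1}$; commutativity makes the new word equal the old as a germ, while the three new trace-points lie outside $(t_0,u_{n-1}]$. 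A double induction on $n$ and on the number of bad positions finishes. Your final ``rerouting'' sentence is exactly this step, but as written it leans on a fundamental-domain structure that need not exist; the conjugation-by-$g_{i(n-1)}$ trick is where commutativity is actually spent, and it should be made explicit.
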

 \begin{proof}
 
  We treat the case where the orbit $G(t_0)$ is contained in $\partial_-T$~. Of course,
  the case where it is contained in $\partial_+T$ is symmetric; and
   the case where
   it is contained in $Int(T)$
 is much alike.
 
  Since $G$ is compactly generated,
 one has a topologically open, relatively compact $T'\subset T$ meeting
 every $G$-orbit such that $G\vert T'$
 admits a system of generators $g_1$~, \dots, $g_p$ which are  $G$-extendable.
 
 Just as in the proof of lemma \ref{accumulation_lem},
 one can arrange that $G(t_0)=\{t_0\}$~ (in particular, $t_0\in T'$);
and that, for some $0\le r\le p$~, the point
 $t_0$ belongs to the domains and to the images of $g_1$~, \dots, $g_r$~;
 and that $t_0$
 does not belong, nor is adherent, to the domains nor to the images of
 $g_{r+1}$~, \dots, $g_p$~.
 
   Since $t_0$ is isolated as a closed orbit
 of $G$~, one has $r\ge 1$~.
  We can arrange moreover, to simplify notations, that the family $(g_i)$
 is symmetric: the inverse of every $g_i$ is some $g_j$~.

 The isotropy group of $t_0$ being commutative,
 there is a $u_0>t_0$ so close to $t_0$ that
 \begin{enumerate}
 \item  For every $r+1\le i\le p$~, the interval $[t_0,u_0]$ does not meet $Dom(g_i)$~;
 \item For every $1\le i,j\le r$ and every $t\in[t_0,u_0]$~,
 one has $t\in Dom(g_i)$ and $g_i(t)\in Dom(g_j)$ and $g_ig_j(t)=g_jg_i(t)$~.
 \end{enumerate}
 
Put  $T'':=T'\setminus[t_0,u_0]$ and $G_0:=G\vert(T\setminus t_0)$~.
 We shall show that
  every orbit of $G_0$ meets $T''$~, and that the pseudo-group
 $G_0\vert T''$ is generated by $g_1\vert T''$~, \dots, $g_p\vert T''$~.
  Every $g_i\vert T''$ being $G_0$-extendable, it will follow that $G_0$ is
 compactly generated.
 
To this end, define by induction two sequences $u_n\in[t_0,u_0]$ and $1\le i(n)\le r$~,
such that $u_{n+1}:=g_{i(n)}(u_{n})$ is the minimum of $g_1(u_{n})$~,
\dots, $g_r(u_n)$~.
 Because $t_0$ is isolated as a closed orbit
 of $G$~, there is no common fixed point for $g_1$~, \dots, $g_r$ in the interval
 $(t_0,u_0]$~. Thus, $(u_n)$ decreases to $t_0$~. Also,  for every $n\ge 0$~: $$g_{i(n)}\mun((u_{n+1},u_0])\subset(u_n,u_0]\cup T''\ \ \ (*)$$
In particular,  every orbit of $G_0$ meets $T''$~.
  
 Consider the germ $[g]_t$ of some $g\in G_0$ at some point $t\in Dom(g)$
  such that $t\in T''$ and $g(t)\in T''$~. Since the $g_i$'s generate $G\vert T'$~,
 this germ can be decomposed as a word $w$ in the germs of the generators: $$[g]_t=[g_{j(\ell)}]_{t(\ell-1)}\dots[g_{j(1)}]_{t(0)}$$
 where $1\le j(1), \dots, j(\ell)\le p$~, where $t(0)=t$~,
  and where for every $0\le k\le\ell$
 one has $t(k):=g_{j(k)}\circ\dots\circ g_{j(1)}(t)\in T'$~.
 
 We call the finite sequence $t(0)$~, \dots, $t(\ell)$ the \emph{trace} of $w$~.
 We have to prove that $[g]_t$ admits also a second such decomposition, whose
 trace is moreover contained in $T''$~.

  We make a double induction: on the smallest integer $n\ge 0$ such that the trace
  of $w$
  is disjoint from $[t_0,u_{n}]$~, and, if $n\ge 1$~, on the number of $k$'s
  for which $t_k\in(u_{n},u_{n-1}]$~.
  
  Assume that $n\ge 1$~. Let $1\le k\le\ell-1 $ be an index for which $t_{k}\in(u_n,u_{n-1}]$~.
  Consider the word
  $$w':=g_{j(\ell)}
  \dots g_{j(k+2)}
  g_{i(n-1)}
  g_{j(k+1)}
   g_{j(k)}
  g_{i(n-1)}\mun
  g_{j(k-1)}
  \dots g_{j(1)}
  $$
  By the property (2) above applied at the point $t_k$ to the pair
   $g_{i(n-1)}\mun, g_{j(k+1)}$ and to the pair  $g_{i(n-1)}\mun, g_{j(k)}\mun$~,
   the composite $w'$ is defined at $t$~, and $w'$ has the same germ at $t$ as $w$~.
   
  The trace of $w'$ at $t$ is the same as the trace of $w$~,
  except that $t(k)$ has been changed for the three points
  ${g_{i(n-1)}\mun(t(k-1))}$~,  ${g_{i(n-1)}\mun(t(k))}$ and  ${g_{i(n-1)}\mun(t(k+1))}$~. By ($\ast$),
  none of the three lies in $[t_0,u_{n-1}]$~. The induction is complete.
 \end{proof}

         The pasting of the realizations of the Novikov components
          will be a little
  delicate. The following
 notion allows us to take in account,
  with every commutative closed orbit, its isotropy group;
 and with every commutative $I$-bundle,
  the holonomy of its boundary orbits on the exterior side.
 
\begin{dfn}\label{thick_dfn}
 We call a pseudo-group $(\Gamma,\Omega)$ of dimension $1$
  a \emph{hinge} if $\Omega$ is an interval, either open, or compact, or semi-open;
  and
   if there exist a $\Gamma$-invariant compact
 interval $[a,b]\subset\Omega$, with $a\le b$, and a system of generators
  $\gamma_1$~, \dots, $\gamma_r$ for $\Gamma$, such that
 \begin{enumerate}
\item The domains and the images of $\gamma_1$~,\dots, $\gamma_r$
are intervals containing $[a,b]$~;
\item
For every $\gamma,\eta\in\Gamma$~, one has $\gamma\eta=\eta\gamma$ and
 $\gamma\mun\eta=\eta\gamma\mun$ and  $\gamma\mun\eta\mun=\eta\mun\gamma\mun$ wherever both composites are defined;
 \item Every neighborhood of $[a,b]$ in $\Omega$ meets every orbit of $\Gamma$~.
 \end{enumerate}
 \end{dfn}
We call $[a,b]$ the \emph{core}.
 Write $\partial\Omega$ the boundary
of $\Omega$ as a manifold, that is,
 the boundary points of $\Omega$ belonging to $\Omega$, if any.
By (3), every such boundary point copincides with $a$ or $b$~.
 The hinge is
\emph{degenerate} if $a=b$, in which case $a=b$ is the (unique) closed
 orbit of $(\Gamma,
\Omega)$.  The hinge is
\emph{nondegenerate} if $a<b$, in which case $[a,b]$ is the (maximal) $I$-bundle component of $(\Gamma,
\Omega)$. 
 The smallest possible $r$ is the \emph{rank} of $(\Gamma,\Omega)$~.

\begin{lem}\label{no_fix_point_lem}
Let $(\Gamma,\Omega)$ be a hinge. Then, there exists a local transformation in $\Gamma$
whose domain contains the core, and which is fix-point free outside the core.
\end{lem}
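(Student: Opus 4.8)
The plan is to reduce the statement to a question about the germs of $\Gamma$ at the two endpoints $a^-$, $b^+$ of the core, and to settle that question by a fixed-point-set induction followed by a combination of the two ends.

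First I would record that every generator $\gamma_i$ fixes $a$ and $b$: since $[a,b]$ is $\Gamma$-invariant and, by condition (1) of Definition \ref{thick_dfn}, the domains and images of the $\gamma_i$ contain $[a,b]$, each $\gamma_i$ maps $[a,b]$ onto itself, and being orientation-preserving it fixes the two endpoints. Hence every element of $\Gamma$ defined near the core fixes $a$ and $b$ and preserves each side $\{x<a\}$, $\{x>b\}$. Because the local transformation we seek may have for domain \emph{any} neighbourhood of $[a,b]$, it suffices to produce one $g\in\Gamma$ whose germ at $b^+$ and whose germ at $a^-$ are both fixed-point free, and then to restrict $g$ to a small neighbourhood of $[a,b]$. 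Next I would use condition (3) of Definition \ref{thick_dfn} through a \emph{trapping} argument: if common fixed points $p_n\searrow b$ of $\gamma_1,\dots,\gamma_r$ existed, then each generator would map $[p_{n+1},p_n]$ onto itself, so the whole $\Gamma$-orbit of a point of $(p_{n+1},p_n)$ would stay in $[p_{n+1},p_n]$ and miss a neighbourhood of the core, contradicting (3). Thus, after shrinking $\Omega$, the $\gamma_i$ have no common fixed point on $(b,b+\delta)$ nor on $(a-\delta,a)$.

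The one-ended core of the argument is then: an abelian group of orientation-preserving germs at $b^+$ with no common fixed point near $b^+$ contains an element with no fixed point near $b^+$. I would prove this by induction on the number $r$ of generators. If $\mathrm{Fix}(\gamma_r)$ does not accumulate at $b$, then $\gamma_r$ itself works. Otherwise $F:=\mathrm{Fix}(\gamma_r)$ is closed, accumulates at $b$, and is invariant under the commuting $\gamma_1,\dots,\gamma_{r-1}$; any common fixed point of these lying in $F$ is fixed by $\gamma_r$ too, hence is a common fixed point of \emph{all} the generators and so does not accumulate. By induction some word $h$ in $\gamma_1,\dots,\gamma_{r-1}$ is fixed-point free on $F$ near $b$. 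A \emph{gap argument} — $h$ carries each complementary interval of $F$ to another such interval, and can fix an interior point only by fixing the endpoints — then upgrades this to: $h$ is fixed-point free near $b^+$ on the whole interval. Applying this at each end, and replacing by inverses, yields elements $g_b$ with $g_b(x)>x$ near $b^+$ and $g_a$ with $g_a(x)>x$ near $a^-$.

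The delicate part is the \emph{combination}, since $g_a$ and $g_b$ are each good at only one end and a priori spoil the other. The clean mechanism I would use is the following germ-level computation: if one can arrange moreover that $g_b\succeq\id$ near $a^-$ and $g_a\succeq\id$ near $b^+$ (i.e. $g_b(x)\ge x$ for $x$ near $a$ and $g_a(x)\ge x$ for $x$ near $b$), then $g:=g_ag_b=g_bg_a$ satisfies $g(x)=g_a(g_b(x))\ge g_b(x)>x$ near $b^+$ and $g(x)=g_a(g_b(x))>g_b(x)\ge x$ near $a^-$, so $g$ is fixed-point free on both sides of the core, which is exactly what is wanted. The whole difficulty is thus concentrated in the \emph{opposite-end sign condition}, and this is the step I expect to be the main obstacle: controlling, at each end, whether a fixed-point-free element pushes the core points to the right or to the left, uniformly as one approaches the endpoint. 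I would attack it by building, at each end, a translation-number homomorphism $\theta_a,\theta_b\colon\Gamma\to\mathbf{R}$ from a $\Gamma$-invariant Radon measure (available by amenability of the abelian group, with the orbit-accumulation of (3) guaranteeing the measure is nontrivial near the endpoint), so that $\theta_\bullet(g)\neq 0$ forces $g$ to be fixed-point free at that end with a definite sign; one then finds $g$ outside $\ker\theta_a\cup\ker\theta_b$ using that a group is never the union of two proper subgroups. The subtle point to be checked carefully — the crux of the proof — is the nontriviality of $\theta_a$ and $\theta_b$, the potential obstruction being purely ``parabolic'' end behaviour on which every translation number vanishes; ruling this out is where the orbit-accumulation hypothesis must be used in full.
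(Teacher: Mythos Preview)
Your inductive step for the one-ended case does not go through as written: when you pass to $F=\mathrm{Fix}(\gamma_r)$ you say ``by induction some word $h$ in $\gamma_1,\dots,\gamma_{r-1}$ is fixed-point-free on $F$ near $b$'', but your inductive hypothesis is about germs acting on an \emph{interval}, and $F$ is not one; nor can you apply the hypothesis to $\gamma_1,\dots,\gamma_{r-1}$ on the full interval, since those $r-1$ maps may well share fixed points accumulating at $b$ outside $F$. (The repair is to run the induction over pairs $(\text{generators},C)$ with $C$ an arbitrary closed invariant subset accumulating at $b$; your gap argument then does lift fixed-point-freeness from $F$ to $C$.) Your translation-number route, by contrast, is sound and in fact makes the induction unnecessary: the ``crux'' you leave open---nontriviality of $\theta_b$---is short to settle: if $\theta_b\equiv0$ then for every $x\in\mathrm{supp}(\mu)$ and every generator $\gamma_i$ one has $\mu\bigl((\gamma_i^{-1}(x),\gamma_i(x))\bigr)=0$, forcing $\gamma_i(x)=x$; so $\mathrm{supp}(\mu)$ would consist of common fixed points, contradicting property~(3) of Definition~\ref{thick_dfn}. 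Then $\ker\theta_a$, $\ker\theta_b$ are proper and your union-of-two-subgroups argument finishes both ends at once.

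The paper's proof is quite different and entirely elementary---no measures, no induction. For one end it picks $u_0>b$ close to $b$ and lets $\gamma$ be whichever of $\gamma_1^{\pm1},\dots,\gamma_r^{\pm1}$ sends $u_0$ furthest toward $b$; iterating the choice yields $u_0>u_1>\cdots\searrow b$, and a short conjugation argument using only the commutation relations of Definition~\ref{thick_dfn}(2) shows that this \emph{single} $\gamma$ already satisfies $\gamma(t)<t$ on all of $(b,u_0]$. For the combination it takes $\gamma$ good on the right and $\delta$ good on the left; if neither is good on both sides one shrinks so that $\gamma(s_0)=s_0$ on the left and $\delta(t_0)=t_0$ on the right, and then $\gamma\delta$ works on both sides, since by commutativity $\gamma\delta$ sends $\gamma^n(t_0)\mapsto\gamma^{n+1}(t_0)$ and $\delta^n(s_0)\mapsto\delta^{n+1}(s_0)$, two monotone sequences converging to $b$ and to $a$. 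Your measure-theoretic approach buys a clean, case-free two-ended step, at the cost of invoking Plante-type existence of an invariant Radon measure; the paper's bare-hands argument stays inside the pseudo-group and uses nothing beyond commutativity.
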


\begin{proof}
i) In case $\Omega$ coincides with the core $[a,b]$~, there is nothing to prove.
\medbreak
ii) Consider the case $\partial\Omega=a$~.

Let
 $\gamma_1$~, \dots, $\gamma_r$ be as in definition \ref{thick_dfn}. Write $\gamma_{r+i}=\gamma_i\mun$ ($1\le i\le r$)~.
 Fix $u_0>b$
so close to $b$~, that $u_0$ belongs to the domains of $\gamma_1$~, \dots, $\gamma_{2r}$~.
Define by induction two sequences $u_n\in(b,u_0]$ and $1\le i(n)\le 2r$~,
such that $u_{n+1}:=\gamma_{i(n)}(u_{n})$ is the minimum of $\gamma_1(u_{n})$~,
\dots, $\gamma_{2r}(u_n)$~.
\medbreak
Claim: $\gamma:=\gamma_{i(0)}$ is fix-point free in $(b,u_0]$~.

 Indeed, consider any $t\in(b,u_0]$~. By property (3) of definition \ref{thick_dfn},
 there is no common fixed point for $\gamma_1$~, \dots, $\gamma_r$ in the interval
 $(b,u_0]$~. Thus, $(u_n)$ decreases to $b$~.
 By construction, the two following properties are obvious, for every $n\ge 1$ and every $0\le i\le 2r$~:
$$\gamma_{i(n)}\mun((u_{n+1},u_n])\subset(u_n,u_{n-1}]\ \ (A)$$
$$\gamma_i(b,u_{n}]\subset (b,u_{n-1}]\ \ (B)$$
 Let $N$ be the integer such that $t\in(u_{N+1},u_N]$~. 
By (A), the composite $\alpha:=\gamma_{i(1)}\mun\dots\gamma_{i(N)}\mun$ is defined at $t$,
and $\alpha(t)\in(u_1,u_0]$~. Consequently, $\gamma(\alpha(t))<\alpha(t)$~.
If $N=0$~, this means that $\gamma(t)<t$~, and we are done.
So, assume $N\ge 1$~.
Then, $\alpha$ is defined on the whole interval $(b,u_{N-1})$~, which contains $t$ and $\gamma(t)$ (by B).
 On the other hand, by (B), for every $k=0,\dots,N$~,
the composite
$$w_k:=\gamma_{i(1)}\mun\dots\gamma_{i(k)}\mun\gamma\gamma_{i(k+1)}\mun\dots\gamma_{i(N)}\mun$$ is defined at $t$~.
By property (2) of definition \ref{thick_dfn}~ , one has $w_0(t)=w_1(t)$~,\dots, $w_{N-1}(t)=w_N(t)$~. So,
$\alpha(\gamma(t))=\gamma(\alpha(t))<\alpha(t)$~. Applying $\alpha\mun$~, we get $\gamma(t)<t$~. The claim is proved.
\medbreak

iii) The case $\partial\Omega=b$ is of course symmetric to ii).
\medbreak
iv) In the remaining case $\partial\Omega=\emptyset$, we need a little more argument to get
a local transformation which is fix-point free \emph{in the same time} on the left of $a$, and on the right of $b$~.

Like in case iii), one makes a composite $\gamma$ of the generators $\gamma_i$'s, defined on some
neighborhood of $[a,b]$~, and such that $\gamma(t)<t$
for every $t>b$ in the domain of $\gamma$~. Symmetrically, one makes a composite $\delta$ of the generators $\gamma_i$'s, defined on some
neighborhood of $[a,b]$~, and such that $\delta(t)>t$
for every $t<a$ in the domain of $\delta$~. Clearly, by property (2), $\gamma\delta=\delta\gamma$ on some small neighborhood $[s_0,t_0]$ of $[a,b]$~.
 If $\gamma$ (resp. $\delta$) is fix-point free on $[s_0,a)$ (resp. on $(b,t_0]$)~, then $\gamma$ (resp. $\delta$) restricted to $(s_0,t_0)$ works.
So, disminushing the interval $[s_0,t_0]$ if necessary, we can assume that $\gamma(s_0)=s_0$ and $\delta(t_0)=t_0$~.
Then, $\gamma\delta$ restricted to $(s_0,t_0)$
 works. Indeed, let $t_n:=\gamma^n(t_0)$ and $s_n:=\delta^n(s_0)$~. Then, the sequence $t_n$ decreases to $b$~,
the sequence $s_n$ increases to $a$~, and  $\gamma(s_n)=s_n$~, and  $\delta(t_n)=t_n$~, and $\gamma\delta(t_n)=t_{n+1}$~, and
$\gamma\delta(s_n)=s_{n+1}$~. So, $\gamma\delta$ is fix-point free in $[s_0,a)$ and in $(b,t_0]$~.

\end{proof}
 
 Every hinge is easily realized:

 \begin{lem}\label{commutative_realization_lem}
 Let $(\Gamma,\Omega)$ be a hinge of rank $r\ge 1$.
 \begin{enumerate}
 \item If $r\le 2$, the hinge $(\Gamma,\Omega)$ is realized on $T^2\times[0,1]$, with $2-\sharp(\partial\Omega)$ transverse boundary components;
  \item For every $r$, the hinge $(\Gamma,\Omega)$ is realized on some compact $4$-manifold, without
  transverse boundary component.
 \end{enumerate}
 \end{lem}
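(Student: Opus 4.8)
The plan is to realize every hinge by a suspension over a torus, using the commutativity of $\Gamma$; when the rank is large, the point is to keep the ambient dimension equal to four by assembling the realization out of rank-$\le 2$ pieces, exactly as the homothety pseudo-groups $G(\lambda_1,\dots,\lambda_r)$ were realized in paragraph \ref{homothety_sbs}.

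For part (1), recall from the remark after Definition \ref{thick_dfn} that each of the two sides of the core $[a,b]$ is of one of two types: either the corresponding end of $\Omega$ is the core endpoint $a$ or $b$ itself, and then it belongs to $\partial\Omega$; or $\Omega$ runs beyond the core on that side into a \emph{wing} whose far end does not belong to $\Omega$. Padding with $\gamma_2=\id$ if $r=1$, I would first suspend the two commuting diffeomorphisms $\gamma_1\vert[a,b]$, $\gamma_2\vert[a,b]$ of the core over $T^2$. Since $[a,b]$ is $\Gamma$-invariant and $\Gamma$ is orientation-preserving, $a$ and $b$ are fixed, so this suspension is a foliation on the trivial interval bundle $T^2\times[a,b]$ whose two boundary tori are compact leaves and whose holonomy on a fibre $*\times[a,b]$ is $\langle\gamma_1,\gamma_2\rangle\vert[a,b]$. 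On a $\partial\Omega$-side I keep the corresponding compact-leaf torus as a tangential boundary; on a wing side I glue on a spiralling piece, modelled on the homothety foliation $\F(\lambda_1,\lambda_2)$ of paragraph \ref{homothety_sbs}, in which the leaves leave the core torus and run positively transverse to one more torus, which becomes a transverse boundary. The result is diffeomorphic to $T^2\times[0,1]$ and carries exactly $2-\sharp(\partial\Omega)$ transverse boundary tori.

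The construction of the wing pieces, and the verification that gluing them in creates no holonomy beyond $\Gamma$, is where commutativity and Lemma \ref{no_fix_point_lem} enter. The fixed-point-free element furnished by that lemma shows that on each wing the orbits descend monotonically towards the core, so the wing is the quotient of a genuine global suspension over a circle and caps off cleanly against a transverse-boundary torus; commutativity guarantees that the wing monodromy agrees with the core monodromy, so that the holonomy pseudo-group, read on the interior of the fibre together with its tangential-boundary endpoints, is generated by $\gamma_1,\dots,\gamma_r$ and is Haefliger-equivalent to $(\Gamma,\Omega)$. For part (2) the obstacle is dimension: a direct suspension over $T^r$ would produce an $(r+1)$-manifold, so I would instead imitate the rank reduction of paragraph \ref{homothety_sbs}. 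Choose, by Lemma \ref{no_fix_point_lem}, an element $\gamma_0\in\Gamma$ whose domain contains the core and which is fixed-point free outside it; adjoining $\gamma_0$ to the generators changes nothing, as $\gamma_0\in\Gamma$. For each $i$ the sub-hinge $\langle\gamma_0,\gamma_i\rangle$ has rank at most $2$, hence is realized on $T^2\times[0,1]$ by part (1); applying the transverse-boundary-removing operation $\partial(\,\cdot\,\times\D^2)$ from the introduction turns each into a realization on a closed orientable $4$-manifold $M_i$, whose compact leaf carries an embedded circle $C_i$ with holonomy the germ of $\gamma_0$ and a tubular neighbourhood $\cong\D^3\times\S^1$ bounded by an $\S^2\times\S^1$ that carries the two-Reeb-component foliation realizing $\langle\gamma_0\rangle$. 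Cutting out these neighbourhoods and pasting the $M_i$ together along their $\S^2\times\S^1$ boundaries, matching the $\gamma_0$-foliations, yields a closed orientable foliated $4$-manifold realizing $(\Gamma,\Omega)$.

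The step I expect to be the main obstacle is this last assembly. I must produce, inside each rank-$2$ piece, a gluing circle realizing the \emph{same} germ $\gamma_0$ equipped with a standard two-Reeb-component neighbourhood; arrange these loci to be disjoint and correctly framed; and check that the holonomy pseudo-group of the pasted foliation on a common transversal is generated by exactly $\gamma_0,\gamma_1,\dots,\gamma_r$, so that it equals $\Gamma$ and the pasting creates no spurious orbit identifications. Commutativity of $\Gamma$ is precisely what makes the $\gamma_0$-gluing consistent across all pieces, and Lemma \ref{no_fix_point_lem} is what supplies the clean two-Reeb-component model along each $C_i$.
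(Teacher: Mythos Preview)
Your overall strategy coincides with the paper's: suspend over $T^2$ for part (1), then for part (2) reduce to rank~$2$ pieces and glue them along a common rank~$1$ sub-hinge, exactly as in paragraph~\ref{homothety_sbs}. Two points deserve comment.

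For part (1), the paper's argument is more direct than your core-plus-wings gluing. One simply forms the suspension of $\gamma_1,\gamma_2$ over $T^2$ on a neighbourhood $U$ of $T^2\times[a,b]$ in $T^2\times\Omega$ (not just on the core). Property~(3) of Definition~\ref{thick_dfn} already says $\gamma_1,\gamma_2$ have no common fixed point outside $[a,b]$, so one can carve out of $U$ a copy of $T^2\times[0,1]$ whose two boundary tori are transverse to the suspension foliation and which contains $T^2\times[a,b]$ in its interior; the restriction of the foliation realizes $(\Gamma,\Omega)$. No separate wing pieces, no gluing, and Lemma~\ref{no_fix_point_lem} is not needed at this stage.

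For part (2), your description carries over the homothety picture (paragraph~\ref{homothety_sbs}) verbatim, but that picture is the \emph{degenerate} case $a=b$. When the hinge is nondegenerate, each $4$-dimensional realization $M_i$ does not have ``the compact leaf'' but rather a core $I$-bundle $B_i\cong T^3\times[a,b]$. Picking a circle $C_i$ in one boundary leaf and excising its tubular neighbourhood does not leave a two-Reeb-component boundary foliation, and the gluing will not match. The fix the paper uses is to take instead an embedded \emph{annulus} $A_i:=C_i\times[a,b]\subset B_i$ (with $C_i\subset T^3$ parallel to the factor carrying $\gamma_1$); its tubular neighbourhood is still $\cong\D^3\times\S^1$, but the trace of $\F^4_i$ on $\partial N_i\cong\S^2\times\S^1$ is now an $I$-bundle together with two Reeb components, realizing the full rank-$1$ hinge $(\langle\gamma_1\rangle,\Omega)$ rather than merely the germ of $\gamma_1$ at a single point. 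It is along these matched $(\langle\gamma_1\rangle,\Omega)$-boundaries that the $M_i$ are pasted. Once you make this adjustment, your argument and the paper's coincide.
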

 \begin{proof}
 The realization is much like in the particular
 case of the homothety pseudo-groups,
 seen above at paragraph  \ref{homothety_sbs}.
 Consider, to fix ideas, the case where $\partial\Omega=\emptyset$~.

(1)
Let us assume that $r= 2$,
 and let us realize $(\Gamma,\Omega)$ on $T^2\times[0,1]$~. Recall that $[a,b]\subset\Omega$ is the core
(definition \ref{thick_dfn}).
 
 The suspension of $\gamma_1$ and $\gamma_2$ over $T^2$
 provides,  in $T^2\times\Omega$~, a foliation $\F$ on some open neighborhood $U$ of $T^2\times[a,b]$~. By property (3)
of definition \ref{thick_dfn}, the local transformations $\gamma_1$ and $\gamma_2$ have
  no common fix point outside $[a,b]$~.
  Consequently, one has an embedding of $T^2\times[0,1]$ into $U$
 containing
 $T^2\times[a,b]$ in its interior, and meeting every leaf of $\F$~,
 and such that $T^2\times 0$ and $T^2\times 1$
 are embedded transversely to $\F$~. It is easily verified that $\F\vert(T^2\times[0,1])$
 realizes $(\Gamma,\Omega)$~.

 (2) Now, let $r$ be any integer $\ge 2$~. After lemma \ref{no_fix_point_lem}, we can assume that $\gamma_1$ is fix-point free
outside $[a,b]$~. Also, by a restriction of $\Omega$
which amounts to a Haefliger equivalence of the hinge pseudo-group, one arranges that $\Omega=Dom(\gamma_1)\cup Im(\gamma_1)$~.
Then, for each $2\le i\le r$~, just as in case (1), one
 realizes the pseudo-group $\Gamma_i:=<\gamma_1,\gamma_i>$
on $\Omega$ by a foliation $\F^3_i$ on $V:=T^2\times[0,1]$~.
  So,  $\Gamma_i$ is also realized by the pullback $\F^4_i$
  of $\F^3_i$ in $M_i:=\partial(V\times\D^2)
  \cong T^2\times\S^2$~. The foliation $\F^4_i$ contains a ``core'' $I$-bundle
  $B_i\cong T^3\times[a,b]$.
 For each $i=3,\dots,r$,  in $B_2$ (resp. $B_i$), we pick some embedded annulus $A_i:=C_ i\times[a,b]\subset B_2$
 (resp. $A'_i:=C'_ i\times[a,b]\subset B_i$), where $C_i$ (resp. $C'_i$) is a circle embedded in $T^3$ and
  parallel to the first circle factor. The foliation $\F^4_2\vert A_i$ (resp.  $\F^4_i\vert A'_i$) is the suspension of $\gamma_1\vert[a,b]$~;
 the holonomy of $\F^4_2$ (resp. $\F^4_i$) along $C_i\times a$
  (resp. $C'_i\times a$) is the germ of $\gamma_1$
  at $a$~; the holonomy of $\F^4_2$ (resp. $\F^4_i$) along $C_i\times b$
  (resp. $C'_i\times b$) is the germ of $\gamma_1$
  at $b$~.
 We arrange that $C_3$, \dots, $C_r$ are two by two disjoint in $T^3$~.
 The annulus $A_i$ (resp. $A'_i$) has in $M_2$ (resp. $M_i$) a small tubular neighborhood $N_i$ (resp. $N'_i$) $\cong\D^3\times\S^1$~, on
  the boundary of which $\F^4_2$ (resp. $\F^4_i$) traces a foliation composed of an $I$-bundle and two Reeb components,
  realizing $(<\gamma_1>,\Omega)$~.
  We cut from $M_2$~, \dots, $M_r$ the interiors of $N_3$~, \dots, $N_r$~,
  $N'_3$~, \dots, $N'_r$~. We paste every $\partial N_i$ with $\partial N'_i$~,
  such that $\F^4_2\vert\partial N_i$ matches $\F^4_i\vert\partial N'_i$~. We get
  a closed $4$-manifold with a foliation realizing $(\Gamma,\Omega)$~.
\end{proof}
   
\begin{pro} Let $(G,T)$ be a compactly generated
pseudo-group of dimension $1$, in which every essential orbit is commutative.

Then, after a Haefliger equivalence, $T$ splits as a disjoint union
$$T=T_0\sqcup\Omega_1\sqcup\dots\sqcup\Omega_\ell$$
such that
\begin{enumerate}\label{splitting_pro}
\item $T_0$ is a finite disjoint union of circles and compact intervals;
\item Each $\Omega_k$ ($1\le k\le\ell$) is the domain of a hinge $\Gamma_k\subset
G$ whose rank is at most the essential rank of $(G,T)$;
\item Each core $[a_k,b_k]\subset\Omega_k$ is $G$-saturated;
\item For every $t\in\Omega_k\setminus[a_k,b_k]$
 ($1\le k\le\ell$),
 the orbit $G(t)$
meets $T_0$.
\end{enumerate}
\end{pro}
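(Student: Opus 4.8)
The plan is to isolate the finitely many \emph{essential pieces} of $(G,T)$, thicken each into a hinge, and recognize the taut remainder by means of Proposition \ref{taut_pro}. First I would reduce to the connected case, since every pseudo-group splits as a disjoint sum of connected ones and the conclusion is stated component by component; if $(G,T)$ happens to be an $\S^1$-bundle it is taut and we may take $\ell=0$ with $T_0$ a circle. Recall that the union $E$ of the essential orbits is topologically closed in $T$, by Lemma \ref{closed_orbits_lem} together with the remark that, inside the closed orbits, the essential ones form a clopen set. The first task is \emph{finiteness}: I claim $E$ is a disjoint union of finitely many maximal essential $I$-bundles (nondegenerate cores $[a_k,b_k]$ with $a_k<b_k$) and finitely many isolated essential orbits (degenerate cores $a_k=b_k$). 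To obtain this I would realize $(G,T)$ unconditionally as a Morse foliation on a compact manifold, exactly as in the first step of Section \ref{A_section} and as already used in the proof of Lemma \ref{closed_orbits_lem}; the essential orbits then correspond to the leaves bounding dead end components, of which a compact foliated manifold has only finitely many by Goodman--Novikov theory. By Lemma \ref{accumulation_lem}, an essential orbit whose isotropy has infinitely many common fixed points accumulating on it bounds an $I$-bundle, so each essential piece is indeed either a maximal such $I$-bundle or an isolated orbit.

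Next I would build, around each essential piece, a hinge. Fix a piece with core $K=[a_k,b_k]$ (possibly a point). Its boundary orbits are essential, hence commutative by hypothesis, and by Lemma \ref{accumulation_lem} the $I$-bundle $K$ is generated by finitely many commuting $G$-extendable elements $h_1,\dots,h_r$ whose germs generate the boundary isotropy; in particular $r$ is at most the essential rank. I would take $\Omega_k$ to be a small neighborhood of $K$ in $T$ and select generators of $G\vert\Omega_k$ by the bookkeeping of Lemmas \ref{accumulation_lem} and \ref{complement_lem}: a finite extendable generating system is pruned so that the generators meeting $K$ are exactly $h_1,\dots,h_r$, with domains and images intervals containing $K$, while the remaining generators avoid a neighborhood of $K$ and are discarded after shrinking $\Omega_k$. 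Commutativity of $h_1,\dots,h_r$ near $K$ supplies clause (2) of Definition \ref{thick_dfn}; the dead-end character of $K$ — that on the exterior side every orbit accumulates on $K$ — supplies clause (3), that every neighborhood of the core meets every orbit of $\Gamma_k:=\langle h_1,\dots,h_r\rangle$. Thus $\Omega_k$ is a hinge of rank at most the essential rank, with $G$-saturated core, giving clauses (2) and (3) of the proposition.

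It remains to treat the complement and to package everything as a single Haefliger equivalence. Removing the isolated essential pieces one at a time via Lemma \ref{complement_lem} preserves compact generation, and the resulting pseudo-group on the complement has no essential orbit, hence is taut; being also cocompact (compact generation implies cocompactness), Proposition \ref{taut_pro} makes it Haefliger-equivalent to a finite disjoint union $T_0$ of compact intervals and circles, which is clause (1). Splicing this equivalence of the taut part with the hinge transversals $\Omega_k$ produces one equivalent pseudo-group on $T_0\sqcup\Omega_1\sqcup\dots\sqcup\Omega_\ell$. Finally, for $t\in\Omega_k\setminus K$ the orbit $G(t)$ is not essential — it lies on the exterior, taut side of the dead end — so it meets a positive loop or a positive chain with endpoints on $\partial T$, and therefore meets $T_0$; this is clause (4).

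The hard part will be the hinge construction of the middle paragraph: coordinating the \emph{local} isotropy generators at the boundary orbits (which must become $h_1,\dots,h_r$ and furnish both commutativity and the rank bound) with the \emph{global} extendable generators provided by compact generation (which must be pruned to honour clause (3) and to leave a genuinely taut complement), all while arranging that the cores are exactly $G$-saturated and that the construction descends to a single disjoint-union transversal. This is in essence a simultaneous, uniform version of the arguments already carried out in Lemmas \ref{accumulation_lem} and \ref{complement_lem}, and verifying clause (3) — the attracting property of each core — is precisely where the hypothesis that the essential orbits are commutative enters decisively.
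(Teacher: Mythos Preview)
Your overall architecture matches the paper's: isolate the essential part into hinges, apply Lemma \ref{complement_lem} and Proposition \ref{taut_pro} to the taut remainder, then splice. But the hinge construction in your middle paragraph has a real gap, and it is precisely where the paper invests its effort.

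You attempt to build each hinge as a \emph{subpseudo-group} of $(G,T)$: take $\Omega_k$ a small neighborhood of the core $K=[a_k,b_k]$ in $T$, and find $h_1,\dots,h_r\in G$ whose domains all contain $K$, which commute there, and whose germs at $a_k$ and at $b_k$ generate the boundary isotropies. There is no reason such elements should exist. Lemma \ref{accumulation_lem}, which you invoke, produces generators near \emph{one} endpoint only; it says nothing about extending them across the whole core to the other endpoint, and it does not assert that they commute. More seriously, the exterior part of $G_{a_k}$ (the germs acting on the left of $a_k$) need not be the restriction of any element of $G$ defined on all of $[a_k,b_k]$, and likewise for $G_{b_k}$ on the right. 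The paper circumvents this by introducing the notion of \emph{extension} (Definition \ref{extension_dfn}) and proving the key Lemma \ref{local_hinge_extension_lem}: for each essential point one builds a faithful hinge \emph{extension} of $(G,T)$, not a subpseudo-group. In the delicate case of a rank-$0$ $I$-bundle component (the second case of that lemma), the hinge lives on a fresh copy of $\R$ adjoined to $T$, and the generators $h_i$ are manufactured there by gluing the exterior germs at the two ends across the identity on $[0,1]$. Then a compactness argument on a compact $K$ meeting every essential orbit --- rather than your appeal to Goodman--Novikov theory on a Morse-singular realization, which is at best heuristic for singular foliations --- yields finitely many such hinge extensions whose cores cover the essential set. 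Your sketch would need exactly this extension mechanism, and the local Lemma \ref{local_hinge_extension_lem}, to go through.
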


We begin to prove proposition \ref{splitting_pro}.

    By a \emph{subpseudo-group} in $(G,T)$~,
  we mean a pseudo-group $(\Gamma,\Omega)$ such that $\Omega\subset T$ is topologically open,
  and that $\Gamma\subset G\vert\Omega$~.
 
\begin{dfn}\label{faithful_dfn}
   Let $(\Gamma,\Omega)\subset(G,T)$ be a hinge subpseudo-group.
   Let $[a,b]\subset\Omega$ be its core.
   
   (a) Assume that
    $(\Gamma,\Omega)$ is
   degenerate $(a=b)$. We call the hinge subpseudo-group
  \emph{faithful} if $G(a)$ is closed in $T$ and if $\Gamma_{a}=G_a$ (isotropy groups).

   (b) Assume that
    $(\Gamma,\Omega)$ is
   nondegenerate $(a\neq b)$. We call the hinge subpseudo-group
  \emph{faithful}
   if the $G$-saturation of $[a,b]$ is a component of $(G,T)$,
   if $\Gamma\vert[a,b]=G\vert[a,b]$,
   if $\Gamma_a=G_a$, and if $\Gamma_b=G_b$.
  \end{dfn}
  
  In case (b), 
   the $G$-saturation of $[a,b]$ is necessarily an $I$-bundle component of $G$.

  The notion of subpseudo-group is \emph{not} functorial with respect
   to the Haefliger equivalences. The following notion solves this difficulty.
   
\begin{dfn}\label{extension_dfn}   Given two pseudo-groups $(G,T)$ and $(\Gamma,\Omega)$, an \emph{extension}
   of $(G,T)$ by $(\Gamma,\Omega)$ is a pseudo-group $\bar G$ on the disjoint
   union $\bar T:=T\sqcup\Omega$ such that
   \begin{itemize}
   \item $T$ is exhaustive for $\bar G$;
   \item $G=\bar G\vert T$;
   \item $\Gamma\subset\bar G$.
   \end{itemize}
   \end{dfn}
   In particular, $(\bar G,\bar T)$ is Haefliger-equivalent to $(G,T)$, and  $(\Gamma,\Omega)$ is a subpseudo-group of $(\bar G,\bar T)$.
   
   For example, given two pseudo-groups $(G,T)$, $(\Gamma,\Omega)$
   and given a Haefliger equivalence $(\bar\Gamma,\Omega\sqcup \Omega_0)$
   between $(\Gamma,\Omega)$ and some subpseudo-group $(\Gamma_0,\Omega_0)\subset (G,T)$,
   one has a natural extension of  $(G,T)$ by $(\Gamma,\Omega)$: namely,
   $\bar G$ is the pseudo-group on $T\sqcup\Omega$ generated by
   $G\cup\bar\Gamma$.

   An extension $(\bar G,\bar T)$ of a pseudo-group $(G,T)$ by a hinge
   $(\Gamma,\Omega)$
   is called \emph{faithful} if $(\Gamma,\Omega)\subset(\bar G,\bar T)$ is faithful;
   \emph{essential} if its core is an essential orbit or
    an essential $I$-bundle in $(\bar G,\bar T)$.

  \begin{lem}\label{local_hinge_extension_lem}
  For every $t\in T$ such that $G(t)$ is essential,
  there is an essential faithful extension $(\bar G,\bar T)$ of $(G,T)$ by a hinge $(\Gamma,\Omega)$ s.t:
  \begin{itemize}
  \item The rank of the hinge $(\Gamma,\Omega)$ is at most the essential rank of $(G,T)$; 
  \item The core of $(\Gamma,\Omega)$
 meets $\bar G(t)$;
  \item The core of $(\Gamma,\Omega)$
 meets also every essential orbit of $\bar G$ close enough to
 $\bar G(t)$.
 \end{itemize}
\end{lem}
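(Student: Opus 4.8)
The plan is to read off the hinge from the isotropy data of the essential orbit $G(t)$ and to glue it to $(G,T)$ by the extension construction of the example following Definition \ref{extension_dfn}. Since $G(t)$ is essential it is closed, and by hypothesis $G_t$ is commutative; let $r$ be its rank, so that $r$ is at most the essential rank of $(G,T)$. I would choose germs generating $G_t$ and, using compact generation and lemma \ref{intervals_lem}, represent them by finitely many $G$-extendable elements $h_1,\dots,h_r\in G$ whose domains and images are intervals containing $t$. As the germs of the $h_i$ at $t$ commute, finitely many germ identities $h_ih_j=h_jh_i$, $h_i\mun h_j=h_jh_i\mun$, $h_i\mun h_j\mun=h_j\mun h_i\mun$ hold simultaneously on a neighborhood of $t$; after restricting all domains to a small enough interval these become identities of maps, so the local pseudo-group generated by the restricted $h_i$ already satisfies conditions (1) and (2) of Definition \ref{thick_dfn}.

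The heart of the argument is to locate the core and to secure condition (3), and here I would split into two cases according to the common fixed points of the $h_i$ near $t$. If the $h_i$ have no common fixed point in a punctured neighborhood of $t$, I take the core to be $\{t\}$ and build a \emph{degenerate} hinge: for $s$ near $t$, the closure of the $\Gamma$-orbit of $s$ is a compact invariant set whose infimum (resp.\ supremum) on the relevant side is fixed by every $h_i$, hence is a common fixed point, hence equals $t$; thus every orbit accumulates on $t$. If instead common fixed points of the $h_i$ accumulate on $t$ from at least one side, I invoke lemma \ref{accumulation_lem}: the $G$-saturation of a short interval $[t,t_n]$ is an $I$-bundle component generated by the $h_i$, and I take the core $[a,b]$ to be the maximal \emph{essential} $I$-bundle through $G(t)$. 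The decisive point is that lemma \ref{accumulation_lem} delivers this $I$-bundle as $\langle h_i|[a,b]\rangle$, which simultaneously yields that the same $r$ generators capture the isotropy at \emph{both} ends and that, after shrinking $\Omega$ to exclude the common fixed points lying outside $[a,b]$, every exterior orbit accumulates on an endpoint; that exterior common fixed points do not accumulate on $a$ or $b$ is exactly the maximality of the essential $I$-bundle, combined with the fact that the essential orbits are open-and-closed among the closed orbits (by lemma \ref{accumulation_lem}).

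With the core fixed, I form the abstract hinge $(\Gamma,\Omega)$ as a disjoint copy of this local pseudo-group, let $\bar\Gamma$ be the Haefliger equivalence on $\Omega\sqcup\Omega_0$ induced by the identification $\Omega\to\Omega_0$ with its image in $T$, and set $\bar G:=\langle G\cup\bar\Gamma\rangle$ on $\bar T:=T\sqcup\Omega$; this is an extension of $(G,T)$ by $(\Gamma,\Omega)$ exactly as in the example after Definition \ref{extension_dfn}. Faithfulness is checked through this equivalence. In the degenerate case $\bar G(a)$ is closed, since closed orbits correspond under a Haefliger equivalence, and $\Gamma_a=\bar G_a$ (both identified with $G_t$) because the $h_i$ generate $G_t$. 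In the nondegenerate case the $\bar G$-saturation of $[a,b]$ is a component, $\Gamma|[a,b]=\bar G|[a,b]$ is the $I$-bundle, and $\Gamma_a=\bar G_a$, $\Gamma_b=\bar G_b$, again because the $h_i$ generate the two isotropy groups. The extension is essential because being an essential orbit or an essential $I$-bundle is expressed through positive loops and chains, which are invariant under Haefliger equivalence, and $G(t)$ is essential by hypothesis.

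Finally I would read off the three stated properties. The rank of $(\Gamma,\Omega)$ is at most $r$, hence at most the essential rank, by the choice of generators; the core meets $\bar G(t)$ since the point corresponding to $t$ lies in $[a,b]$ by construction; and for the last property I again use that the essential orbits are open-and-closed in the closed orbits. In the degenerate case $G(t)$ is then isolated among essential orbits, so the only essential orbit near $\bar G(t)$ is $\bar G(t)$ itself, which the core meets; in the nondegenerate case the essential orbits near $\bar G(t)$ are precisely those composing the essential $I$-bundle $[a,b]$, each of which meets the core. I expect the main obstacle to be the joint control, in the nondegenerate case, of condition (3) and of faithfulness at the \emph{far} end $b$ of the core: one must know that the \emph{same} $r$ generators both force accumulation on the core and generate the full isotropy at $b$, which is precisely the information supplied by lemma \ref{accumulation_lem}.
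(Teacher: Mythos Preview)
Your case split hides a genuine gap. You fix $r=\rank(G_t)$ and choose $h_1,\dots,h_r$ generating $G_t$, and in the nondegenerate case you then take the core $[a,b]$ to be the \emph{maximal essential $I$-bundle} through $G(t)$ and claim that lemma~\ref{accumulation_lem} exhibits it as $\langle h_i\vert[a,b]\rangle$ and, in particular, that the $[h_i]_a$, $[h_i]_b$ generate $G_a$, $G_b$. Neither assertion is what the lemma gives. Lemma~\ref{accumulation_lem} is applied \emph{at $t$}, and it only controls $G\vert[t,t_n]$ for $t_n$ close to $t$; it says nothing about a possibly much larger $[a,b]$ (which need not even lie inside your small $\Omega$), and it only controls the isotropy at $t_n$ on the side facing $t$, not the germ on the exterior side, which is exactly what faithfulness ($\Gamma_b=G_b$) demands.

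The failure is most visible when $G_t$ is trivial, i.e.\ when $G(t)$ lies inside an $I$-bundle component of rank~$0$. Then $r=0$, every point is vacuously a common fixed point of your empty family, your $\Gamma$ is the trivial pseudo-group, and faithfulness at the endpoints $a,b$ is hopeless: their isotropy groups are typically nontrivial (that is precisely what makes $[a,b]$ maximal). This is not a corner case one can absorb into the previous argument; the paper isolates it as a separate ``second case'' and treats it by a genuinely different construction. One first realizes the rank-$0$ $I$-bundle via an \'etale map $f:C\to[0,1]$, extends $f$ across the boundary orbits to build an extension of $(G,T)$ over $\R$, and only then chooses generators $h_1,\dots,h_r$ with $r=\max(\rank(G'_0),\rank(G'_1))$ tailored to the isotropy at the \emph{endpoints} of the $I$-bundle, not at $t$. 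In the complementary ``first case'' the paper keeps $r=\rank(G_t)$ but does \emph{not} reach for the maximal essential $I$-bundle: the core is taken to be the interval between the extreme common fixed points of the $h_i$ inside a small $\Omega$, so that properties~(1)--(3) of definition~\ref{thick_dfn} and faithfulness can be checked locally. Your plan would be repaired by adopting this two-case architecture; as written, the appeal to lemma~\ref{accumulation_lem} does not deliver faithfulness at the far boundary of the core.
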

 \begin{proof} 
 First case: $G(t)$  is not contained in any
 $I$-bundle component of $(G,T)$ of rank $0$.
 
   In this case, we shall actually find a
   faithful hinge subpseudo-group in $(G,T)$ whose core
   meets $G(t)$ and every neighboring essential orbit.
   
 Let $r:=\rank(G_t)$ and choose $h_1$, \dots, $h_r\in G$
 such that their germs at $t$
 are a basis of $G_t$. Let $\Omega$ be a small interval containing $t$,
 topologically open in $T$, and contained in
 the intersection of the domains and of the images of  $h_1$, \dots, $h_r$.
 Put  $\gamma_i:=h_i\vert(\Omega\cap h_i\mun(\Omega))$ ($i=1,\dots, r$)
 and $\Gamma:=<\gamma_1,\dots,\gamma_r>$.
 For $\Omega$ small enough,  the properties (1) and (2) of definition \ref{thick_dfn} are fulfilled
for every small enough, $\Gamma$-invariant interval $[a,b]$~.

 First subcase: $G(t)$ is isolated (recall the vocabulary that follows lemma \ref{closed_orbits_lem} above).
 Put $a=b:=t$.
 For $\Omega$ small enough, by lemma 
 \ref{accumulation_lem}, $h_1,\dots, h_r$ have no common fix point in $\Omega$.
In consequence, for every $t'\in\Omega\setminus\{t\}$, there is an $i$ for which
one of the four following properties holds:
$t'\in Dom(\gamma_i)$ and $t<\gamma_i(t')<t'$, or $t'\in Dom(\gamma_i\mun)$ and  $t<\gamma_i\mun(t')<t'$, or $t'\in Dom(\gamma_i)$ and $t'<\gamma_i(t')<t$, or  $t'\in Dom(\gamma_i\mun)$ and $t'<\gamma_i\mun(t')<t$. The property (3) of definition \ref{thick_dfn}
follows.
 
Second subcase: $G(t)$ is not isolated from either side. In that subcase,
by lemma \ref{accumulation_lem},
 we can
shorten $\Omega$ to arrange that moreover none of the endpoints of
$\Omega$ is a fix point common to $h_1$, \dots, $h_r$.
Let $a$ and $b$ be the smallest and the largest fix points common to
 $h_1$, \dots, $h_r$
 in $\Omega$.
Then, $a<t<b$. For every $t'\in\Omega\setminus[a,b]$, there is an $i$ for which
 one of the four following properties holds:
$t'\in Dom(\gamma_i)$ and $b<\gamma_i(t')<t'$, or  $t'\in Dom(\gamma_i\mun)$ and $b<\gamma_i\mun(t')<t'$, or $t'\in Dom(\gamma_i)$ and $t'<\gamma_i(t')<a$, or  $t'\in Dom(\gamma_i\mun)$ and $t'<\gamma_i\mun(t')<a$. The property (3) of definition \ref{thick_dfn}
follows.

Third subcase: $G(t)$ is isolated from exactly one side. The argument is similar to the first two subcases.

Second case: $G(t)$ is contained in an $I$-bundle component $C\subset T$ of rank $0$.
That is, $C$ is a $1$-manifold, topologically closed in $T$,
 and $G\vert C$ is Haefliger-equivalent to the trivial pseudo-group
 on the interval $[0,1]\subset\R$. In other words, one has an orientation-preserving
 map $f:C\to[0,1]$ which is etale (that is, a local diffeomorphism); and the Haefliger equivalence is nothing but the pseudo-group
 on the disjoint union $C\sqcup[0,1]$
generated by the set of the local sections of $f$.
The boundary $\partial C$ is made of of two orbits
 $\partial_-C=G(t_0)$ and $\partial_+C=G(t_1)$.
 We can assume that $C$ is
 maximal among the $I$-bundle components of rank $0$.
  Assume also, to fix ideas, that $C$ is
interior to $T$ (the other cases being alike and simpler).
 Thus, the isotropy group of $G$ at $t_0$ (resp. $t_1$)
is nontrivial on the left (resp. right).

Pick
some small open interval $(u_0,v_0)\subset T$ containing $t_0$
and whose intersection with $C$ is $[t_0,v_0)$; and
 pick
some small open interval $(u_1,v_1)\subset T$ containing $t_1$
and whose intersection with $C$ is $(u_1,t_1]$. Take the intervals so small that
$f(v_0)<f(u_1)$.  One extends $f\vert[t_0,v_0)$ into a diffeomorphism $f_0$
from the interval $(u_0,v_0)$
onto the interval $(-\infty,f(v_0))$. The choice among the extensions is arbitrary.  Similarly, one extends $f\vert(u_1,t_1]$ into a diffeomorphism $f_1$ from the interval $(u_1,v_1)$
onto the interval $(f(u_1),+\infty)$. Let $T'$ be the disjoint union $T\sqcup\R$. Let $G'$ be the pseudo-group on $T'$ generated by
$G$, $f$, $f_0$, and $f_1$. Obviously, $T$ is exhaustive in $(G',T')$,
and $G=G'\vert T$, and $G'\vert[0,1]$ is the trivial pseudo-group on $[0,1]$, and
the orbit $G'(t)$ meets $[0,1]$ at $f(t)$.
 Let $r:=\max(\rank(G'_0),\rank(G'_1))$. One immediately
  makes $h_1$, \dots, $h_r\in G'\vert\R$ whose domains and images
 contain $[0,1]$, which are the identity on $(0,1)$, whose germs at $0$ generate
 $G'_0$, and whose germs at $1$ generate $G'_1$.
 Let $\Omega\subset\R$ be an open interval containing $[0,1]$, and contained in
 the intersection of the domains and of the images of  $h_1$, \dots, $h_r$.
  For $\Omega$ small enough,  the property (2) of definition \ref{thick_dfn} is fulfilled.
 By lemma \ref{accumulation_lem},
 we can moreover
shorten $\Omega$ to arrange that
 none of its endpoints is a fix point common to $h_1$, \dots, $h_r$.
 Put  $\gamma_i:=h_i\vert(\Omega\cap h_i\mun(\Omega))$ ($i=1,\dots, r$)
 and $\Gamma:=<\gamma_1,\dots,\gamma_r>$.
Let $a$ and $b$ be the smallest and the largest fix points common to
 $h_1$, \dots, $h_r$
 in $\Omega$. The property (3) of definition \ref{thick_dfn}
is fulfilled. The pseudo-group $(G'\vert(T\sqcup\Omega),T\sqcup\Omega)$
is a faithful extension of $(G,T)$ by the hinge $(\Gamma,\Omega)$.
 \end{proof}

 \begin{proof}[Proof of proposition \ref{splitting_pro}] The pseudo-group $(G,T)$ being cocompact, and the union of the essential leaves being topologically closed in $T$, one has a compact $K\subset T$
 whose $G$-saturation coincides with this union. By lemma \ref{local_hinge_extension_lem}, every point of $K$ has a
 neighborhood in $K$ whose orbits meet the core of the hinge after one essential, faithful hinge
 extension, whose rank is at most the essential rank of $(G,T)$. One extracts a finite subcover. There corresponds a finite sequence of
essential faithful extensions by hinges $(\Gamma_k,\Omega_k)$ ($1\le k\le\ell$),
 whose ranks are at most the essential rank of $(G,T)$.
 Let $(\bar G,\bar T)$ be the resulting global extension of $(G,T)$; let $[a_k,b_k]$
be the core of $(\Gamma_k,\Omega_k)$; and let $C_k\subset \bar T$ be the $\bar G$-saturation of
 $[a_k,b_k]$.  It is easy to arrange that   $C_1$,\dots, $C_\ell$ are two by two disjoint. A closed orbit of $\bar G$ is contained in  $C_1\cup\dots\cup C_\ell$ iff it is essential.
 Consequently, the pseudo-group $(\bar G\vert U=G\vert U,U)$ is taut, where
 $$U:=
T\setminus((C_1\cup\dots\cup C_\ell)\cap T)$$ Also, the topological
closure $\bar U$ of $U$ in $T$ being a component of $(G,T)$, the restricted pseudo-group $(G\vert\bar U,\bar U)$ is compactly
generated. By lemma \ref{complement_lem}, $(G\vert U,U)$
is also compactly generated.  By proposition \ref{taut_pro},
$(G\vert U,U)$ is Haefliger-equivalent to some pseudo-group $(G_0,T_0)$
 on a finite
disjoint union $T_0$ of compact intervals and circles. By the example that follows the
definition \ref{extension_dfn} above, we get an extension $(\tilde G,\tilde T)$
of $(\bar G,\bar T)$ by $(G_0,T_0)$. One has $\tilde T=\bar T\sqcup T_0$. Let
$$\tilde T':=T_0\sqcup\Omega_1\dots\sqcup\Omega_\ell\subset\tilde T$$
By construction, $\tilde T'$ is exhaustive in $(\tilde G,
\tilde T)$. We change $(G,T)$ for $(\tilde G\vert\tilde T',\tilde T')$.
The properties of proposition \ref{splitting_pro} are fulfilled.
 \end{proof}

\subsection{End of the proofs of theorems B and C}

Let, as before, $(G,T)$ be a compactly generated pseudo-group
of dimension $1$, in which every essential orbit is commutative.
Our task is to realize $(G,T)$, in dimension $3$ if possible, and $4$ if not.

Without loss of generality, $(G,T)$ is under the form described by
 proposition \ref{splitting_pro}.
We shall first realize separately
$(G\vert T_0,T_0)$, 
$(\Gamma_1,\Omega_1)$, \dots, $(\Gamma_\ell,\Omega_\ell)$;
and then perform some surgeries along some loops in the realizations, transverse
to the foliations. It is convenient to begin with somewhat introducing these loops into
the pseudo-group.

For each $k$, if
 $a_k\notin\partial_-\Omega_k$
 (resp.  $b_k\notin\partial_+\Omega_k$), write $\Omega_k^-$
 (resp. $\Omega_k^+$) the connected component of
$\Omega_k\setminus[a_k,b_k]$ on the left of $a_k$ (resp. on the right of $b_k$).

\begin{lem}\label{interval_lem}
a)  In case $a_k\notin\partial_-\Omega_k$, there exist in $\Omega_k^-$
   two
points $a'_k<a''_k<a_k$, and $\phi_k\in\Gamma_k$, such that

 i) The interval $(a'_k,a''_k)$ is exhaustive for $\Gamma_k\vert\Omega_k^-$;

 ii) $[a'_k,a''_k]\subset Dom(\phi_k)\cap Im(\phi_k)$;
 
  iii) $\phi_k(t)>t$ for every $t\in[a'_k,a''_k]$;
  
  iv) $\phi_k(a'_k)<a''_k$.

b) Symmetrically,  in case $b_k\notin\partial_+\Omega_k$,
 there exist in $\Omega_k^+$
 two
points $b_k<b'_k<b''_k$, and $\psi_k\in\Gamma_k$, such that

 i) The interval $(b'_k,b''_k)$ is exhaustive for $\Gamma_k\vert\Omega_k^+$;

 ii)  $[b'_k,b''_k]\subset Dom(\psi_k)\cap Im(\psi_k)$;
 
  iii) $\psi_k(t)>t$ for every $t\in[b'_k,b''_k]$;
  
  iv) $\psi_k(b'_k)<b''_k$.
\end{lem}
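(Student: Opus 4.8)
The plan is to build $\phi_k$ from the fix-point-free element supplied by Lemma \ref{no_fix_point_lem}, to take for $(a'_k,a''_k)$ a short overlapping fundamental interval of $\phi_k$ pressed against $a_k$ from the left, and to read off exhaustiveness from property (3) of the hinge.

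First I would invoke Lemma \ref{no_fix_point_lem} to get a local transformation $\gamma\in\Gamma_k$ whose domain contains the core $[a_k,b_k]$ and which is fix-point free off the core. Because the core is $\Gamma_k$-invariant and $\Gamma_k$ is orientation-preserving, $\gamma$ fixes the endpoint $a_k$; replacing $\gamma$ by $\gamma\mun$ if necessary, I may assume $\gamma(t)>t$ on a left half-neighborhood of $a_k$, and I set $\phi_k:=\gamma$. Choosing $\delta>0$ small enough that $(a_k-\delta,a_k)\subset Dom(\phi_k)\cap Im(\phi_k)$, that $\phi_k$ has no fixed point there, and that $\phi_k(t)>t$ throughout, I would then pick $a''_k\in(a_k-\delta,a_k)$ and $a'_k\in(a_k-\delta,\phi_k\mun(a''_k))$. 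Then (ii) holds by the choice of $\delta$, (iii) holds on $[a'_k,a''_k]\subset(a_k-\delta,a_k)$, and (iv) holds because $a'_k<\phi_k\mun(a''_k)$ forces $\phi_k(a'_k)<a''_k$.

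The point of (iii)--(iv) is that the consecutive iterates $\phi_k^{n}\big((a'_k,a''_k)\big)=\big(\phi_k^{n}(a'_k),\phi_k^{n}(a''_k)\big)$ overlap and march monotonically rightward; since $\phi_k$ is fixed-point free on $(a'_k,a_k)$ and fixes $a_k$, the increasing sequence $\phi_k^{n}(a''_k)$ converges to $a_k$, whence
$$\bigcup_{n\ge0}\phi_k^{n}\big((a'_k,a''_k)\big)=(a'_k,a_k).$$
This identity reduces (i) to showing that every orbit of $\Gamma_k\vert\Omega_k^-$ already meets the left half-neighborhood $(a'_k,a_k)$: a representative $s$ there lies in some $\phi_k^{n}\big((a'_k,a''_k)\big)$, and the corresponding $\phi_k^{-n}$ (which stays inside $\Omega_k^-$ near $a_k$, hence belongs to $\Gamma_k\vert\Omega_k^-$) carries it back into $(a'_k,a''_k)$.

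For that last point --- which I expect to be the only real issue --- I would first observe that every $g\in\Gamma_k$ preserves the invariant core, hence fixes $a_k$, hence, being orientation-preserving, carries $Dom(g)\cap\Omega_k^-$ into $\Omega_k^-$. Consequently the $\Gamma_k$-orbit of any point of $\Omega_k^-$ stays inside $\Omega_k^-$ and, by restriction, coincides there with its $\Gamma_k\vert\Omega_k^-$-orbit. Now property (3) of Definition \ref{thick_dfn}, applied to the neighborhood $(a'_k,c)$ of the core with $b_k<c$ in $\Omega_k$, says this orbit meets $(a'_k,c)$; since the orbit lies in $\Omega_k^-$, it meets $(a'_k,c)\cap\Omega_k^-=(a'_k,a_k)$, and the displayed covering then places one of its points in $(a'_k,a''_k)$. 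This proves (i). Part (b) follows by the mirror construction, taking $\psi_k$ fix-point free to the right of $b_k$ with $\psi_k(t)>t$ and a fundamental interval $(b'_k,b''_k)$ pressed against $b_k$ from the right.
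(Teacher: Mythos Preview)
Your proof is correct. It differs from the paper's in the choice of $\phi_k$: the paper does not invoke Lemma~\ref{no_fix_point_lem} at all. Instead, it first picks $a'_k$ close to $a_k$ in the domain and image of every generator, sets $\phi_k:=\gamma_j^{\epsilon_j}$ to be whichever generator or inverse generator maximizes $\gamma_i^{\pm1}(a'_k)$, and then takes $a''_k$ just above $\phi_k(a'_k)$ and close enough to it that (iii) holds on $[a'_k,a''_k]$. In that approach $\phi_k$ may well have fixed points in $\Omega_k^-$, so your overlapping-iterates covering of $(a'_k,a_k)$ is unavailable; the exhaustiveness (i) rests instead on the observation that for any $s\in(\phi_k(a'_k),a_k)$ one has $\gamma_i^{\pm1}(s)>a'_k$ for every $i$ (by maximality of $\phi_k(a'_k)$), so iterating $s\mapsto\min_{i,\epsilon}\gamma_i^{\epsilon}(s)$ pushes the orbit down inside $(a'_k,a_k)$ until it enters $(a'_k,\phi_k(a'_k)]\subset(a'_k,a''_k)$, any putative limit above $\phi_k(a'_k)$ being a common fixed point excluded by property~(3). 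The paper declares (i) ``obvious'' and does not spell this out. Your route via the fix-point-free element trades that implicit descent for an explicit fundamental-domain covering; it costs an appeal to a previous lemma, but makes the verification of exhaustiveness entirely transparent.
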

\begin{proof}[Proof of a)] Recall $\gamma_1$, \dots, $\gamma_r$ of definition \ref{thick_dfn}.
Choose
  $a'_k<a_k$, so close to $a_k$ that it belongs to the domain and to the image
 of $\gamma_i$,
  for every $1\le i\le r$.
 Let $\gamma_j^{\epsilon_j}(a'_k)$
 be the maximum of the values $\gamma_i(a'_k)$, $\gamma_i\mun(a'_k)$ ($1\le i\le r$).
  Put $\phi_k:=\gamma_j^{\epsilon_j}$.
Choose $a''_k$ in the interval $(\gamma_j^{\epsilon_j}(a'_k),a_k)$, so close to $\gamma_j^{\epsilon_j}(a'_k)$ that iii) holds. The properties i), ii) and iv) are obvious.
\end{proof}

For each $k=1$, \dots, $\ell$, it follows from ii), iii) and iv) that,
 in case $a_k\notin\partial_-\Omega_k$ (resp. $b_k\notin\partial_+\Omega_k$),
  the subpseudo-group of $(\Gamma_k,\Omega_k)$
generated by $\phi_k\vert(a'_k,a''_k)$ (resp. $\psi_k\vert(b'_k,b''_k)$) is Haefliger-equivalent to the trivial pseudo-group
on the circle. In case $\partial\Omega_k=\emptyset$ (resp. $\{a_k\}$)
(resp. $\{b_k\}$) (resp. $\{a_k,b_k\}$), by the example following the definition
\ref{extension_dfn},
 we get an extension $(\hat\Gamma_k,\hat\Omega_k)$
 of the hinge  $(\Gamma_k,\Omega_k)$
by the trivial pseudo-group on the disjoint union of two circles $S_k^-\sqcup S_k^+$
(resp. one circle $S_k^+$) (resp. one circle $S_k^-$) (resp. $\emptyset$).

In other words, we have an extension $(\hat G,\hat T)$ of $(G,T)$ by the trivial
pseudo-group  on the disjoint union $S$ of all the $S_k^\pm$'s ($1\le k\le\ell$).
In particular, $\hat T=T\sqcup S$.
Write $\hat T_0:=T_0\sqcup S\subset\hat T$ and $\hat G_0:=\hat G\vert\hat T_0$.
Also write $$A:=[a_1,b_1]\cup \dots \cup [a_\ell,b_\ell]$$

 \begin{lem}\label{generation_lem}
  The pseudo-group $\hat G$ on $\hat T$ is generated by $\hat
  G_0$, $\hat\Gamma_1$, \dots, and $\hat\Gamma_\ell$.
 \end{lem}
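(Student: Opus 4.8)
The plan is to prove the two inclusions separately. Writing $H:=<\hat G_0\cup\hat\Gamma_1\cup\dots\cup\hat\Gamma_\ell>$, the inclusion $H\subset\hat G$ is immediate, each of $\hat G_0,\hat\Gamma_1,\dots,\hat\Gamma_\ell$ being a subpseudo-group of $\hat G$. For the reverse inclusion it suffices to check that every germ of $\hat G$ lies in $H$. By construction $\hat G$ is the extension of $(G,T)$ by the trivial pseudo-group on $S$, so (by the example following Definition \ref{extension_dfn}) $\hat G$ is generated by $G$ together with the family $E$ of extension local transformations $S_k^\pm\to\Omega_k^\pm$. Each element of $E$ is one of the equivalence data generating $\hat\Gamma_k$ over $\Gamma_k$, hence lies in $\hat\Gamma_k\subset H$. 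Therefore it is enough to show that every germ of $G$ lies in $H$. Throughout I shall use that $\hat G\vert T=G$, that $\hat G\vert\hat T_0=\hat G_0$, and that $T_0$, the $\Omega_k$'s and $S$ are unions of connected components of $\hat T$, so that I may freely restrict the domain and the image of a germ to a single such component.

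I would then classify a germ $[g]_t$ of $G$ according to the components of $T=T_0\sqcup\Omega_1\sqcup\dots\sqcup\Omega_\ell$ containing its source $t$ and its target $g(t)$. Several cases are routine. If both lie in $T_0$, then $[g]_t\in G\vert T_0\subset\hat G_0\subset H$. If $t$ lies in a core $[a_k,b_k]$, then the core being $G$-saturated (property (3) of \ref{splitting_pro}) so is $g(t)$, and by the faithfulness built into the construction (Definition \ref{faithful_dfn}: $G$ and $\Gamma_k$ agree on the core and on the boundary isotropy) the germ already belongs to $\hat\Gamma_k\subset H$; symmetrically no germ carries $T_0$ into a core. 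Finally, no germ can carry a flap point into a core: if $t\in\Omega_k\setminus[a_k,b_k]$ had $g(t)$ in some core, then $G(t)=G(g(t))$ would be contained in that saturated core and would miss $T_0$, contradicting property (4) of \ref{splitting_pro}.

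The essential case is a germ $[\theta]_t$ with $t$ in a flap, say $t\in\Omega_k^-$, and $\theta(t)\in T_0$; here I would route through the circle $S_k^-$. By \ref{interval_lem}(i) the interval $(a'_k,a''_k)$ is exhaustive for $\Gamma_k\vert\Omega_k^-$, so some element of $\Gamma_k$ carries $t$ into $(a'_k,a''_k)$, where a section of the suspension defining $S_k^-$ (an element of $E$) is defined; composing, I obtain $\sigma\in\hat\Gamma_k$ with $\sigma(t)\in S_k^-$. Now set $\tau:=\theta\sigma\mun$. It lies in $\hat G$ (as $\theta\in G$ and $\sigma\mun\in\hat\Gamma_k$), its source $\sigma(t)$ lies in $S\subset\hat T_0$, and its target $\theta(t)$ lies in $T_0\subset\hat T_0$; hence $\tau\in\hat G\vert\hat T_0=\hat G_0$. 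Thus $[\theta]_t=\tau\sigma$ with $\tau\in\hat G_0$ and $\sigma\in\hat\Gamma_k$, so $[\theta]_t\in H$. The symmetric statement ($T_0$ into a flap) follows by inversion, and a germ between two flaps is treated by first sending its target flap point into $T_0$ (property (4)) and composing two flap-to-$T_0$ germs.

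The only real obstacle is this flap-to-$T_0$ case, and more precisely the observation that the auxiliary circles $S_k^\pm$ convert the ``exterior'' holonomy of the hinge boundaries into genuine elements of $\hat G_0$: once a flap point is lifted to $S_k^\pm$, the connecting transformation automatically has \emph{both} endpoints in $\hat T_0=T_0\sqcup S$ and is therefore absorbed into $\hat G_0$. The remaining points — that restriction to components is legitimate, that $E\subset\bigcup_k\hat\Gamma_k$, and the behaviour of germs at the endpoints $a_k,b_k$ — are routine bookkeeping which I would dispatch briefly.
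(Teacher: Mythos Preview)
Your proof is correct and follows essentially the same strategy as the paper's: the heart of both arguments is routing a flap point $t\in\Omega_k\setminus[a_k,b_k]$ to the auxiliary circle $S_k^\pm$ by an element of $\hat\Gamma_k$ (using lemma~\ref{interval_lem}(i)), so that the remaining germ has both endpoints in $\hat T_0$ and hence lies in $\hat G_0$; the core case is then handled by $G$-saturation (proposition~\ref{splitting_pro}(3)) together with faithfulness. The only organisational difference is that you first reduce from germs of $\hat G$ to germs of $G$ (via $E\subset\bigcup_k\hat\Gamma_k$) and then do a source/target case analysis, whereas the paper works directly with $\hat G$ and reduces both endpoints to $\hat T_0\cup A$ in one stroke --- but this is a matter of bookkeeping, not of substance.
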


 \begin{proof}
 We have to verify that the germ $[g]_t$ of every $g\in\hat G$ at
 every $t\in Dom(g)$, is generated by $\hat G_0$ and the $\hat\Gamma_k$'s.

 If $t\in\Omega_k\setminus[a_k,b_k]$, then (lemma
 \ref{interval_lem}, i)) there is
 some $\gamma\in\Gamma_k$ such that $\gamma(t)\in (a'_k,a''_k)$ or 
 $\gamma(t)\in (b'_k,b''_k)$, and thus some $\hat\gamma\in\hat\Gamma_k$
 such that $\hat\gamma(t)\in S_k^\pm$.
 We are thus reduced to the case $t\in\hat T_0\cup A$. Symmetrically, one can assume also that $g(t)\in\hat T_0\cup A$.
 
 By proposition \ref{splitting_pro}, (3), either $t, g(t)\in\hat T_0$ (and thus $[g]_t\in
 \hat G_0$) or $t, g(t)\in [a_k,b_k]$ for some $1\le k\le\ell$. In that second case,
 the extension of $(G,T)$ by $(\Gamma_k,\Omega_k)$ being faithful,
 $g\in\Gamma_k$.
\end{proof}

 \begin{proof}[Proof of theorem B] We have to prove that (2) implies (1).
 Start from a pseudo-group $(\hat G,\hat T)$
 as in lemma \ref{generation_lem}, Haefliger-equivalent to $(G,T)$.
 
 On the one hand, the restriction of $\hat G$ to $\hat T\setminus Int(A)$, being a component
  of $(\hat G,\hat T)$,
 is compactly generated by lemma \ref{complement_lem}. Since $\hat T_0\subset \hat T\setminus Int(A)$
 is exhaustive, $(\hat G_0,\hat T_0)$ is compactly generated.
 This pseudo-group is also taut, $\hat T_0$ being a disjoint union of
 circles and compact intervals. By theorem A,  $(\hat G_0,\hat T_0)$ 
 is realized by a foliated compact $3$-manifold $(M_0,\F_0)$,
 without transverse boundary. More precisely, from the proof of theorem A,
 $\hat T_0$ is
 embedded into $M_0$ as an exhaustive transversal to $\F_0$,
 and $\hat G_0$ is the holonomy pseudo-group of
 $\F_0$ on $\hat T_0$. One takes off from $M_0$ a small open tubular
 neighborhood $N_0$ of $S$, such that $\F_0\vert\partial N_0$ is
 the trivial foliation by $2$-spheres.
 
 On the other hand, for each $k=1$, \dots, $\ell$, one realizes $(\Gamma_k,
 \Omega_k)$ by a foliation $\F_k$ on $M_k:=T^2\times[0,1]$ (lemma
 \ref{commutative_realization_lem}). Obviously, $\F_k$ admits transverse loops
 corresponding to $S_k^\pm$, in the sense that $\hat\Omega_k$
 embeds into $M_k$ as an exhaustive transversal to $\F_k$,
 and $\hat\Gamma_k$ is the holonomy pseudo-group of
 $\F_k$ on $\hat\Omega_k$.
  One takes off from $M_k$ a small open tubular
 neighborhood $N_k$ of $S_k^\pm$, such that $\F_k\vert\partial N_k$ is
 the trivial foliation by $2$-spheres.
 
 One pastes $\sqcup_{1\le k\le\ell}\partial N_k\cong\S^2\times S$ with $\partial N_0
 \cong\S^2\times S$,
 with respect to the identity of $S$. One gets a foliation $\F$ on $$M_0\cup_{\S^2\times S}(M_1
 \sqcup\dots\sqcup M_\ell)$$ whose holonomy on the exhaustive transversal
 $T$ coincides with $G$, by lemma \ref{generation_lem}.
 \end{proof}

  \begin{proof}[Proof of theorem C] The same as for (2) implies (1) in theorem B, 
  but instead of the foliated $3$-manifold $(M_0,\F_0)$, we use the foliated
  $4$-manifold $(M_0\times\S^1, pr_1^*(\F_0))$; and instead of $T^2\times[0,1]$,
  we use a $4$-dimensional realization of $(\Gamma_k,\Omega_k)$
  (lemma \ref{commutative_realization_lem}).
  \end{proof}

\bigskip
\noindent
Universit\'e de Bretagne Sud

\noindent
Universit\'e Europ\'eenne de Bretagne

\noindent
Laboratoire de Math\'ematiques de Bretagne Atlantique, UMR 6205

\noindent
Postal address: UBS, LMBA, B\^atiment Yves Coppens, Tohannic

\noindent B.P. 573

\noindent
F-56019 VANNES CEDEX, France

\noindent
Gael.Meigniez@univ-ubs.fr


\begin{thebibliography}{99}
\bibitem{can2000} A. Candel, L. Conlon, {\it Foliations I,} Amer. Math. Soc.
Graduate studies in Math. 23, Providence (2000).
\bibitem{ehr1954} C. Ehresmann, {\it Structures locales,} Annali di Mat.
vol. 36 (1) (1954), 133--142.
\bibitem{hae1962} A. Haefliger, {\it Vari\'et\'es feuillet\'ees}, 
 Annali della Scuola Normale Superiore di Pisa, Classe di Scienze 3,
 t. 16, no. 4 (1962), 367--397.
\bibitem{hae1984} A. Haefliger, {\it Groupo\"ide d'holonomie et classifiants}, in {\it Structure transverse des feuilletages,} Toulouse (1982); ed. J. Pradines, Ast\'erisque 116 (1984), 70--97.
\bibitem{hae1985} A. Haefliger, {\it Pseudogroups of local isometries}, in {\it Proc. Vth. Coll. in Differential Geometry,} Santiago de Compostela,
 ed. L.A. Cordero, Research Notes in Math. 131, Pitman (1985), 174--197.
\bibitem{hae1988} A. Haefliger, E. Salem, {\it Pseudogroupes d'holonomie des feuilletages riemanniens sur des variétés compactes $1$-connexes}, in {\it G\'eom\'etrie diff\'erentielle,} Paris (1986); Travaux en cours 33, Hermann, Paris (1988), 146--160.
\bibitem{hae2002} A. Haefliger, {\it Foliations and compactly
 generated pseudogroups}, in {\it Foliations: Geometry and Dynamics,} Warsaw (2000); ed. P. Walczak et al., World Scientific (2002), 275--295.
\bibitem{mar2014} D. Martinez Torres, A. del Pino, F. Presas, {\it Transverse geometry of foliations calibrated by non-degenerate
closed 2-forms}, preprint, Arxiv 1410.3043 (2014).
\bibitem{mei2010} G. Meigniez, {\it A compactly generated pseudogroup which is not realizable,} J. Math. Soc. Japan,
    vol. 62, no. 4 (2010), 1205-1218.
\bibitem{rai2009} N. Raimbaud, {\it Compact generation for topological groupoids,} 
in {\it Foliations 2012,} Lodz, ed. P. Walczak et al,
World Scientific, Singapore (2013), 139--162.
\bibitem{veb1931} O. Veblen, J.H.C. Whitehead, {\it A set of axioms for differential geometry,} Proc. Nat. Acad. Sci. 17 (1931), 551--561. 


\end{thebibliography}
\end{document}